\title{\vspace{-9ex}\centering \bf On the lumpability of tree-valued Markov chains}
\author{
Rodrigo B. Alves, Yuri F. Saporito \& Luiz M. Carvalho \\
School of Applied Mathematics, Getulio Vargas Foundation.
}
\newtheorem{theorem}{Theorem}[]
\newtheorem{lemma}{Lemma}[]
\newtheorem{proposition}{Proposition}[]
\newtheorem{definition}{Definition}[]
\newtheorem{corollary}{Corollary}[]
\newtheorem{remark}{Remark}[]
\DeclareMathOperator*{\argmin}{arg\,min}
\DeclareMathOperator*{\pr}{\operatorname{Pr}}
\newcommand{\TT}{\boldsymbol{T}}
\begin{document}
\maketitle

\begin{abstract}
Phylogenetic trees constitute an interesting class of objects for stochastic processes due to the non-standard nature of the space they inhabit.
In particular, many statistical applications require the construction of Markov processes on the space of trees, whose cardinality grows superexponentially with the number of leaves considered.
We investigate whether certain lower-dimensional projections of tree space preserve the Markov property in tree-valued Markov processes.
We study exact lumpability of tree shapes and $\varepsilon$-lumpability of clades, exploiting the combinatorial structure of the SPR graph to obtain bounds on the lumping error under the random walk and Metropolis-Hastings processes. 
Finally, we show how to use these results in empirical investigation, leveraging exact and $\varepsilon$-lumpability to improve Monte Carlo estimation of tree-related quantities.

Key-words: Lumpability; Markov chains; Metropolis-Hastings; phylogenetics; random walk; Subtree–prune–regraft. 
\end{abstract}

\tableofcontents
\newpage %% if removing ToC, remove this also
\section{Introduction}
\label{sec:intro}

Phylogenetic trees (phylogenies) are planar graphs that describe evolutionary relationships between biological entities, usually inferred on data from DNA or RNA molecular sequences.
External nodes represent sampled species, whilst internal nodes encode unobserved ancestors~\citep{Steel2014}.
Moreover, branch lengths encode information about the amount of divergence between nodes, sometimes measured in calendar units of time.
Phylogenies find significant applications in Ecology and Medicine and especially in Molecular Epidemiology~\citep{Dudas2017,Candido2020}.
Phylogenetic trees also have an inherent probabilistic structure, which makes them a compelling subject of study in their own right -- see for example~\cite{Aldous1996, Mossel2003, Blum2006, Dinh2017, diSanto2022}.

Bayesian estimation of phylogenetic trees from genomic data has predominantly relied on Markov chain Monte Carlo (MCMC) techniques to sample from the posterior distribution, which is intractable even in the simplest toy examples~\citep{Drummond2012,Ronquist2012}.
The vast, high-dimensional nature of tree space, combined with the lack of a canonical representation ~\citep{Billera2001, Gavryushkin2016, Whidden2017, Lueg2021} make this a particularly challenging problem.
Moreover, a crucial step to employing MCMC in practice is to be able to diagnose convergence and mixing problems~\citep{Cowles1996,Warren2017,Vehtari2021, Brusselmans2024}.
A potential strategy for implementing and examining MCMC in phylogenetic space is to identify lower-dimensional projections that preserve the Markov property -- either exactly or approximately.
Specific structures within phylogenetic trees, such as their shapes and clades (which can be thought of as subtrees encompassing particular taxa), are of particular interest in this context.
Interestingly, these projections are also directly interpretable: tree shapes give insight into population processes affecting a population's evolution, whilst clades can be used directly in testing evolutionary hypotheses by checking which clades are more likely conditional on the available data.

In practice, researchers commonly track both convergence and mixing of tree-valued MCMC by analysing clade (subtree) indicator variables.
Loosely speaking -- a formal definition is given later in the paper --, one constructs a binary\textit{clade indicator} $Y(c)$, which is $1$ when clade $c$ is in a tree $X$ and zero otherwise.
For assessing convergence, for instance, it is common to look at the frequencies of a given clade in multiple chains and assess whether they are sufficiently close (e.g. \textit{via} the so-called average standard deviation of split frequencies, ASDSF, \cite{Ronquist2012, Warren2017}).
Mixing can be estimated by looking at the estimated effective sample size (ESS, \cite{Vehtari2021}) for each clade indicator marginally. 
The interested reader is referred to  \cite{Magee2023} and Section 2.3 of \cite{Kelly2022} for more details on how lower-dimension projections may be used for diagnosing phylogenetic MCMC.

An important question is thus: if one has a Markov process $(X_k)_{k \geq 0}$ on the space of trees, is the induced process  $(Y_{0}(c))_{0 \geq 0} \in \{0, 1\}$ for each clade $c$ also Markov?
In other words, one might ask whether the tree-valued Markov process is \textit{lumpable} with respect to the clade partition.
If lumpability is not exact, it is then of interest to understand the \textit{lumping error},  which in a sense quantifies that departure from a Markov process.
Figure~\ref{fig:cladeautocorr} shows the projection onto two clades (\{t1, t2\} and \{t1, t2, t3\}) for a lazy Metropolis-Hastings (lMH) on the space of rooted trees with $n = 4$, $5$, $6$ and $7$ leaves -- see below for rigorous definitions.
We show the autocorrelation spectra of the clade indicators along with the curve for a fitted two-state Markov chain (red). 
The figure suggests that while the process in clade space looks mostly Markov, there can be significant departures -- see e.g. the bottom right panel.

\begin{figure}[!h]
    \centering
    \includegraphics[scale=0.5]{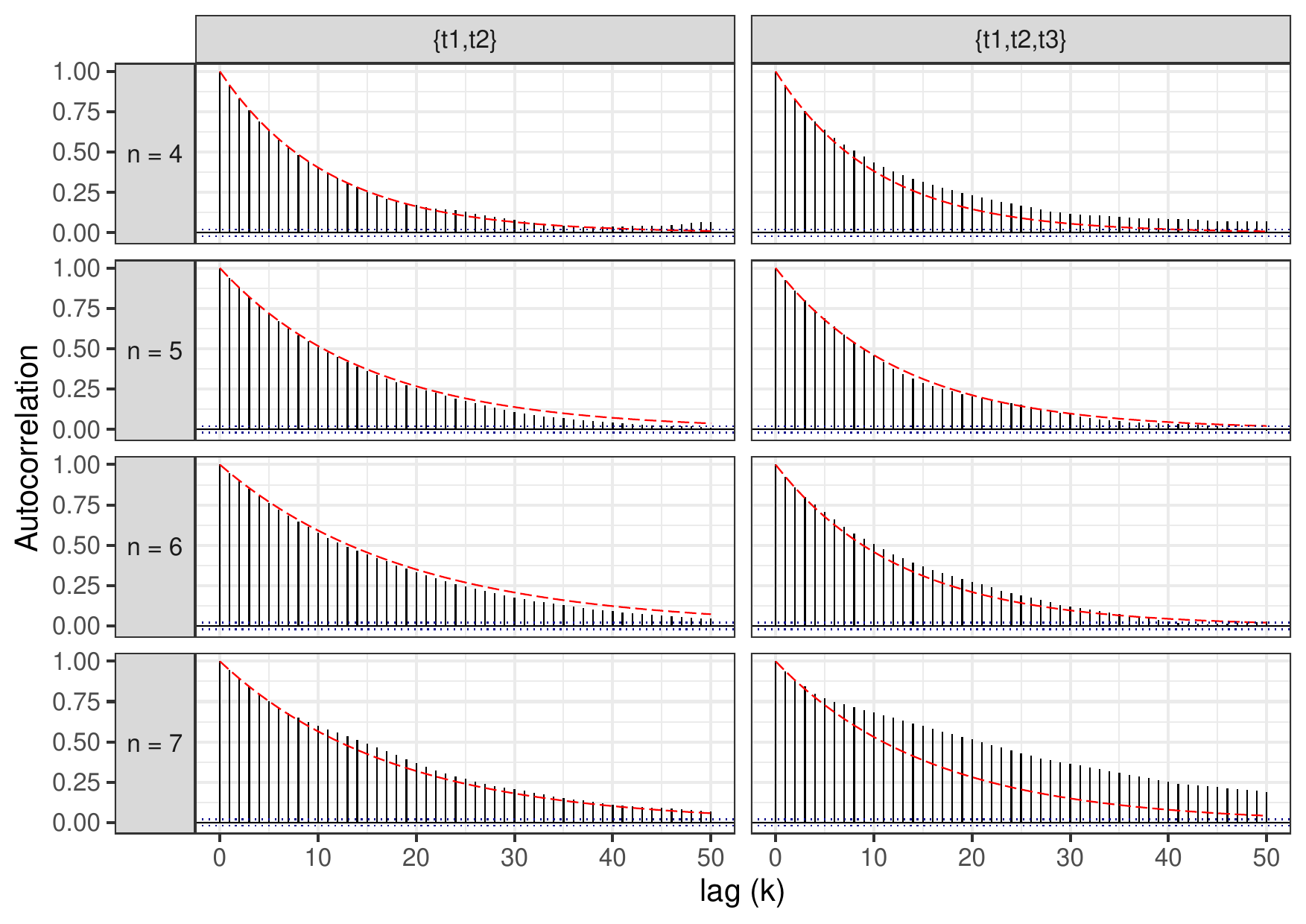}
    \caption{\textbf{Autocorrelation spectra of clade indicators for the lazy Metropolis-Hastings}.
    We show the empirical autocorrelation spectra up to lag $k=50$ (black bars) for indicators of clades \{t1, t2\} and \{t1, t2, t3\} when sampling from a lazy Metropolis-Hastings with $\rho = 0.9$ on a single realisation.
    The autocorrelation function of the best-fitting two-state Markov chain is also shown (red line).
    }
    \label{fig:cladeautocorr}
\end{figure}

These observations motivate a detailed study of the conditions for exact and approximate lumpability of tree-valued Markov processes.
In this paper, we study lumpability of two Markov processes, the lazy random walk and a Metropolis-Hastings process, with respect to tree shapes and clades.
We show that both processes are exactly lumpable with respect to tree shapes, but not for clades, in which case we give bounds on the lumping error.
With the first set of results in hand, we recover the stationary distribution of the original process by running a Markov chain on a smaller state space than the original.
With the second result, we use the lumping error to estimate the clade probability.
We finish with an empirical investigation using our theoretical results to better understand convergence and mixing in tree space.
The paper is organised as follows: in Section~\ref{sec:prelim} we give a detailed definition of the tree space studied in this paper along with a description of the Markov chains we consider.
A brief review of lumpability of Markov chains is also given.
We then move on study lumpability (Section~\ref{sec:tree_lumpability}) and $\varepsilon$-lumpability (Section~\ref{sec:eps_tree_lumpability}) of the two processes considered.
The empirical investigation drawing on previous results is detailed in Section~\ref{sec: Emp_Inv}.

% \subsection{Related work and main contributions}
% \label{sec:related_work}

% \begin{itemize}
%     \item Random walks phylo on graphs (e.g. \cite{Caceres2011,Caceres2012});
%     \item \cite{Simper2022}
%     \item 
% \end{itemize}

\subsection{Preliminaries}
\label{sec:prelim}

This section gives the necessary mathematical background on the space of rooted trees, tree shapes and clades.
The interested reader is referred to~\cite{Semple2003},~\cite{Steel2014} and~\cite{Whidden2017} for more in-depth information.
We also review exact and approximate lumpability for Markov chains.

First, we introduce some notation to be used throughout this work.
Let $(X_k)_{k \ge 0}$ denote an irreducible discrete time Markov Chain on a countable state space $\mathcal{S}$, characterised by transition probability $P$ and assuming that the stationary distribution $\pi$ exists. For any $x, y \in \mathcal{S}$, we write
\[
p(x,y) : = \operatorname{Pr}(X_{k+1} = y \mid X_{k} = x)\,.
\] 
For a set $A \in \mathcal{S}$ we write
\[
p(x, A) := \sum_{z \in A} p(x, z) \,.
\]

Letting $\mu$ and $\nu$ be two probability measures, we denote by $||\mu -\nu||$  the total variation distance between $\mu$ and $\nu$.

\paragraph{Rooted phylogenetic trees:}
% \comL{for Rodrigo to do: write out exactly what object we are gonna be studying (see intro in \cite{Simper2022}}
% \begin{itemize}
%     \item Rooted tree \com{foi};
%     \item rSPR graph \com{foi};
%     \item Clade \com{foi};
%     \item Average standard deviation of split frequencies (ASDSF) \comL{na hora do experimento}
% \end{itemize}
A labelled, rooted binary phylogenetic tree is a planar graph such that each vertex (or node) either has two children or none.
The terminal vertices, furthest from the root, are termed 'leaves' and are 1-degree vertices. A rooted binary phylogenetic tree with $n$ leaves will have $n$ labels. Each leaf uniquely corresponds to a label from the set $L:= \{1, 2, \dots, n\}$,  meaning the leaves of an $n$-labelled tree bijectively map to the set $L$.
All other vertices have degree 3, with a distinguished 2-degree vertex identified as the \textit{root}.
Any vertex that is not a leaf is termed an internal vertex (node) and we say that an internal vertex has a bigger depth if it is closer to the root.
Also, we refer to these vertices as parents and those directly below as descendants. %Suppose we have a set of leaves $C$, the \textit{least common ancestor (LCA)} of the set $C$ is the unique 3-degree vertex of each leaf in $C$ and at maximum depth.

Throughout this text, the term `tree', unless otherwise stated, refers to a labelled, rooted binary phylogenetic tree.
We denote by $\TT_n$ as the set of all trees with $n$ leaves.
This space is quite large and its cardinality~\citep{Schroder1870} 
\begin{equation*}
 |\TT_n| = (2n -3)!! = \frac{(2n-2)!}{2^{n-1}(n-1)!} \,,   
\end{equation*}
grows superexponentially on $n$ -- see Table~\ref{tab:cardinalities}. 
In a tree, we will assume time progresses from the root toward the leaves.
A \textit{path} between two vertices in a tree consists of an alternating sequence of vertices and edges.
Two vertices are \textit{adjacent} if they are connected by an edge.
For an internal vertex $w$ in a tree, we define $\gamma(w)$ as the number of degree-3 vertices along its path to the root.
We say a vertex $w$ in a tree $x \in \TT_n$ is a \textit{descendant} of a vertex $u \in x$ if there exists a path from $u$ to $w$ that moves strictly forward in time; in this case, $u$ is termed an \textit{ancestor} of $w$.   

A \textit{subtree} $z$ of a tree $x \in \TT_n$ is defined with the property that for every vertex $w \in x$ contained in $z$, all its descendants are also included in $z$. Then $z$ belongs to $\TT_m$, where $m \le n$. 
A clade, denoted as $c$, represents a specific type of subtree within a tree and can be represented as a subset of $L$.
This subset indicates the leaves of the subtree (or clade) $c$.
As an illustration, consider a tree $x \in \TT_6$ containing the clade 
$c:= \{1,2,5\}$.
This implies the existence of a subtree in $x$ with these specific leaves -- see Figure~\ref{fig:x_6_clade}.

\begin{figure}[h]
    \centering

\tikzset{every picture/.style={line width=0.75pt}} %set default line width to 0.75pt        

\begin{tikzpicture}[x=0.75pt,y=0.75pt,yscale=-1,xscale=1]
%uncomment if require: \path (0,300); %set diagram left start at 0, and has height of 300

%Straight Lines [id:da3669202328050458] 
\draw    (150,51.67) -- (241.67,139) ;
%Straight Lines [id:da13040012859208727] 
\draw    (150,51.67) -- (58.33,139.67) ;
%Flowchart: Connector [id:dp6445932574435929] 
\draw   (55.83,143.54) .. controls (55.83,141.4) and (56.95,139.67) .. (58.33,139.67) .. controls (59.71,139.67) and (60.83,141.4) .. (60.83,143.54) .. controls (60.83,145.68) and (59.71,147.42) .. (58.33,147.42) .. controls (56.95,147.42) and (55.83,145.68) .. (55.83,143.54) -- cycle ;
%Straight Lines [id:da05390420682508812] 
\draw    (90,110.67) -- (109,139.67) ;
%Straight Lines [id:da40282942040956304] 
\draw    (210.67,110.67) -- (194.33,141) ;
%Straight Lines [id:da8370443467511932] 
\draw    (110,91.67) -- (140.33,139.67) ;
%Straight Lines [id:da6680454796146094] 
\draw    (190.67,90.67) -- (163.67,139.67) ;
%Flowchart: Connector [id:dp6306771934980573] 
\draw   (106.5,143.54) .. controls (106.5,141.4) and (107.62,139.67) .. (109,139.67) .. controls (110.38,139.67) and (111.5,141.4) .. (111.5,143.54) .. controls (111.5,145.68) and (110.38,147.42) .. (109,147.42) .. controls (107.62,147.42) and (106.5,145.68) .. (106.5,143.54) -- cycle ;
%Flowchart: Connector [id:dp5121133879560611] 
\draw   (137.83,143.54) .. controls (137.83,141.4) and (138.95,139.67) .. (140.33,139.67) .. controls (141.71,139.67) and (142.83,141.4) .. (142.83,143.54) .. controls (142.83,145.68) and (141.71,147.42) .. (140.33,147.42) .. controls (138.95,147.42) and (137.83,145.68) .. (137.83,143.54) -- cycle ;
%Flowchart: Connector [id:dp39880692053674727] 
\draw   (161.17,143.54) .. controls (161.17,141.4) and (162.29,139.67) .. (163.67,139.67) .. controls (165.05,139.67) and (166.17,141.4) .. (166.17,143.54) .. controls (166.17,145.68) and (165.05,147.42) .. (163.67,147.42) .. controls (162.29,147.42) and (161.17,145.68) .. (161.17,143.54) -- cycle ;
%Flowchart: Connector [id:dp5345555445175385] 
\draw   (191.83,144.88) .. controls (191.83,142.73) and (192.95,141) .. (194.33,141) .. controls (195.71,141) and (196.83,142.73) .. (196.83,144.88) .. controls (196.83,147.02) and (195.71,148.75) .. (194.33,148.75) .. controls (192.95,148.75) and (191.83,147.02) .. (191.83,144.88) -- cycle ;
%Flowchart: Connector [id:dp4469057957500653] 
\draw   (239.17,142.88) .. controls (239.17,140.73) and (240.29,139) .. (241.67,139) .. controls (243.05,139) and (244.17,140.73) .. (244.17,142.88) .. controls (244.17,145.02) and (243.05,146.75) .. (241.67,146.75) .. controls (240.29,146.75) and (239.17,145.02) .. (239.17,142.88) -- cycle ;
%Shape: Rectangle [id:dp32105654427232677] 
\draw  [dash pattern={on 0.84pt off 2.51pt}] (51,87.67) -- (148.33,87.67) -- (148.33,161) -- (51,161) -- cycle ;

% Text Node
\draw (53.07,152.6) node [anchor=north west][inner sep=0.75pt]  [font=\tiny]  {$1$};
% Text Node
\draw (106,151.53) node [anchor=north west][inner sep=0.75pt]  [font=\tiny]  {$2$};
% Text Node
\draw (238.33,150.15) node [anchor=north west][inner sep=0.75pt]  [font=\tiny]  {$3$};
% Text Node
\draw (136.93,150.07) node [anchor=north west][inner sep=0.75pt]  [font=\tiny]  {$5$};
% Text Node
\draw (159.83,150.94) node [anchor=north west][inner sep=0.75pt]  [font=\tiny]  {$4$};
% Text Node
\draw (135.2,28.8) node [anchor=north west][inner sep=0.75pt]  [font=\scriptsize]  {$x$};
% Text Node
\draw (189.67,152.15) node [anchor=north west][inner sep=0.75pt]  [font=\tiny]  {$6$};
% Text Node
\draw (99.87,71.47) node [anchor=north west][inner sep=0.75pt]  [font=\scriptsize]  {$c$};

\end{tikzpicture}    
\caption{\textbf{A tree $x \in \TT_6$ and one of its clades.}
The clade $c$ as a subtree with leaves $\{1,2,5\}$ is shown in the dashed rectangle.}
    \label{fig:x_6_clade}
\end{figure}
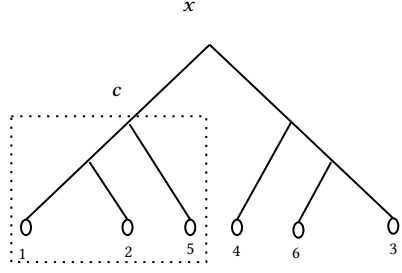

%\comL{It might be interesting to define a mapping $\mathcal{C}: \TT_n \to \boldsymbol{C}_n$, which gives all the clades of a given tree. Note that some care is needed in order to define the space of possible clades on $n$ leaves, $\boldsymbol{C}_n$.
%Notice that this is NOT the same as the map that takes a tree to $\{0, 1\}$ in order to reduce dimension, although the latter can be obtained from the former.}

We define $\boldsymbol{C}_n$ as the space comprising all possible clades with the set of labels $L$.
Let $\mathcal{C}: \TT_n \to \boldsymbol{C}_n$ denote a mapping such that, for any tree in $\TT_n$, it yields all the clades contained within that tree.
For example, let $x \in \TT_6$ be the tree define in the Figure~\ref{fig:x_6_clade}, then we obtain $\mathcal{C}(x) = \{\{1\}, \{2\}, \{3\}, \{4\}, \{5\}, \{6\}, \{1,2\}, \{3,6\}, \{1,2,5\}, \{3,4,6\}, \{1,\dots, 6\}\} $.

For $x \in \boldsymbol{T}_n$, we fix a clade $c \in \boldsymbol{C}_n$ and consider the map $s_c : \boldsymbol{T}_n \to \{0, 1\}$. Defined as the indicator function that determines if a tree contains the specific clade $c$. 
For all $c \in \boldsymbol{C}_n$, these two sets form a partition of $\TT_n$ divided into trees that do not contain the clade $c$ and those that do (Definition~\ref{def:clade_partition}).
% \begin{equation*}
%     \label{eq:cladeindicator_def}
%     s_c(x) = \begin{cases}
%     1, \: \text{if} \: c \in \mathcal{C}(x),\\
%     0, \: \text{otherwise}.
%     \end{cases}
% \end{equation*}
%Then we define the sets $S_0(c) : = \{y \in \TT_n : s_c(y) = 0 \}$ and $S_1 (c) := \{y \in \TT_n : s_c(y) = 1 \}$. 

\begin{definition}[\textbf{Clade partition of tree space}]
\label{def:clade_partition}
Let $\bar{S}_n(c) = \{S_0(c), S_1(c) \} $ be the partition of $\boldsymbol{T}_n$ induced by clade $c \in \boldsymbol{C}_n$, for which we will write $S_0(c) := \{y \in \boldsymbol{T}_n : s_c(y) = 0\}$ and $S_1(c)  := \{y \in \boldsymbol{T}
_n : s_c(y) = 1\} = \boldsymbol{T}_n \setminus S_0(c)$.
\end{definition}

Let us define $\boldsymbol{S}_n$ as the set of unlabelled rooted binary trees, representing trees from $\TT_n$ without labels on the leaves.
The cardinality of $\boldsymbol{S}_n$ is less than that of $\TT_n$ since a single unlabelled tree in $\boldsymbol{S}_n$ can correspond to multiple labelled configurations in $\TT_n$. The differences in cardinalities can be observed in the provided Table~\ref{tab:cardinalities} and for more detailed visualisation of the elements in $\boldsymbol{S}_n$ and $\TT_n$, we refer to Figure~\ref{fig:TT_n and SS_n}. 

% Please add the following required packages to your document preamble:
% \usepackage{booktabs}
\begin{table}[!ht]
\label{tab:}
\caption{Counting rooted trees and shapes.}
\centering
\begin{tabular}{@{}ccc@{}}
\toprule
$n$ & $|\TT_n|$    &   $|\boldsymbol{S}_n|$ \\ \midrule
4 & 15    & 2  \\
5 & 105   & 3  \\
6 & 945   & 6  \\
7 & 10395 & 11 \\ \bottomrule
\end{tabular}
\label{tab:cardinalities}
\end{table}

\begin{figure}[h]
    \centering

\tikzset{every picture/.style={line width=0.75pt}} %set default line width to 0.75pt        

\begin{tikzpicture}[x=0.75pt,y=0.75pt,yscale=-1,xscale=1]
%uncomment if require: \path (0,300); %set diagram left start at 0, and has height of 300

%Straight Lines [id:da34349505056538665] 
\draw    (141,39.5) -- (189.5,130.75) ;
%Straight Lines [id:da41247258274782994] 
\draw    (141,39.5) -- (92.5,130.75) ;
%Straight Lines [id:da5207359470485597] 
\draw    (108.5,102) -- (120,129.75) ;
%Straight Lines [id:da510354093508514] 
\draw    (121,78) -- (143.5,129.75) ;
%Straight Lines [id:da4831485134927682] 
\draw    (174.5,102.5) -- (163,130.25) ;
%Flowchart: Connector [id:dp46097707661241727] 
\draw   (90,134.63) .. controls (90,132.48) and (91.12,130.75) .. (92.5,130.75) .. controls (93.88,130.75) and (95,132.48) .. (95,134.63) .. controls (95,136.77) and (93.88,138.5) .. (92.5,138.5) .. controls (91.12,138.5) and (90,136.77) .. (90,134.63) -- cycle ;
%Flowchart: Connector [id:dp7681196039413982] 
\draw   (117.5,133.63) .. controls (117.5,131.48) and (118.62,129.75) .. (120,129.75) .. controls (121.38,129.75) and (122.5,131.48) .. (122.5,133.63) .. controls (122.5,135.77) and (121.38,137.5) .. (120,137.5) .. controls (118.62,137.5) and (117.5,135.77) .. (117.5,133.63) -- cycle ;
%Flowchart: Connector [id:dp808273254099368] 
\draw   (141,133.63) .. controls (141,131.48) and (142.12,129.75) .. (143.5,129.75) .. controls (144.88,129.75) and (146,131.48) .. (146,133.63) .. controls (146,135.77) and (144.88,137.5) .. (143.5,137.5) .. controls (142.12,137.5) and (141,135.77) .. (141,133.63) -- cycle ;
%Flowchart: Connector [id:dp3076352200162793] 
\draw   (160.5,134.13) .. controls (160.5,131.98) and (161.62,130.25) .. (163,130.25) .. controls (164.38,130.25) and (165.5,131.98) .. (165.5,134.13) .. controls (165.5,136.27) and (164.38,138) .. (163,138) .. controls (161.62,138) and (160.5,136.27) .. (160.5,134.13) -- cycle ;
%Flowchart: Connector [id:dp17721637220095343] 
\draw   (187,134.63) .. controls (187,132.48) and (188.12,130.75) .. (189.5,130.75) .. controls (190.88,130.75) and (192,132.48) .. (192,134.63) .. controls (192,136.77) and (190.88,138.5) .. (189.5,138.5) .. controls (188.12,138.5) and (187,136.77) .. (187,134.63) -- cycle ;
%Straight Lines [id:da009474988312188959] 
\draw    (272.33,39.5) -- (320.83,130.75) ;
%Straight Lines [id:da8811673027975744] 
\draw    (272.33,39.5) -- (223.83,130.75) ;
%Straight Lines [id:da12240540826560653] 
\draw    (239.83,102) -- (251.33,129.75) ;
%Straight Lines [id:da00896215277853174] 
\draw    (252.33,78) -- (274.83,129.75) ;
%Straight Lines [id:da5598916496594744] 
\draw    (305.83,102.5) -- (294.33,130.25) ;
%Flowchart: Connector [id:dp12162704124953416] 
\draw   (221.33,134.63) .. controls (221.33,132.48) and (222.45,130.75) .. (223.83,130.75) .. controls (225.21,130.75) and (226.33,132.48) .. (226.33,134.63) .. controls (226.33,136.77) and (225.21,138.5) .. (223.83,138.5) .. controls (222.45,138.5) and (221.33,136.77) .. (221.33,134.63) -- cycle ;
%Flowchart: Connector [id:dp3626582383305932] 
\draw   (248.83,133.63) .. controls (248.83,131.48) and (249.95,129.75) .. (251.33,129.75) .. controls (252.71,129.75) and (253.83,131.48) .. (253.83,133.63) .. controls (253.83,135.77) and (252.71,137.5) .. (251.33,137.5) .. controls (249.95,137.5) and (248.83,135.77) .. (248.83,133.63) -- cycle ;
%Flowchart: Connector [id:dp2047116121139234] 
\draw   (272.33,133.63) .. controls (272.33,131.48) and (273.45,129.75) .. (274.83,129.75) .. controls (276.21,129.75) and (277.33,131.48) .. (277.33,133.63) .. controls (277.33,135.77) and (276.21,137.5) .. (274.83,137.5) .. controls (273.45,137.5) and (272.33,135.77) .. (272.33,133.63) -- cycle ;
%Flowchart: Connector [id:dp23136989192411406] 
\draw   (291.83,134.13) .. controls (291.83,131.98) and (292.95,130.25) .. (294.33,130.25) .. controls (295.71,130.25) and (296.83,131.98) .. (296.83,134.13) .. controls (296.83,136.27) and (295.71,138) .. (294.33,138) .. controls (292.95,138) and (291.83,136.27) .. (291.83,134.13) -- cycle ;
%Flowchart: Connector [id:dp10969544355239536] 
\draw   (318.33,134.63) .. controls (318.33,132.48) and (319.45,130.75) .. (320.83,130.75) .. controls (322.21,130.75) and (323.33,132.48) .. (323.33,134.63) .. controls (323.33,136.77) and (322.21,138.5) .. (320.83,138.5) .. controls (319.45,138.5) and (318.33,136.77) .. (318.33,134.63) -- cycle ;

% Text Node
\draw (88.4,140.6) node [anchor=north west][inner sep=0.75pt]  [font=\tiny]  {$1$};
% Text Node
\draw (140,140.2) node [anchor=north west][inner sep=0.75pt]  [font=\tiny]  {$2$};
% Text Node
\draw (160,140.2) node [anchor=north west][inner sep=0.75pt]  [font=\tiny]  {$3$};
% Text Node
\draw (117.6,139.4) node [anchor=north west][inner sep=0.75pt]  [font=\tiny]  {$5$};
% Text Node
\draw (187,140.03) node [anchor=north west][inner sep=0.75pt]  [font=\tiny]  {$4$};

\end{tikzpicture}
    
    \caption{\textbf{Labelled and unlabelled rooted trees.}
    On the right, we depict a rooted, labelled binary phylogenetic tree with 6 leaves.
    On the left, we present the same tree, but without labels. From the unlabelled tree on the left, we can generate 30 distinct trees in $\TT_5$.}
    \label{fig:TT_n and SS_n}
\end{figure}
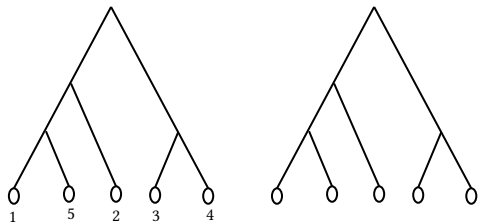

The set $\boldsymbol{S}_n$ can also be interpreted as a partition of $\boldsymbol{T}_n$.
For instance, consider $F \in \boldsymbol{S}_n$.
As observed, $F$ can generate a specific number of trees in $\boldsymbol{T}_n$, which depends on the shape of $F$.
This is achieved by assigning the labels $\{1, 2, \dots, n\}$ to the leaves.
Consequently, we obtain a partition $\bar{F} := \{F_1, F_2, \dots, F_v\}$ of $\boldsymbol{T}_n$, where each $F_i$ represents a distinct element of $\boldsymbol{S}_n$.
Here, $v$ denotes the number of unique tree shapes, which is given by $\frac{1}{n} \binom{2(n-1)}{n-1}$, a Catalan number (see \cite{Billera2001}).
We refer to $\bar{F}$ as the \textit{tree shape partition}.

We can also define operations on  $\TT_n$ to endow the space with a metric.
A rooted \textit{subtree-prune-regraft} (rSPR) operation on a tree $t \in \TT_n$, removes a subtree from 
$t$, suppresses the parent of this subtree, and then reattaches it to another edge or directly to the root of tree $t$, subsequently forming a new parent. A graphical description is given in Figure~\ref{fig:spr_des}.
Thus essentially it is possible to have three different rSPR operations (using the notation of Figure~\ref{fig:spr_des}):
\begin{itemize}
    \item [i)] Cut edge $e_1$, suppress the parent (a degree-3 vertex) of the subtree below $e_1$, and then regraft onto edge $e_2$ creating a new degree-3 vertex.  

    \item [ii)] Cut edge $e_3$, suppress the root, and regraft onto edge $e_1$ generating a new degree-3 vertex and a new root that was a degree-3 vertice before.

    \item[iii)] Cut edge $e_1$, suppress the parent, and regraft into the root, creating a new root.  
\end{itemize}

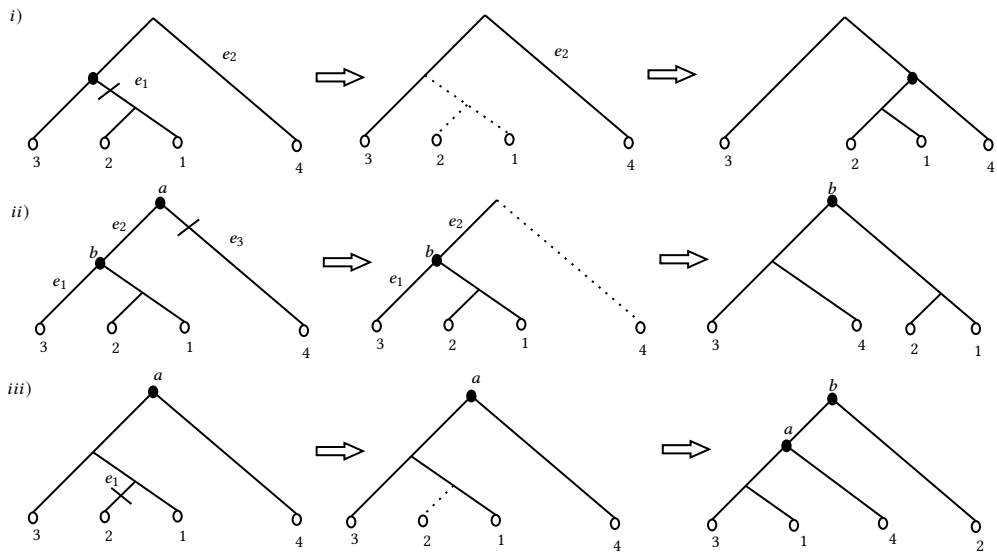
\begin{figure}[h]
    \centering

\tikzset{every picture/.style={line width=0.75pt}} %set default line width to 0.75pt        

\begin{tikzpicture}[x=0.75pt,y=0.75pt,yscale=-1,xscale=1]
%uncomment if require: \path (0,300); %set diagram left start at 0, and has height of 300

%Straight Lines [id:da726413126922808] 
\draw    (80.13,31.83) -- (151.81,93.3) ;
%Straight Lines [id:da7019565373636467] 
\draw    (80.13,31.83) -- (20.15,92.46) ;
%Straight Lines [id:da3190825724560238] 
\draw    (50.14,62.15) -- (92.32,91.63) ;
%Straight Lines [id:da21066537966360976] 
\draw    (71.23,76.89) -- (55.99,91.94) ;
%Flowchart: Connector [id:dp4375713887578552] 
\draw   (18.08,95.19) .. controls (18.08,93.69) and (19,92.46) .. (20.15,92.46) .. controls (21.29,92.46) and (22.22,93.69) .. (22.22,95.19) .. controls (22.22,96.7) and (21.29,97.92) .. (20.15,97.92) .. controls (19,97.92) and (18.08,96.7) .. (18.08,95.19) -- cycle ;
%Flowchart: Connector [id:dp1541685052067825] 
\draw  [fill={rgb, 255:red, 0; green, 0; blue, 0 }  ,fill opacity=1 ] (48.07,62.15) .. controls (48.07,60.64) and (49,59.42) .. (50.14,59.42) .. controls (51.28,59.42) and (52.21,60.64) .. (52.21,62.15) .. controls (52.21,63.66) and (51.28,64.88) .. (50.14,64.88) .. controls (49,64.88) and (48.07,63.66) .. (48.07,62.15) -- cycle ;
%Flowchart: Connector [id:dp1277563387218592] 
\draw   (53.92,94.67) .. controls (53.92,93.16) and (54.85,91.94) .. (55.99,91.94) .. controls (57.14,91.94) and (58.06,93.16) .. (58.06,94.67) .. controls (58.06,96.18) and (57.14,97.4) .. (55.99,97.4) .. controls (54.85,97.4) and (53.92,96.18) .. (53.92,94.67) -- cycle ;
%Flowchart: Connector [id:dp005726289007256913] 
\draw   (90.25,94.36) .. controls (90.25,92.85) and (91.18,91.63) .. (92.32,91.63) .. controls (93.47,91.63) and (94.39,92.85) .. (94.39,94.36) .. controls (94.39,95.86) and (93.47,97.09) .. (92.32,97.09) .. controls (91.18,97.09) and (90.25,95.86) .. (90.25,94.36) -- cycle ;
%Flowchart: Connector [id:dp6148509740699231] 
\draw   (149.74,96.03) .. controls (149.74,94.52) and (150.67,93.3) .. (151.81,93.3) .. controls (152.96,93.3) and (153.89,94.52) .. (153.89,96.03) .. controls (153.89,97.54) and (152.96,98.76) .. (151.81,98.76) .. controls (150.67,98.76) and (149.74,97.54) .. (149.74,96.03) -- cycle ;
%Right Arrow [id:dp8883744445474195] 
\draw   (161.74,59.56) -- (175.61,59.56) -- (175.61,57.5) -- (184.86,61.63) -- (175.61,65.75) -- (175.61,63.69) -- (161.74,63.69) -- cycle ;
%Straight Lines [id:da44172085601149025] 
\draw    (63.36,64.6) -- (52.69,72.75) ;
%Straight Lines [id:da6892065820526219] 
\draw    (245.78,30.33) -- (317.47,91.8) ;
%Straight Lines [id:da6438830641387785] 
\draw    (245.78,30.33) -- (185.8,90.96) ;
%Straight Lines [id:da12359853977063873] 
\draw  [dash pattern={on 0.84pt off 2.51pt}]  (215.79,60.65) -- (257.97,90.13) ;
%Straight Lines [id:da8857050028092293] 
\draw  [dash pattern={on 0.84pt off 2.51pt}]  (236.88,75.39) -- (221.64,90.44) ;
%Flowchart: Connector [id:dp3589675799042409] 
\draw   (219.57,93.17) .. controls (219.57,91.66) and (220.5,90.44) .. (221.64,90.44) .. controls (222.79,90.44) and (223.72,91.66) .. (223.72,93.17) .. controls (223.72,94.68) and (222.79,95.9) .. (221.64,95.9) .. controls (220.5,95.9) and (219.57,94.68) .. (219.57,93.17) -- cycle ;
%Flowchart: Connector [id:dp799044429416037] 
\draw   (255.9,92.86) .. controls (255.9,91.35) and (256.83,90.13) .. (257.97,90.13) .. controls (259.12,90.13) and (260.05,91.35) .. (260.05,92.86) .. controls (260.05,94.36) and (259.12,95.59) .. (257.97,95.59) .. controls (256.83,95.59) and (255.9,94.36) .. (255.9,92.86) -- cycle ;
%Flowchart: Connector [id:dp7647815144802297] 
\draw   (315.39,94.53) .. controls (315.39,93.02) and (316.32,91.8) .. (317.47,91.8) .. controls (318.61,91.8) and (319.54,93.02) .. (319.54,94.53) .. controls (319.54,96.04) and (318.61,97.26) .. (317.47,97.26) .. controls (316.32,97.26) and (315.39,96.04) .. (315.39,94.53) -- cycle ;
%Right Arrow [id:dp5714749829107644] 
\draw   (327.39,58.06) -- (341.26,58.06) -- (341.26,56) -- (350.51,60.13) -- (341.26,64.25) -- (341.26,62.19) -- (327.39,62.19) -- cycle ;
%Flowchart: Connector [id:dp29593969567898903] 
\draw   (183.73,93.69) .. controls (183.73,92.19) and (184.66,90.96) .. (185.8,90.96) .. controls (186.95,90.96) and (187.87,92.19) .. (187.87,93.69) .. controls (187.87,95.2) and (186.95,96.42) .. (185.8,96.42) .. controls (184.66,96.42) and (183.73,95.2) .. (183.73,93.69) -- cycle ;
%Straight Lines [id:da9990277342573339] 
\draw    (425.36,31.33) -- (497.05,92.8) ;
%Straight Lines [id:da7212433100546773] 
\draw    (425.36,31.33) -- (365.38,91.96) ;
%Flowchart: Connector [id:dp3543575864909081] 
\draw   (363.31,94.69) .. controls (363.31,93.19) and (364.24,91.96) .. (365.38,91.96) .. controls (366.52,91.96) and (367.45,93.19) .. (367.45,94.69) .. controls (367.45,96.2) and (366.52,97.42) .. (365.38,97.42) .. controls (364.24,97.42) and (363.31,96.2) .. (363.31,94.69) -- cycle ;
%Flowchart: Connector [id:dp21460858843519204] 
\draw  [fill={rgb, 255:red, 0; green, 0; blue, 0 }  ,fill opacity=1 ] (457.06,62.07) .. controls (457.06,60.56) and (457.99,59.34) .. (459.13,59.34) .. controls (460.28,59.34) and (461.2,60.56) .. (461.2,62.07) .. controls (461.2,63.57) and (460.28,64.8) .. (459.13,64.8) .. controls (457.99,64.8) and (457.06,63.57) .. (457.06,62.07) -- cycle ;
%Flowchart: Connector [id:dp7376728221377529] 
\draw   (494.97,95.53) .. controls (494.97,94.02) and (495.9,92.8) .. (497.05,92.8) .. controls (498.19,92.8) and (499.12,94.02) .. (499.12,95.53) .. controls (499.12,97.04) and (498.19,98.26) .. (497.05,98.26) .. controls (495.9,98.26) and (494.97,97.04) .. (494.97,95.53) -- cycle ;
%Straight Lines [id:da611257461228542] 
\draw    (459.13,62.07) -- (428.44,92.75) ;
%Straight Lines [id:da24135281485317428] 
\draw    (443.79,77.41) -- (462.82,91.25) ;
%Flowchart: Connector [id:dp4551778330194898] 
\draw   (426.41,95.17) .. controls (426.41,93.66) and (427.34,92.44) .. (428.49,92.44) .. controls (429.63,92.44) and (430.56,93.66) .. (430.56,95.17) .. controls (430.56,96.68) and (429.63,97.9) .. (428.49,97.9) .. controls (427.34,97.9) and (426.41,96.68) .. (426.41,95.17) -- cycle ;
%Flowchart: Connector [id:dp927752619038172] 
\draw   (461.56,93.86) .. controls (461.56,92.35) and (462.48,91.13) .. (463.63,91.13) .. controls (464.77,91.13) and (465.7,92.35) .. (465.7,93.86) .. controls (465.7,95.36) and (464.77,96.59) .. (463.63,96.59) .. controls (462.48,96.59) and (461.56,95.36) .. (461.56,93.86) -- cycle ;
%Straight Lines [id:da6263938690249176] 
\draw    (83.69,124.83) -- (155.37,186.3) ;
%Straight Lines [id:da996370502770525] 
\draw    (83.69,124.83) -- (23.71,185.46) ;
%Straight Lines [id:da9274724359411328] 
\draw    (53.7,155.15) -- (95.88,184.63) ;
%Straight Lines [id:da6895665689290809] 
\draw    (74.79,169.89) -- (59.55,184.94) ;
%Flowchart: Connector [id:dp5478720449187005] 
\draw   (21.63,188.19) .. controls (21.63,186.69) and (22.56,185.46) .. (23.71,185.46) .. controls (24.85,185.46) and (25.78,186.69) .. (25.78,188.19) .. controls (25.78,189.7) and (24.85,190.92) .. (23.71,190.92) .. controls (22.56,190.92) and (21.63,189.7) .. (21.63,188.19) -- cycle ;
%Flowchart: Connector [id:dp4231943267228644] 
\draw  [fill={rgb, 255:red, 0; green, 0; blue, 0 }  ,fill opacity=1 ] (81.61,124.83) .. controls (81.61,123.33) and (82.54,122.1) .. (83.69,122.1) .. controls (84.83,122.1) and (85.76,123.33) .. (85.76,124.83) .. controls (85.76,126.34) and (84.83,127.56) .. (83.69,127.56) .. controls (82.54,127.56) and (81.61,126.34) .. (81.61,124.83) -- cycle ;
%Flowchart: Connector [id:dp9537129496199641] 
\draw   (57.47,187.67) .. controls (57.47,186.16) and (58.4,184.94) .. (59.55,184.94) .. controls (60.69,184.94) and (61.62,186.16) .. (61.62,187.67) .. controls (61.62,189.18) and (60.69,190.4) .. (59.55,190.4) .. controls (58.4,190.4) and (57.47,189.18) .. (57.47,187.67) -- cycle ;
%Flowchart: Connector [id:dp2503171974758176] 
\draw   (93.8,187.36) .. controls (93.8,185.85) and (94.73,184.63) .. (95.88,184.63) .. controls (97.02,184.63) and (97.95,185.85) .. (97.95,187.36) .. controls (97.95,188.86) and (97.02,190.09) .. (95.88,190.09) .. controls (94.73,190.09) and (93.8,188.86) .. (93.8,187.36) -- cycle ;
%Flowchart: Connector [id:dp8149836531681511] 
\draw   (153.3,189.03) .. controls (153.3,187.52) and (154.23,186.3) .. (155.37,186.3) .. controls (156.52,186.3) and (157.44,187.52) .. (157.44,189.03) .. controls (157.44,190.54) and (156.52,191.76) .. (155.37,191.76) .. controls (154.23,191.76) and (153.3,190.54) .. (153.3,189.03) -- cycle ;
%Right Arrow [id:dp7230107978129556] 
\draw   (165.3,152.56) -- (179.17,152.56) -- (179.17,150.5) -- (188.41,154.63) -- (179.17,158.75) -- (179.17,156.69) -- (165.3,156.69) -- cycle ;
%Straight Lines [id:da30235041902615567] 
\draw    (103.07,133.1) -- (92.4,141.25) ;
%Straight Lines [id:da5955980943946924] 
\draw  [dash pattern={on 0.84pt off 2.51pt}]  (251.71,123.33) -- (323.39,184.8) ;
%Straight Lines [id:da6116274191405] 
\draw    (251.71,123.33) -- (191.73,183.96) ;
%Straight Lines [id:da31512240518599044] 
\draw    (221.72,153.65) -- (263.9,183.13) ;
%Straight Lines [id:da1450188053565009] 
\draw    (242.81,168.39) -- (227.57,183.44) ;
%Flowchart: Connector [id:dp5942504320207607] 
\draw   (189.66,186.69) .. controls (189.66,185.19) and (190.58,183.96) .. (191.73,183.96) .. controls (192.87,183.96) and (193.8,185.19) .. (193.8,186.69) .. controls (193.8,188.2) and (192.87,189.42) .. (191.73,189.42) .. controls (190.58,189.42) and (189.66,188.2) .. (189.66,186.69) -- cycle ;
%Flowchart: Connector [id:dp4146667528420884] 
\draw   (225.5,186.17) .. controls (225.5,184.66) and (226.43,183.44) .. (227.57,183.44) .. controls (228.72,183.44) and (229.64,184.66) .. (229.64,186.17) .. controls (229.64,187.68) and (228.72,188.9) .. (227.57,188.9) .. controls (226.43,188.9) and (225.5,187.68) .. (225.5,186.17) -- cycle ;
%Flowchart: Connector [id:dp9073315981109527] 
\draw   (261.83,185.86) .. controls (261.83,184.35) and (262.76,183.13) .. (263.9,183.13) .. controls (265.04,183.13) and (265.97,184.35) .. (265.97,185.86) .. controls (265.97,187.36) and (265.04,188.59) .. (263.9,188.59) .. controls (262.76,188.59) and (261.83,187.36) .. (261.83,185.86) -- cycle ;
%Flowchart: Connector [id:dp8039620293735512] 
\draw   (321.32,187.53) .. controls (321.32,186.02) and (322.25,184.8) .. (323.39,184.8) .. controls (324.54,184.8) and (325.47,186.02) .. (325.47,187.53) .. controls (325.47,189.04) and (324.54,190.26) .. (323.39,190.26) .. controls (322.25,190.26) and (321.32,189.04) .. (321.32,187.53) -- cycle ;
%Right Arrow [id:dp5801025261591604] 
\draw   (333.32,151.06) -- (347.19,151.06) -- (347.19,149) -- (356.44,153.13) -- (347.19,157.25) -- (347.19,155.19) -- (333.32,155.19) -- cycle ;
%Straight Lines [id:da27627168898263577] 
\draw    (419.14,123.83) -- (490.82,185.3) ;
%Straight Lines [id:da8921922920001484] 
\draw    (419.14,123.83) -- (359.16,184.46) ;
%Straight Lines [id:da7040729629251989] 
\draw    (389.15,154.15) -- (431.33,183.63) ;
%Flowchart: Connector [id:dp6845358368190166] 
\draw   (357.08,187.19) .. controls (357.08,185.69) and (358.01,184.46) .. (359.16,184.46) .. controls (360.3,184.46) and (361.23,185.69) .. (361.23,187.19) .. controls (361.23,188.7) and (360.3,189.92) .. (359.16,189.92) .. controls (358.01,189.92) and (357.08,188.7) .. (357.08,187.19) -- cycle ;
%Flowchart: Connector [id:dp37006475336111255] 
\draw  [fill={rgb, 255:red, 0; green, 0; blue, 0 }  ,fill opacity=1 ] (417.06,123.83) .. controls (417.06,122.33) and (417.99,121.1) .. (419.14,121.1) .. controls (420.28,121.1) and (421.21,122.33) .. (421.21,123.83) .. controls (421.21,125.34) and (420.28,126.56) .. (419.14,126.56) .. controls (417.99,126.56) and (417.06,125.34) .. (417.06,123.83) -- cycle ;
%Flowchart: Connector [id:dp19000251863987194] 
\draw   (429.25,186.36) .. controls (429.25,184.85) and (430.18,183.63) .. (431.33,183.63) .. controls (432.47,183.63) and (433.4,184.85) .. (433.4,186.36) .. controls (433.4,187.86) and (432.47,189.09) .. (431.33,189.09) .. controls (430.18,189.09) and (429.25,187.86) .. (429.25,186.36) -- cycle ;
%Flowchart: Connector [id:dp9706368460841652] 
\draw   (488.75,188.03) .. controls (488.75,186.52) and (489.68,185.3) .. (490.82,185.3) .. controls (491.97,185.3) and (492.89,186.52) .. (492.89,188.03) .. controls (492.89,189.54) and (491.97,190.76) .. (490.82,190.76) .. controls (489.68,190.76) and (488.75,189.54) .. (488.75,188.03) -- cycle ;
%Straight Lines [id:da9848538042878616] 
\draw    (473.36,170.39) -- (458.12,185.44) ;
%Flowchart: Connector [id:dp4666648045792847] 
\draw   (456.05,188.17) .. controls (456.05,186.66) and (456.97,185.44) .. (458.12,185.44) .. controls (459.26,185.44) and (460.19,186.66) .. (460.19,188.17) .. controls (460.19,189.68) and (459.26,190.9) .. (458.12,190.9) .. controls (456.97,190.9) and (456.05,189.68) .. (456.05,188.17) -- cycle ;
%Straight Lines [id:da44556894584066975] 
\draw    (80.13,219.83) -- (151.81,281.3) ;
%Straight Lines [id:da7865743805097725] 
\draw    (80.13,219.83) -- (20.15,280.46) ;
%Straight Lines [id:da5705425931436705] 
\draw    (50.14,250.15) -- (92.32,279.63) ;
%Straight Lines [id:da4191341624239864] 
\draw    (71.23,264.89) -- (55.99,279.94) ;
%Flowchart: Connector [id:dp5148196790981989] 
\draw   (18.08,283.19) .. controls (18.08,281.69) and (19,280.46) .. (20.15,280.46) .. controls (21.29,280.46) and (22.22,281.69) .. (22.22,283.19) .. controls (22.22,284.7) and (21.29,285.92) .. (20.15,285.92) .. controls (19,285.92) and (18.08,284.7) .. (18.08,283.19) -- cycle ;
%Flowchart: Connector [id:dp06261729844938912] 
\draw  [fill={rgb, 255:red, 0; green, 0; blue, 0 }  ,fill opacity=1 ] (78.06,219.83) .. controls (78.06,218.33) and (78.99,217.1) .. (80.13,217.1) .. controls (81.27,217.1) and (82.2,218.33) .. (82.2,219.83) .. controls (82.2,221.34) and (81.27,222.56) .. (80.13,222.56) .. controls (78.99,222.56) and (78.06,221.34) .. (78.06,219.83) -- cycle ;
%Flowchart: Connector [id:dp42881595997140987] 
\draw   (53.92,282.67) .. controls (53.92,281.16) and (54.85,279.94) .. (55.99,279.94) .. controls (57.14,279.94) and (58.06,281.16) .. (58.06,282.67) .. controls (58.06,284.18) and (57.14,285.4) .. (55.99,285.4) .. controls (54.85,285.4) and (53.92,284.18) .. (53.92,282.67) -- cycle ;
%Flowchart: Connector [id:dp7984875075796762] 
\draw   (90.25,282.36) .. controls (90.25,280.85) and (91.18,279.63) .. (92.32,279.63) .. controls (93.47,279.63) and (94.39,280.85) .. (94.39,282.36) .. controls (94.39,283.86) and (93.47,285.09) .. (92.32,285.09) .. controls (91.18,285.09) and (90.25,283.86) .. (90.25,282.36) -- cycle ;
%Flowchart: Connector [id:dp46758780913076103] 
\draw   (149.74,284.03) .. controls (149.74,282.52) and (150.67,281.3) .. (151.81,281.3) .. controls (152.96,281.3) and (153.89,282.52) .. (153.89,284.03) .. controls (153.89,285.54) and (152.96,286.76) .. (151.81,286.76) .. controls (150.67,286.76) and (149.74,285.54) .. (149.74,284.03) -- cycle ;
%Right Arrow [id:dp6763956653637782] 
\draw   (161.74,247.56) -- (175.61,247.56) -- (175.61,245.5) -- (184.86,249.63) -- (175.61,253.75) -- (175.61,251.69) -- (161.74,251.69) -- cycle ;
%Straight Lines [id:da39102396299467723] 
\draw    (59.21,268.1) -- (69.29,276.25) ;
%Right Arrow [id:dp48595749555774304] 
\draw   (334.51,246.64) -- (348.38,246.64) -- (348.38,244.57) -- (357.62,248.7) -- (348.38,252.82) -- (348.38,250.76) -- (334.51,250.76) -- cycle ;
%Flowchart: Connector [id:dp671701268141633] 
\draw  [fill={rgb, 255:red, 0; green, 0; blue, 0 }  ,fill opacity=1 ] (51.62,155.15) .. controls (51.62,153.64) and (52.55,152.42) .. (53.7,152.42) .. controls (54.84,152.42) and (55.77,153.64) .. (55.77,155.15) .. controls (55.77,156.66) and (54.84,157.88) .. (53.7,157.88) .. controls (52.55,157.88) and (51.62,156.66) .. (51.62,155.15) -- cycle ;
%Flowchart: Connector [id:dp7555027610984175] 
\draw  [fill={rgb, 255:red, 0; green, 0; blue, 0 }  ,fill opacity=1 ] (219.65,153.65) .. controls (219.65,152.14) and (220.57,150.92) .. (221.72,150.92) .. controls (222.86,150.92) and (223.79,152.14) .. (223.79,153.65) .. controls (223.79,155.16) and (222.86,156.38) .. (221.72,156.38) .. controls (220.57,156.38) and (219.65,155.16) .. (219.65,153.65) -- cycle ;
%Straight Lines [id:da4762924818698744] 
\draw    (238.97,221.91) -- (310.65,283.37) ;
%Straight Lines [id:da5505568286356981] 
\draw    (238.97,221.91) -- (178.98,282.54) ;
%Straight Lines [id:da10301696474229849] 
\draw    (208.97,252.22) -- (251.16,281.7) ;
%Straight Lines [id:da7347673376641952] 
\draw  [dash pattern={on 0.84pt off 2.51pt}]  (230.07,266.96) -- (214.83,282.01) ;
%Flowchart: Connector [id:dp3082986961419274] 
\draw   (176.91,285.27) .. controls (176.91,283.76) and (177.84,282.54) .. (178.98,282.54) .. controls (180.13,282.54) and (181.06,283.76) .. (181.06,285.27) .. controls (181.06,286.77) and (180.13,288) .. (178.98,288) .. controls (177.84,288) and (176.91,286.77) .. (176.91,285.27) -- cycle ;
%Flowchart: Connector [id:dp07662127977543176] 
\draw  [fill={rgb, 255:red, 0; green, 0; blue, 0 }  ,fill opacity=1 ] (236.89,221.91) .. controls (236.89,220.4) and (237.82,219.18) .. (238.97,219.18) .. controls (240.11,219.18) and (241.04,220.4) .. (241.04,221.91) .. controls (241.04,223.41) and (240.11,224.64) .. (238.97,224.64) .. controls (237.82,224.64) and (236.89,223.41) .. (236.89,221.91) -- cycle ;
%Flowchart: Connector [id:dp9734073450104992] 
\draw   (212.75,284.74) .. controls (212.75,283.24) and (213.68,282.01) .. (214.83,282.01) .. controls (215.97,282.01) and (216.9,283.24) .. (216.9,284.74) .. controls (216.9,286.25) and (215.97,287.47) .. (214.83,287.47) .. controls (213.68,287.47) and (212.75,286.25) .. (212.75,284.74) -- cycle ;
%Flowchart: Connector [id:dp6683574028816635] 
\draw   (249.08,284.43) .. controls (249.08,282.92) and (250.01,281.7) .. (251.16,281.7) .. controls (252.3,281.7) and (253.23,282.92) .. (253.23,284.43) .. controls (253.23,285.94) and (252.3,287.16) .. (251.16,287.16) .. controls (250.01,287.16) and (249.08,285.94) .. (249.08,284.43) -- cycle ;
%Flowchart: Connector [id:dp2521429148693586] 
\draw   (308.58,286.1) .. controls (308.58,284.6) and (309.5,283.37) .. (310.65,283.37) .. controls (311.79,283.37) and (312.72,284.6) .. (312.72,286.1) .. controls (312.72,287.61) and (311.79,288.83) .. (310.65,288.83) .. controls (309.5,288.83) and (308.58,287.61) .. (308.58,286.1) -- cycle ;
%Straight Lines [id:da5568034190999316] 
\draw    (419.14,223.41) -- (490.82,284.87) ;
%Straight Lines [id:da8742114971984283] 
\draw    (419.14,223.41) -- (359.16,284.04) ;
%Straight Lines [id:da566328703412057] 
\draw    (376.11,267.22) -- (400,284.25) ;
%Flowchart: Connector [id:dp6769107946660708] 
\draw   (357.08,286.77) .. controls (357.08,285.26) and (358.01,284.04) .. (359.16,284.04) .. controls (360.3,284.04) and (361.23,285.26) .. (361.23,286.77) .. controls (361.23,288.27) and (360.3,289.5) .. (359.16,289.5) .. controls (358.01,289.5) and (357.08,288.27) .. (357.08,286.77) -- cycle ;
%Flowchart: Connector [id:dp9810978348268238] 
\draw  [fill={rgb, 255:red, 0; green, 0; blue, 0 }  ,fill opacity=1 ] (417.06,223.41) .. controls (417.06,221.9) and (417.99,220.68) .. (419.14,220.68) .. controls (420.28,220.68) and (421.21,221.9) .. (421.21,223.41) .. controls (421.21,224.91) and (420.28,226.14) .. (419.14,226.14) .. controls (417.99,226.14) and (417.06,224.91) .. (417.06,223.41) -- cycle ;
%Flowchart: Connector [id:dp8685958424536417] 
\draw   (397.92,286.98) .. controls (397.92,285.47) and (398.85,284.25) .. (400,284.25) .. controls (401.14,284.25) and (402.07,285.47) .. (402.07,286.98) .. controls (402.07,288.49) and (401.14,289.71) .. (400,289.71) .. controls (398.85,289.71) and (397.92,288.49) .. (397.92,286.98) -- cycle ;
%Flowchart: Connector [id:dp3580228876919458] 
\draw   (488.75,287.6) .. controls (488.75,286.1) and (489.68,284.87) .. (490.82,284.87) .. controls (491.97,284.87) and (492.89,286.1) .. (492.89,287.6) .. controls (492.89,289.11) and (491.97,290.33) .. (490.82,290.33) .. controls (489.68,290.33) and (488.75,289.11) .. (488.75,287.6) -- cycle ;
%Straight Lines [id:da6787709002177036] 
\draw    (396.26,246.97) -- (444.45,283.25) ;
%Flowchart: Connector [id:dp7003731803528068] 
\draw   (442.37,285.98) .. controls (442.37,284.47) and (443.3,283.25) .. (444.45,283.25) .. controls (445.59,283.25) and (446.52,284.47) .. (446.52,285.98) .. controls (446.52,287.49) and (445.59,288.71) .. (444.45,288.71) .. controls (443.3,288.71) and (442.37,287.49) .. (442.37,285.98) -- cycle ;
%Flowchart: Connector [id:dp9265490273966308] 
\draw  [fill={rgb, 255:red, 0; green, 0; blue, 0 }  ,fill opacity=1 ] (394.19,246.97) .. controls (394.19,245.47) and (395.11,244.24) .. (396.26,244.24) .. controls (397.4,244.24) and (398.33,245.47) .. (398.33,246.97) .. controls (398.33,248.48) and (397.4,249.7) .. (396.26,249.7) .. controls (395.11,249.7) and (394.19,248.48) .. (394.19,246.97) -- cycle ;

% Text Node
\draw (91.44,99.91) node [anchor=north west][inner sep=0.75pt]  [font=\tiny]  {$1$};
% Text Node
\draw (54.5,100.22) node [anchor=north west][inner sep=0.75pt]  [font=\tiny]  {$2$};
% Text Node
\draw (18.54,99.69) node [anchor=north west][inner sep=0.75pt]  [font=\tiny]  {$3$};
% Text Node
\draw (149.47,102.83) node [anchor=north west][inner sep=0.75pt]  [font=\tiny]  {$4$};
% Text Node
\draw (69.21,60.4) node [anchor=north west][inner sep=0.75pt]  [font=\tiny]  {$e_{1}$};
% Text Node
\draw (113.07,46.9) node [anchor=north west][inner sep=0.75pt]  [font=\tiny]  {$e_{2}$};
% Text Node
\draw (6.98,25.4) node [anchor=north west][inner sep=0.75pt]  [font=\tiny]  {$i)$};
% Text Node
\draw (257.09,98.41) node [anchor=north west][inner sep=0.75pt]  [font=\tiny]  {$1$};
% Text Node
\draw (220.15,98.72) node [anchor=north west][inner sep=0.75pt]  [font=\tiny]  {$2$};
% Text Node
\draw (184.19,98.19) node [anchor=north west][inner sep=0.75pt]  [font=\tiny]  {$3$};
% Text Node
\draw (315.12,101.33) node [anchor=north west][inner sep=0.75pt]  [font=\tiny]  {$4$};
% Text Node
\draw (278.72,45.4) node [anchor=north west][inner sep=0.75pt]  [font=\tiny]  {$e_{2}$};
% Text Node
\draw (363.77,99.19) node [anchor=north west][inner sep=0.75pt]  [font=\tiny]  {$3$};
% Text Node
\draw (494.7,102.33) node [anchor=north west][inner sep=0.75pt]  [font=\tiny]  {$4$};
% Text Node
\draw (427,100.72) node [anchor=north west][inner sep=0.75pt]  [font=\tiny]  {$2$};
% Text Node
\draw (462.75,99.41) node [anchor=north west][inner sep=0.75pt]  [font=\tiny]  {$1$};
% Text Node
\draw (95,192.91) node [anchor=north west][inner sep=0.75pt]  [font=\tiny]  {$1$};
% Text Node
\draw (58.06,193.22) node [anchor=north west][inner sep=0.75pt]  [font=\tiny]  {$2$};
% Text Node
\draw (22.09,192.69) node [anchor=north west][inner sep=0.75pt]  [font=\tiny]  {$3$};
% Text Node
\draw (153.03,195.83) node [anchor=north west][inner sep=0.75pt]  [font=\tiny]  {$4$};
% Text Node
\draw (28.32,161.4) node [anchor=north west][inner sep=0.75pt]  [font=\tiny]  {$e_{1}$};
% Text Node
\draw (116.63,139.9) node [anchor=north west][inner sep=0.75pt]  [font=\tiny]  {$e_{3}$};
% Text Node
\draw (7.58,124.4) node [anchor=north west][inner sep=0.75pt]  [font=\tiny]  {$ii)$};
% Text Node
\draw (57.95,132.4) node [anchor=north west][inner sep=0.75pt]  [font=\tiny]  {$e_{2}$};
% Text Node
\draw (262.35,190.76) node [anchor=north west][inner sep=0.75pt]  [font=\tiny]  {$1$};
% Text Node
\draw (226.08,191.72) node [anchor=north west][inner sep=0.75pt]  [font=\tiny]  {$2$};
% Text Node
\draw (190.12,191.19) node [anchor=north west][inner sep=0.75pt]  [font=\tiny]  {$3$};
% Text Node
\draw (321.05,194.33) node [anchor=north west][inner sep=0.75pt]  [font=\tiny]  {$4$};
% Text Node
\draw (196.34,159.9) node [anchor=north west][inner sep=0.75pt]  [font=\tiny]  {$e_{1}$};
% Text Node
\draw (225.98,130.9) node [anchor=north west][inner sep=0.75pt]  [font=\tiny]  {$e_{2}$};
% Text Node
\draw (430.45,191.91) node [anchor=north west][inner sep=0.75pt]  [font=\tiny]  {$4$};
% Text Node
\draw (357.54,191.69) node [anchor=north west][inner sep=0.75pt]  [font=\tiny]  {$3$};
% Text Node
\draw (488.48,194.83) node [anchor=north west][inner sep=0.75pt]  [font=\tiny]  {$1$};
% Text Node
\draw (456.63,193.72) node [anchor=north west][inner sep=0.75pt]  [font=\tiny]  {$2$};
% Text Node
\draw (91.44,287.91) node [anchor=north west][inner sep=0.75pt]  [font=\tiny]  {$1$};
% Text Node
\draw (54.5,288.22) node [anchor=north west][inner sep=0.75pt]  [font=\tiny]  {$2$};
% Text Node
\draw (18.54,287.69) node [anchor=north west][inner sep=0.75pt]  [font=\tiny]  {$3$};
% Text Node
\draw (149.47,290.83) node [anchor=north west][inner sep=0.75pt]  [font=\tiny]  {$4$};
% Text Node
\draw (54.4,260.4) node [anchor=north west][inner sep=0.75pt]  [font=\tiny]  {$e_{1}$};
% Text Node
\draw (5.8,213.4) node [anchor=north west][inner sep=0.75pt]  [font=\tiny]  {$iii)$};
% Text Node
\draw (81.07,114.4) node [anchor=north west][inner sep=0.75pt]  [font=\tiny]  {$a$};
% Text Node
\draw (46.69,144.9) node [anchor=north west][inner sep=0.75pt]  [font=\tiny]  {$b$};
% Text Node
\draw (214.42,144.4) node [anchor=north west][inner sep=0.75pt]  [font=\tiny]  {$b$};
% Text Node
\draw (415.33,112.9) node [anchor=north west][inner sep=0.75pt]  [font=\tiny]  {$b$};
% Text Node
\draw (250.28,289.98) node [anchor=north west][inner sep=0.75pt]  [font=\tiny]  {$1$};
% Text Node
\draw (213.34,290.3) node [anchor=north west][inner sep=0.75pt]  [font=\tiny]  {$2$};
% Text Node
\draw (177.37,289.76) node [anchor=north west][inner sep=0.75pt]  [font=\tiny]  {$3$};
% Text Node
\draw (308.31,292.9) node [anchor=north west][inner sep=0.75pt]  [font=\tiny]  {$4$};
% Text Node
\draw (78.7,208.4) node [anchor=north west][inner sep=0.75pt]  [font=\tiny]  {$a$};
% Text Node
\draw (237.53,209.9) node [anchor=north west][inner sep=0.75pt]  [font=\tiny]  {$a$};
% Text Node
\draw (401.41,290.38) node [anchor=north west][inner sep=0.75pt]  [font=\tiny]  {$1$};
% Text Node
\draw (489.52,292.8) node [anchor=north west][inner sep=0.75pt]  [font=\tiny]  {$2$};
% Text Node
\draw (357.54,291.26) node [anchor=north west][inner sep=0.75pt]  [font=\tiny]  {$3$};
% Text Node
\draw (445.86,289.38) node [anchor=north west][inner sep=0.75pt]  [font=\tiny]  {$4$};
% Text Node
\draw (393.4,235.9) node [anchor=north west][inner sep=0.75pt]  [font=\tiny]  {$a$};
% Text Node
\draw (415.92,211.9) node [anchor=north west][inner sep=0.75pt]  [font=\tiny]  {$b$};

\end{tikzpicture} 
    
    \caption{\textbf{The three possible rooted subtree prune-and-regraft (rSPR) operations.}
    As described in the text, there are three main ways a rSPR operation can be performed, depending on how it interacts with the root.
    Please notice that open circles mean leaves and closed circles internal vertices.}
    \label{fig:spr_des}
\end{figure}

Building on this premise, we can define the rSPR-graph as $G_n:= G(\TT_n, E_n)$, wherein the vertices represent trees, and the set of edges $E_n$ indicates that an rSPR operation can transform one tree into another.
Furthermore, we introduce a measure of distance, the SPR-metric $d_{rSPR}:\TT_n \times \TT_n \to \mathbb{N}$, such that for any pair of trees $t, t^{\prime} \in \TT_n$, $d_{rSPR}(t, t^{\prime}) \geq 0$ represents the minimum number of rSPR operations required to transform $t$ into $t^{\prime}$.
We also define, for a tree $t \in \TT_n$, the set $N(t):=\{ z \in \TT_n : d_{rSPR}(t,z) = 1 \}$, i.e. the set of neighbours of $t$ in $G_n$.

%\comL{We need more details on the SPR graph. In particular, we need the definition of (and possibly a few results on) the SPR neighbourhood of a tree. We use that below in the definition of the tree-valued MCs.}

The diameter of the rSPR-graph is then defined as the maximum distance in $d_{rSPR}$ between two trees.
This diameter is $\mathcal{O}(n)$, as proved in Proposition 5.1 of~\cite{Song2003}.
From this proposition, it can be inferred that the rSPR-graph is connected.
Furthermore, the neighbourhood size of the rSPR-graph is $\mathcal{O}(n^2)$, as demonstrated in Corollary 4.2 of~\cite{Song2003}.
This result also yields the exact formula for the maximum and minimum number of neighbours of a tree in $\TT_n$.
The trees that have the maximum and minimum number of neighbours are, respectively, called the balanced tree and the ladder tree, as shown in Figure~\ref{fig:bal_lad}.
It is also noteworthy that computing the distance between two trees in this context is NP-hard, a complexity issue addressed in~\cite{bordewich2005}.

\begin{figure}[h]
    \centering

\tikzset{every picture/.style={line width=0.75pt}} %set default line width to 0.75pt        

\begin{tikzpicture}[x=0.75pt,y=0.75pt,yscale=-1,xscale=1]
%uncomment if require: \path (0,300); %set diagram left start at 0, and has height of 300

%Straight Lines [id:da7496423536708363] 
\draw    (130.5,21) -- (247.5,118.25) ;
%Straight Lines [id:da48838461115281806] 
\draw    (130.5,21) -- (40,119.75) ;
%Straight Lines [id:da9133235859863922] 
\draw    (98,55) -- (158,120.25) ;
%Straight Lines [id:da07424335832907691] 
\draw    (70,88.5) -- (99.5,119.75) ;
%Straight Lines [id:da42330746569949484] 
\draw    (211,87.5) -- (187,119.25) ;
%Shape: Circle [id:dp30556286494539253] 
\draw   (37.66,122.14) .. controls (37.63,120.82) and (38.68,119.75) .. (40,119.75) .. controls (41.32,119.75) and (42.41,120.82) .. (42.44,122.14) .. controls (42.47,123.46) and (41.42,124.54) .. (40.1,124.54) .. controls (38.77,124.54) and (37.68,123.46) .. (37.66,122.14) -- cycle ;
%Shape: Circle [id:dp32075416211725916] 
\draw   (97.16,122.14) .. controls (97.13,120.82) and (98.18,119.75) .. (99.5,119.75) .. controls (100.82,119.75) and (101.91,120.82) .. (101.94,122.14) .. controls (101.97,123.46) and (100.92,124.54) .. (99.6,124.54) .. controls (98.27,124.54) and (97.18,123.46) .. (97.16,122.14) -- cycle ;
%Shape: Circle [id:dp20807650883198892] 
\draw   (155.66,122.64) .. controls (155.63,121.32) and (156.68,120.25) .. (158,120.25) .. controls (159.32,120.25) and (160.41,121.32) .. (160.44,122.64) .. controls (160.47,123.96) and (159.42,125.04) .. (158.1,125.04) .. controls (156.77,125.04) and (155.68,123.96) .. (155.66,122.64) -- cycle ;
%Shape: Circle [id:dp08003527132683419] 
\draw   (184.66,121.64) .. controls (184.63,120.32) and (185.68,119.25) .. (187,119.25) .. controls (188.32,119.25) and (189.41,120.32) .. (189.44,121.64) .. controls (189.47,122.96) and (188.42,124.04) .. (187.1,124.04) .. controls (185.77,124.04) and (184.68,122.96) .. (184.66,121.64) -- cycle ;
%Shape: Circle [id:dp5826520867570033] 
\draw   (245.16,120.64) .. controls (245.13,119.32) and (246.18,118.25) .. (247.5,118.25) .. controls (248.82,118.25) and (249.91,119.32) .. (249.94,120.64) .. controls (249.97,121.96) and (248.92,123.04) .. (247.6,123.04) .. controls (246.27,123.04) and (245.18,121.96) .. (245.16,120.64) -- cycle ;
%Straight Lines [id:da34879255235429096] 
\draw    (372,19.5) -- (489,116.75) ;
%Straight Lines [id:da2241025831416119] 
\draw    (372,19.5) -- (281.5,118.25) ;
%Straight Lines [id:da9932788155620904] 
\draw    (356,38) -- (448,116.75) ;
%Straight Lines [id:da6928635194320738] 
\draw    (311.5,87) -- (341,118.25) ;
%Straight Lines [id:da1528383978734995] 
\draw    (334.5,60) -- (398,117.75) ;
%Shape: Circle [id:dp23433359596475833] 
\draw   (279.16,120.64) .. controls (279.13,119.32) and (280.18,118.25) .. (281.5,118.25) .. controls (282.82,118.25) and (283.91,119.32) .. (283.94,120.64) .. controls (283.97,121.96) and (282.92,123.04) .. (281.6,123.04) .. controls (280.27,123.04) and (279.18,121.96) .. (279.16,120.64) -- cycle ;
%Shape: Circle [id:dp40431397937245417] 
\draw   (338.66,120.64) .. controls (338.63,119.32) and (339.68,118.25) .. (341,118.25) .. controls (342.32,118.25) and (343.41,119.32) .. (343.44,120.64) .. controls (343.47,121.96) and (342.42,123.04) .. (341.1,123.04) .. controls (339.77,123.04) and (338.68,121.96) .. (338.66,120.64) -- cycle ;
%Shape: Circle [id:dp007060405995086949] 
\draw   (396.16,119.64) .. controls (396.13,118.32) and (397.18,117.25) .. (398.5,117.25) .. controls (399.82,117.25) and (400.91,118.32) .. (400.94,119.64) .. controls (400.97,120.96) and (399.92,122.04) .. (398.6,122.04) .. controls (397.27,122.04) and (396.18,120.96) .. (396.16,119.64) -- cycle ;
%Shape: Circle [id:dp43698512865257566] 
\draw   (486.66,119.14) .. controls (486.63,117.82) and (487.68,116.75) .. (489,116.75) .. controls (490.32,116.75) and (491.41,117.82) .. (491.44,119.14) .. controls (491.47,120.46) and (490.42,121.54) .. (489.1,121.54) .. controls (487.77,121.54) and (486.68,120.46) .. (486.66,119.14) -- cycle ;
%Shape: Circle [id:dp4343694939889162] 
\draw   (446.16,119.64) .. controls (446.13,118.32) and (447.18,117.25) .. (448.5,117.25) .. controls (449.82,117.25) and (450.91,118.32) .. (450.94,119.64) .. controls (450.97,120.96) and (449.92,122.04) .. (448.6,122.04) .. controls (447.27,122.04) and (446.18,120.96) .. (446.16,119.64) -- cycle ;

% Text Node
\draw (36.5,127.9) node [anchor=north west][inner sep=0.75pt]  [font=\tiny]  {$1$};
% Text Node
\draw (96,128.4) node [anchor=north west][inner sep=0.75pt]  [font=\tiny]  {$2$};
% Text Node
\draw (154,128.4) node [anchor=north west][inner sep=0.75pt]  [font=\tiny]  {$3$};
% Text Node
\draw (183.5,126.9) node [anchor=north west][inner sep=0.75pt]  [font=\tiny]  {$5$};
% Text Node
\draw (244,124.9) node [anchor=north west][inner sep=0.75pt]  [font=\tiny]  {$4$};
% Text Node
\draw (278,126.4) node [anchor=north west][inner sep=0.75pt]  [font=\tiny]  {$1$};
% Text Node
\draw (337.5,126.9) node [anchor=north west][inner sep=0.75pt]  [font=\tiny]  {$2$};
% Text Node
\draw (445.66,125.04) node [anchor=north west][inner sep=0.75pt]  [font=\tiny]  {$3$};
% Text Node
\draw (395,124.9) node [anchor=north west][inner sep=0.75pt]  [font=\tiny]  {$5$};
% Text Node
\draw (485.5,123.4) node [anchor=north west][inner sep=0.75pt]  [font=\tiny]  {$4$};

\end{tikzpicture}
    
    \caption{\textbf{A balanced (left) and a ladder (right) trees.}
    These represent the trees with the largest and smallest neighbourhoods in the rSPR graph, respectively.}
    \label{fig:bal_lad}
\end{figure}
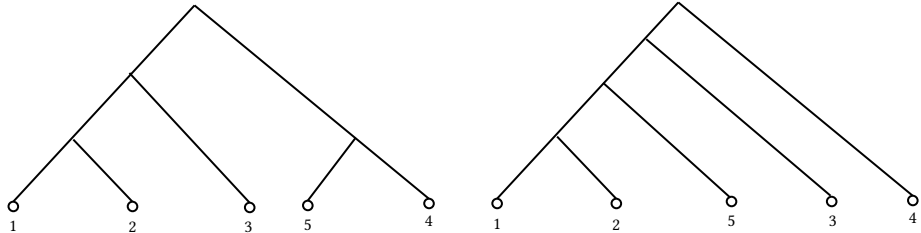

\medskip

\paragraph{Lumpability: }
% \begin{itemize}
%     \item Lumpability and quasi (or?) $\epsilon$-lumpability;
%     \item Aggregability;
%     \item 
% \end{itemize}
The preceding discussion makes clear that the space of trees is very high dimensional as $n$ increases, presenting considerable challenges when employing Markov Chain Monte Carlo (MCMC) methods to sample from probability distributions with support on  $\TT_n$.
One potential approach to address these challenges involves identifying lower-dimensional projections that either precisely or approximately retain the Markov property.
To achieve this, we will leverage the concept of \textit{lumpability} as outlined in~\cite{buchholz1994}.
One important application of lumpability was demonstrated in~\cite{lee2003}, where the authors employed this technique to develop a faster two-stage algorithm for computing a PageRank vector.

In a different context, as presented in~\cite{Simper2022}, the application of lumpability enabled the authors to demonstrate that an adjacent swap chain on labelled and unlabelled ranked tree shapes has a mixing time of at least $\mathcal{O}(n^3)$ and at most $\mathcal{O}(n^4)$, where $n$ represents the number of labels.
\cite{Simper2022} -- Lemma 4 therein-- prove that the spectral gap of the lumped chain is less than or equal to that of the original chain.
This finding is directly connected to comparing the relaxation times of the original and the lumped chains.

Before introducing the concept of lumpability, we will need the following definition.
Consider a Markov chain $(X_k)_{k \ge 0}$ on a finite state-space $\mathcal{S}$ and a partition of this space $\bar{S}$. Now, instead of looking at each iteration the process $(X_k)_{k \ge 0}$ is on the state-space $\mathcal{S}$, we observe in which partition the process is, thus having a \textit{projected process} $(Y_k)_{k \ge 0}$ on the state space $\bar{S}$.

\begin{definition}[\textbf{Lumpability}]
\label{def:lump}
Let $(X_k)_{k\geq 0}$  be Markov chain on a finite state-space $\mathcal{S} = \{f_1, f_2, \ldots, f_r\}$ and transition probabilities matrix $P$.
We say $(X_k)_{k\geq 0}$ is~\textbf{lumpable} with respect to a partition of $\bar{S} = \{E_1, E_2, \ldots, E_v\}$ of the state space if the projected chain $(Y_k)_{k \ge 0}$ on $\bar{S}$ is also a Markov chain. % for any $\mu_0$.
\end{definition}

Let us assume the same notation from Definition~\ref{def:lump}.
If $(X_k)_{k \ge 0}$ is lumpable with respect to the partition $\bar{S}$, by Definition~\ref{def:lump}, for any $x,y \in E_i$ we have 
\begin{equation*}
 \sum_{z \in E_j} p(x, z) = \sum_{z \in E_j} p(y, z)\,.
\end{equation*}
This is the so-called row sum criterion.

% \begin{remark}\label{rem:lump_cond}
% A necessary and sufficient condition for a Markov chain to be lumpable with respect to a given partition is the so-called row sum criterion: for $x, y \in E_i$ we must have
% \begin{equation*}
%  \sum_{z \in E_j} p(x, z) = \sum_{z \in E_j} p(y, z),
% \end{equation*}
% where $p(\cdot, \cdot)$ is the transition probability encoded by $\boldsymbol{P}$.
% \end{remark}

Now it is possible to derive an interesting result that relates the stationary distribution of the original process to the projected one (see Algorithm 3.1 in~\cite{Sumita1989}).
% From Remark~\ref{rem:lump_cond}, we derived an interesting result that relates the stationary distribution of the original process to the lumped induced one, see Proposition~\ref{prop: sta_lump}.

\begin{proposition}\label{prop: sta_lump}
Let $(X_k)_{k \ge 0}$ be an aperiodic and irreducible Markov Chain on finite state space $\mathcal{S}$ with stationary distribution $\pi_X$ and transition matrix $P$. Suppose that $(X_k)_{k \ge 0}$ is lumpable with respect to a partition $\bar{S}:=\{E_1, E_2, \dots, E_v\}$. The projected process $(Y_k)_{k \ge 0}$ on $\bar{S}$ has stationary distribution $\pi_Y$ and transition matrix $\Tilde{P}$ where $\pi_Y (E_i) = \sum_{x \in E_i} \pi_X (x)$ for all $i \in \{1,2, \dots, v\}$.   
\end{proposition}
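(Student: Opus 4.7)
The plan is to construct the candidate $\pi_Y$ by direct lumping, $\pi_Y(E_i) := \sum_{x \in E_i} \pi_X(x)$, and verify that this measure is invariant for the projected transition matrix $\tilde{P}$. By lumpability (via the row sum criterion stated right after Definition~\ref{def:lump}), the quantity $\tilde{p}(E_i, E_j) := \sum_{z \in E_j} p(x,z)$ is independent of the choice of representative $x \in E_i$, so $\tilde{P}$ is well defined and $(Y_k)_{k \ge 0}$ is a genuine Markov chain on $\bar S$. The invariance check is then a one-line manipulation: expand
\begin{equation*}
\sum_{i=1}^{v} \pi_Y(E_i)\,\tilde{p}(E_i, E_j)
= \sum_{i=1}^{v} \sum_{x \in E_i} \pi_X(x) \sum_{z \in E_j} p(x,z),
\end{equation*}
swap the order of summation, collapse the outer sum over the partition back to a sum over all of $\mathcal{S}$, and apply stationarity of $\pi_X$ to obtain $\sum_{z \in E_j} \pi_X(z) = \pi_Y(E_j)$.

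Next I would verify that $\pi_Y$ is a probability measure, which is immediate since summing $\pi_Y(E_i)$ over $i$ reconstructs $\sum_{x \in \mathcal{S}} \pi_X(x) = 1$, and that $\pi_Y$ is the unique invariant distribution. Uniqueness follows once $(Y_k)_{k \ge 0}$ is shown to be irreducible and aperiodic: irreducibility is inherited because any finite path $x = x_0 \to x_1 \to \cdots \to x_m = y$ in $\mathcal{S}$ with positive probability projects to a path $E_{i_0} \to E_{i_1} \to \cdots \to E_{i_m}$ of positive probability in $\bar{S}$; aperiodicity transfers similarly by choosing $x \in E_i$ and using that $p^{(k)}(x,x) > 0$ for all sufficiently large $k$ forces $\tilde p^{(k)}(E_i, E_i) > 0$ for those same $k$. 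Together with finiteness of $\bar S$, this gives the unique stationary distribution $\pi_Y$.

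The step with any real content is the invariance calculation; everything else is bookkeeping. The mild obstacle is being careful that one uses the row sum criterion in exactly the right place, namely to pull $\tilde p(E_i, E_j)$ inside the sum over $x \in E_i$ and identify it with $\sum_{z \in E_j} p(x,z)$ for each particular $x$, rather than for some fixed representative. Once that substitution is made legitimate by lumpability, the collapse $\sum_i \sum_{x \in E_i} = \sum_{x \in \mathcal{S}}$ and the stationarity of $\pi_X$ do all the work.
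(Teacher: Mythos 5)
Your proof is correct, and it takes a genuinely different route from the paper's. The paper verifies that $\pi_Y$ satisfies the \emph{detailed balance} equations for $\tilde{P}$: it writes $\pi_Y(E_i)\tilde{p}(E_i,E_j)$ as a double sum, applies the reversibility identity $\pi_X(x)p(x,z)=\pi_X(z)p(z,x)$ of the original chain termwise, and re-collapses using the row sum criterion on the other side to get $\pi_Y(E_j)\tilde{p}(E_j,E_i)$. That argument therefore relies on the original chain being reversible --- a hypothesis that holds for the two chains studied in the paper (Metropolis--Hastings and the lazy random walk) but is not stated in the proposition. Your argument instead verifies \emph{global balance} directly, using only stationarity of $\pi_X$ together with the row sum criterion to make $\tilde{p}(E_i,E_j)=\sum_{z\in E_j}p(x,z)$ substitutable for every $x\in E_i$, and then collapsing $\sum_i\sum_{x\in E_i}$ to $\sum_{x\in\mathcal{S}}$. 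This buys you the statement in the generality in which it is actually asserted (no reversibility needed), at the cost of not exhibiting the reversibility of the lumped chain, which the paper's computation gives as a byproduct. Your additional bookkeeping --- normalisation of $\pi_Y$, and inheritance of irreducibility and aperiodicity to guarantee uniqueness --- is sound; the irreducibility step is justified because $\tilde{p}(E_{i_k},E_{i_{k+1}})\ge p(x_k,x_{k+1})>0$ along any positive-probability path in $\mathcal{S}$, and the aperiodicity step because $\tilde{p}^{(k)}(E_i,E_i)\ge p^{(k)}(x,x)$ for the lumped $k$-step kernel. The paper handles uniqueness only implicitly via its standing aperiodicity and irreducibility assumptions, so your treatment is, if anything, more complete.
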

\begin{proof}
First one can note that $\sum_{i = 1}^v \pi_Y(E_i) = \sum_{x \in S} \pi_X (x)  = 1$, since $\bar{S}$ is a partition of $\mathcal{S}$.
 
%We define a set of positive integers $\{m_1, m_2, \dots, m_v \}$ such that each element of this set is the cardinality of an element of the partition, that is, $|Q_l| = m_l$ for all $l \in \{1,2, \dots, v\}$. 

Let us denote $|E_l| = m_l$ for all $l \in \{1,2, \dots, v \}$ and for any $E_i \in \bar{S}$, $E_i = \{z_1^i, z_2^i, \dots, z_{m_i}^i \}$ to conclude
\begin{equation}\label{eq: sta_lump_1}
\begin{split}
\pi_Y (E_i) \Tilde{p}(E_i, E_j) & = \sum_{l = 1}^{m_i} \pi_X (z_l^i) \Tilde{p}(E_i, E_j) = \sum_{l = 1}^{m_i} \sum_{h = 1}^{m_j} \pi_X (z_l^i) p(z_l^i, z_h^j) 
\\
& = \sum_{h = 1}^{m_j} \sum_{l = 1}^{m_i} \pi_X (z_h^j) p(z_h^j, z_l^i) = \sum_{h = 1}^{m_j} \pi_X (z_h^j) \Tilde{p}(E_j, E_i) = \pi_Y(E_j)\Tilde{p}(E_j, E_i) \,. 
\end{split}
\end{equation}
In the second equality in~\eqref{eq: sta_lump_1} we used lumpability property and Lemma 2.5 in~\cite{Levin2017}. We used the detailed balanced equation for the original process in the third equality in~\eqref{eq: sta_lump_1}.
Since $\sum_{i = 1}^K \pi_Y(Q_i) = 1$ and by~\eqref{eq: sta_lump_1} we see that the detailed balanced equation holds, hence we finish the proof. %see that the detailed balanced equation is respected then by Proposition 1.19 in~\cite{Levin2017} we have the desired result.
\end{proof}

%\comL{We need more 'meat' in this section; a few more comments on lumpability and aggregability and how and why they're important. Are there any other elementary results about lumpability that we need later and could describe here?} 

% \com{We can talk here
% \begin{itemize}
%     \item Result in~\cite{lee2003} uses lumping to construct a fast two-stage algorithm to compute a PageRank vector. (OK!) 
%     \item Result in~\cite{Simper2022}, uses lumping to find lower/upper bounds to relaxation time, but in another phylogenetic space tree. (OK!)
%     \item Our result obtained in Proposition 2, which is about the stationary distribution on the lumped chain.
% \end{itemize}}

\paragraph{$\varepsilon$-Lumpability for Markov chains:} Finding a suitable partition of the state space where the lumpability condition holds exactly is not always possible.
In such cases, a partition can be found where a certain \textit{lumping error} emerges, resulting in an "almost" Markovian process.
We define this concept as $\varepsilon$-\textit{lumpability}, with $\varepsilon$ being a positive constant that quantifies this error~\citep{buchholz1994}.
The idea of $\varepsilon$-lumpability is based on the principle that a Markov chain can be altered by relatively small perturbations of the transition probabilities, leading to a new resulting Markov chain that is lumpable. This approach allows for a controlled approximation of the original chain, facilitating analysis and computation.
This technique serves as a method to reduce the dimensionality of the problem and sometimes to obtain bounds for the stationary distribution of a Markov Chain, as discussed in~\cite{franceschinis1994} and~\cite{dayar1997}. %\com{add? Recent works have been developed cite the work Proportional lumpability and proportional bisimilarity}

\begin{definition}[\textbf{Lumping error and $\varepsilon$-lumpability} ]
Consider again a partition $ \bar{S} = \{E_1, \ldots,  E_v\}$ of $\mathcal{S}$.
For $x, y \in E_i$, define the \textbf{lumping error} as
\begin{equation}
\label{eq:lumping_error}
    R_{i,j}(x, y) = \sum_{z \in E_j} p(x, z) - \sum_{z \in E_j} p(y, z), \text{ for } i,j \in \{1,2, \dots, v\}\,.
\end{equation}
When $|R_{i,j}(x, y)| \leq \varepsilon$ for every pair $x, y$ and every $i, j \in \{1,2, \dots, v\}$, we say the Markov chain is $\varepsilon$-lumpable with respect to $\bar{S}$~\citep{Bittracher2021}.
\end{definition}

\subsubsection{Tree-valued Markov chains}
\label{sec:treeMC}

We now move on to describe two tree-valued Markov processes which we shall study in detail in this paper.
We start with a process whose stationary distribution is uniform on $\TT_n$, which is a common choice of prior distribution in Bayesian analysis~\citep{Alfaro2006}.
We then study a random walk with a non-uniform stationary distribution. 
Throughout we will denote a tree-valued process on $G_n$ by $(X_k)_{k \geq 0}$, meaning $X_k \in \TT_n$ for all $k \geq 0$.
Moreover, whenever there is no confusion, other operations such as conditional probabilities $\pr(X_{k + 1} = x \mid X_{k} = y)$ will also be implicitly defined on $\TT_n$ without explicit indexing on the number of leaves $n$.

\paragraph{SPR Metropolis-Hastings random walk: } We start with the so-called Metropolis-Hastings random walk, studied by, among others, \cite{Whidden2017}.
In this Markov chain, transitions between states (trees) occur proportional to the ratio of their SPR neighbourhoods.
Concretely, consider a process with the following transition probability for $x, y \in \TT_n$ 
\begin{equation}
\label{eq:MH_SPR_RW}
    p_{\text{MH}}(x, y) := \pr(X_{k+1} = y \mid X_{k} = x) = \begin{cases}
    \frac{1}{|N(x)|}\min\left\{1, \frac{|N(x)|}{|N(y)|} \right\}, y \in N(x),
    \\
    1 - \sum_{z \in N(x)} \frac{1}{|N(x)|}\min\left\{1, \frac{|N(x)|}{|N(z)|} \right\}, y = x
    \\
    0, y \notin N(x).
    \end{cases}
\end{equation}
This leads to a uniform stationary distribution, i.e., $\pi_{\text{MH}}(x) = 1/|\TT_n|$ for all $x \in \TT_n$ (see Example 3.3 in~\cite{Levin2017}).
% Lumpability with respect to a given clade $c$ thus entails \com{here I would put any number in the equations.}
% \begin{align}
%     \sum_{z \in S_0(c)}  \frac{1}{|N(x)|}\min\left\{1, \frac{|N(x)|}{|N(z)|} \right\} &= \sum_{z \in S_0(c)}  \frac{1}{|N(y)|}\min\left\{1, \frac{|N(y)|}{|N(z)|} \right\},\\
%     \frac{1}{|N(x)|}\sum_{u \in S_0(c)\cap N(x)}  \min\left\{1, \frac{|N(x)|}{|N(u)|} \right\} &= \frac{1}{|N(y)|}\sum_{v \in S_0(c)\cap N(y)}  \min\left\{1, \frac{|N(y)|}{|N(v)|} \right\}.
% \end{align}
% where $S_0(c) = \{t \in \TT_n: c \notin \mathcal{C}(t)\}$, i.e., $S_0(c)$ is the set of all trees that \textit{do not} contain the clade $c$. 

%\comL{Is there anything else we can/should say in preparation for the results that come later?}

\paragraph{SPR lazy random walk: } Now we explore a process whose invariant distribution is \textbf{not} the uniform on $\boldsymbol{T}_n$ the \textit{lazy} SPR-random walk for $\rho \in (0, 1)$, its transition probability is define as
\begin{equation}
\label{eq:simple_SPR_RW}
    p_{\text{RW}}(x, y) := \pr(X_{k+1} = y \mid X_{k} = x) = \begin{cases}
    (1-\rho)\frac{1}{|N(x)|}, \text{ if }y \in N(x),
    \\
    \rho, \text{ if } x=y,
    \\
    0, \text{otherwise}.
    \end{cases}
\end{equation}
It is straightforward to see that in this case the stationary distribution is given by
\begin{equation*}
    \pi_{\text{RW}}(x) = \frac{|N(x)|}{2|E_n|},
\end{equation*}
for all $x \in \TT_n$ (see Example 1.12 in~\cite{Levin2017}).
% In this case, (\ref{eq:lumpability_for_clades}) becomes:
% \begin{equation}
% \label{eq:lumpability_uniform_SPR}
%     \frac{|S_0(c) \cap N(x)|}{|N(x)|} = \frac{|S_0(c) \cap N(y)|}{|N(y)|}.
% \end{equation}
% There are also polynomial bounds on the size of (rooted) SPR neighbourhoods, as seen in the next result.

In this paper, we will refer to the two processes of interest simply as the Metropolis-Hastings and the lazy random walks, omitting the prefix 'SPR' for simplicity. These are the only processes considered throughout our analysis; additional specifications will be provided whenever necessary.

\subsubsection{The neighbourhood of a tree: a few combinatorial results}\label{sec:prelim2}

% Let $\boldsymbol{T}_n$ be the space of all rooted binary phylogenetic trees and let $\boldsymbol{C}_n$ be the space of all partitions of the leaf set $L$, i.e., let  $\boldsymbol{C}_n$ be the space of all clades on $n$ leaves.
% For $x \in \boldsymbol{T}_n$ let $C(x)$ be the set of (compatible) clades that compose $x$. 
% Fix a clade $c \in \boldsymbol{C}_n$ and consider the map $s_c : \boldsymbol{T}_n \to \{0, 1\}$ such that 
% \begin{equation*}
%     \label{eq:cladeindicator_def}
%     s_c(x) = \begin{cases}
%     1, \: \text{if} \: c \in C(x),\\
%     0, \: \text{otherwise}.
%     \end{cases}
% \end{equation*}
In this paper, we are concerned with the question of whether a Markov process $(X_k)_{k\geq 0}$ taking values in $\boldsymbol{T}_n$  retains its Markov property when projected onto the tree shape space or clade space, i.e., we would like to investigate whether the projected process $(Y^c_k)_{k\geq 0}$ on the partition $\bar{S}:=\{S_0(c), S_1(c)\}$ is Markovian.
Let $x$ be a tree in $\TT_n$, and let $w$ be a degree-3 vertex in $x$. We recall that $\gamma_x (w)$ is the number of degree-3 vertices on the path from $w$ to the root of $x$. According to Proposition 3.2 in~\cite{Song2003}, for any tree $x \in \TT_n$ with $n \ge 3$, it is possible to determine the number of neighbours of $x$ in the SPR-graph, denoted by $|N(x)|$.
\begin{proposition}[\textbf{SPR neighbourhood~\citep{Song2003}}]\label{prop_song_nei}
Let $n \ge 3$ and $x \in \TT_n$. Let $\{w_1, w_2, \dots, w_{n-2}\}$ be the set of degree-3 vertices in $x$. Then $|N(x)|$ is given by 
\begin{equation}\label{eq:song_nei}
    |N(x)| = 2(n-2)(2n-5) - 2\sum_{i=1}^{n-2} \gamma_x (w_i) \,.
\end{equation}
\end{proposition}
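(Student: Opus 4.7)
The plan is to enumerate the neighbours of $x$ by first counting all (cut-edge, regraft-position) pairs giving non-trivial moves and then correcting for overcounting, where two distinct cut-regraft pairs produce the same neighbour. I would proceed in three steps.

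First, I would decompose the $2n-2$ edges of $x$ into $n$ leaf edges (with exactly one leaf below) and $n-2$ internal edges, each of the form $e_{w_i}$ above a degree-3 vertex $w_i$ (with $L(w_i)$ leaves below). For any edge $e$ with $k_e$ leaves below, cutting $e$ and suppressing the degree-2 vertex that results yields a smaller tree with $2n-2k_e-2$ edges, providing $2n-2k_e-1$ regraft positions (each remaining edge plus the ``new root'' option). Exactly one of these reproduces $x$, so each edge contributes $2n-2k_e-2$ non-trivial candidate moves, giving
\begin{equation*}
M_{\text{raw}} = n(2n-4) + (n-2)(2n-2) - 2\sum_{i=1}^{n-2} L(w_i).
\end{equation*}

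Second, I would relate $L(w_i)$ to $\gamma_x(w_i)$ by double-counting pairs of nested degree-3 vertices. Since the subtree rooted at $w_i$ has $L(w_i)-1$ internal vertices, of which $w_i$ is one and the remaining $L(w_i)-2$ are degree-3 strict descendants, counting ordered pairs $(w, w')$ of degree-3 vertices with $w'$ strictly above $w$ in two ways yields $\sum_{i} \gamma_x(w_i) = \sum_i (L(w_i)-2)$, hence $\sum_i L(w_i) = \sum_i \gamma_x(w_i) + 2(n-2)$.

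Third, and most delicately, I would argue that the total overcounting equals exactly $4(n-2)$, independent of the shape of $x$. This step is the crux: two cut-regraft descriptions can yield the same neighbour when the move admits an equivalent redescription in which a different subtree is moved, typically when the operation interacts with the root (e.g.\ promoting a leaf or small subtree to outgroup can be realised either by a ``new root'' regraft of a leaf edge or by a cut of an edge incident to the root). A case analysis keyed on whether the cut edge is incident to the root and whether the regraft creates a new root partitions the coincidences and shows that the aggregate excess is exactly $4(n-2)$. Substituting this into $M_{\text{raw}}$, applying the identity from the second step, and simplifying the resulting polynomial in $n$ produces $2(n-2)(2n-5) - 2\sum_{i=1}^{n-2}\gamma_x(w_i)$, as claimed.

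The main obstacle is the third step: while the raw edge count and the algebraic relationship between $L(w_i)$ and $\gamma_x(w_i)$ are routine bookkeeping, establishing the uniform overcount $4(n-2)$ requires a careful enumeration of equivalent cut-regraft descriptions, tracking the three move types sketched in the preliminaries (internal rearrangements, root-incident cuts, and new-root regrafts) and verifying that the coincidence contributions cancel to a structure-independent constant.
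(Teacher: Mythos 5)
First, a point of reference: the paper does not actually prove this proposition --- it is imported verbatim as Proposition 3.2 of \cite{Song2003} --- so there is no in-paper argument to measure yours against, and you are in effect reproving a cited result. Judged on its own terms, your first two steps are correct and I can confirm the bookkeeping: a rooted binary tree on $n$ leaves has $n$ pendant edges and $n-2$ internal edges; cutting an edge with $k_e$ leaves below it leaves $2n-2k_e-1$ regraft positions, exactly one of which restores $x$ (the merged edge when the cut is not at the root, the new-root position when it is); and the double count of nested pairs of degree-3 vertices does give $\sum_i L(w_i) = \sum_i \gamma_x(w_i) + 2(n-2)$. Substituting, $M_{\mathrm{raw}} = 4n^2 - 14n + 12 - 2\sum_i \gamma_x(w_i)$, which exceeds the target $2(n-2)(2n-5) - 2\sum_i\gamma_x(w_i)$ by exactly $4(n-2)$; so the overcount you need is indeed $4(n-2)$, and the arithmetic is consistent.

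The gap is that your third step is not a proof but a restatement of what remains to be proved: given steps one and two, the proposition is \emph{equivalent} to the assertion that the aggregate multiplicity excess is the shape-independent constant $4(n-2)$, and you offer only a gesture toward a case analysis. Moreover, the heuristic you propose to key it on (root incidence and new-root regrafts) mislocates the coincidences, which occur at \emph{every} degree-3 vertex, not just near the root. Concretely, for each degree-3 vertex $p$ with children $a,s$ and parent $g$ whose other child is $t$, the neighbour obtained by the local rearrangement $((a,s),t)\mapsto(a,(s,t))$ admits \emph{three} cut--regraft descriptions --- prune $a$ and regraft on the edge above $g$; prune $t$ and regraft on the edge above $s$; prune $s$ and regraft on the edge above $t$ --- and symmetrically for $(s,(a,t))$. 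This yields $2(n-2)$ neighbours of multiplicity three, hence excess $4(n-2)$ provided one also shows that these neighbours are pairwise distinct and that \emph{every other} neighbour has multiplicity one (a check your sketch does not address at all; note that for $n=3$ every neighbour is of this triple type, which is why each of the two neighbours of $((1,2),3)$ arises three ways). Until that enumeration is carried out and closed off, the argument is incomplete; the pragmatic alternative is to do what the paper does and simply cite \cite{Song2003}.
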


From Proposition~\ref{prop_song_nei}, we can derive the exact formula to determine the tree configurations with the maximum and minimum numbers of neighbours, which are respectively termed the balanced tree and the ladder tree.
These findings are detailed in Corollary 4.2 of \cite{Song2003}.

\begin{corollary}[\textbf{SPR neighbourhood maximum and minimum~\citep{Song2003}}]\label{max_min_nei}
Let $n \ge 4$, then we have the following
\begin{equation*}
\begin{split}
 & \max_{x \in \TT_n} |N(x)| = 4(n - 2)^2 - 2 \sum_{j = 1}^{n-2} \lfloor \log_2 (j+1) \rfloor \quad \text{and}
 \\
& \min_{x \in \TT_n} |N(x)| = 3n^2 - 13n + 14 \,.
\end{split}    
\end{equation*}
\end{corollary}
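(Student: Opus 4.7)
The plan is to reduce the two extremal problems to classical facts about the total depth of internal nodes in a binary tree, using Proposition~\ref{prop_song_nei} as a starting point. First I would observe that for any non-root internal vertex $w$ of $x$, every strict ancestor of $w$ is itself a non-root internal vertex, hence has degree $3$, whereas the root is the unique vertex of degree $2$. Thus $\gamma_x(w) = d(w) - 1$, where $d(w)$ denotes the number of edges from $w$ to the root. Summing over the $n-2$ degree-$3$ vertices and substituting into~\eqref{eq:song_nei} rewrites the neighbourhood size as
\begin{equation*}
|N(x)| \;=\; 4(n-2)^2 \;-\; 2\,D(x), \qquad D(x) := \sum_{i=1}^{n-2} d(w_i).
\end{equation*}
The problem is then equivalent to minimising (for the max of $|N|$) and maximising (for the min of $|N|$) the total internal depth $D(x)$ over $x \in \TT_n$.

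For the maximum of $|N(x)|$ I would appeal to a level-filling argument: the internal vertices of $x$ form a rooted binary subtree on $n-1$ nodes, and the number of internal vertices at depth $d$ is at most $2^d$. A standard exchange argument (swap a deep internal vertex with a shallow leaf to strictly decrease $D$) shows that $D(x)$ is minimised exactly when the internal skeleton is a complete binary tree. In that configuration, the $j$-th internal vertex in breadth-first order sits at depth $\lfloor \log_2 j \rfloor$; dropping the root ($j=1$) and reindexing $k = j-1$ gives
\begin{equation*}
\min_{x \in \TT_n} D(x) \;=\; \sum_{k=1}^{n-2} \lfloor \log_2(k+1) \rfloor,
\end{equation*}
which, plugged into the rewriting above, yields the claimed maximum $4(n-2)^2 - 2\sum_{j=1}^{n-2} \lfloor \log_2(j+1) \rfloor$.

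For the minimum of $|N(x)|$ I would run the dual exchange argument: if $x$ is not a ladder, some internal vertex has two internal children, and detaching one subtree and regrafting it beneath the other strictly increases $D(x)$. Iterating produces the ladder, in which the degree-$3$ vertices form a single chain at depths $1, 2, \ldots, n-2$, so $D(x) = (n-2)(n-1)/2$. Substituting gives
\begin{equation*}
\min_{x \in \TT_n} |N(x)| \;=\; 4(n-2)^2 - (n-2)(n-1) \;=\; (n-2)(3n-7) \;=\; 3n^2 - 13n + 14.
\end{equation*}

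The main obstacle is making the two exchange arguments rigorous: one must produce an explicit local modification in each direction and verify it strictly changes $D(x)$ by a positive amount, and then argue that no extremal shape other than the complete binary tree (resp.\ the ladder) is stable under these moves. These are classical facts about external/internal path length of rooted binary trees, so the work is essentially a careful induction on $n$ rather than a conceptually novel step; the algebraic identities at the end are routine.
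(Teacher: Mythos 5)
The paper does not actually prove this statement: it is imported verbatim as Corollary~4.2 of \cite{Song2003}, so there is no internal proof to compare against. Your derivation from Proposition~\ref{prop_song_nei} is nevertheless correct and self-contained, and it is worth recording why. The identity $2(n-2)(2n-5) = 4(n-2)^2 - 2(n-2)$ together with $\gamma_x(w) = d(w)-1$ gives exactly $|N(x)| = 4(n-2)^2 - 2D(x)$ with $D(x)$ the total depth of the $n-2$ degree-$3$ vertices, and one checks on the $n=4$ ladder ($|N|=10$) that the convention ``$\gamma$ does not count $w$ itself'' is the one consistent with the corollary --- worth stating explicitly, since the paper's prose definition of $\gamma$ is ambiguous on this point. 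Two further details deserve a sentence each in a full write-up: (i) every rooted tree skeleton on $n-1$ nodes with at most two children per node is realised as the internal skeleton of some $x \in \TT_n$ (padding each skeleton node with $2-k$ leaf children produces exactly $\sum_v (2-k_v) = 2(n-1)-(n-2) = n$ leaves, so the extremisation over skeletons is legitimate); and (ii) for the ladder bound the exchange argument can be replaced by the cleaner observation that in any rooted tree the sorted depths satisfy $d_i \le i-1$ by connectivity, whence $D(x) \le (n-2)(n-1)/2$ with equality only for the path. With those points made, the level-filling bound $\min D = \sum_{j=1}^{n-2}\lfloor\log_2(j+1)\rfloor$ and the final algebra $(n-2)(3n-7) = 3n^2-13n+14$ are exactly as you state, so your proposal gives a complete elementary proof of a result the paper only cites.
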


The results presented in Proposition~\ref{prop_song_nei} and Corollary~\ref{max_min_nei} are crucial for our subsequent analysis, where we aim to calculate, within the neighbourhood of a given tree, the number of trees that possess a specific clade.

% See Lemma 5.1 in \cite{Whidden2017} for the expressions for $|N(x)|$.
% \begin{lemma}[\textbf{Bounds on the SPR neighbourhood~\citep{Whidden2017}}]
% \label{lemm:WM2017}
% Let $n\geq 3$ and take $x, y \in \boldsymbol{T}_n$ with $|N(x)| \leq |N(y)|$.
% Then
% \begin{itemize}
%     \item[(i)] $\frac{|N(x)|}{|N(y)|} \geq \frac{3n^2 - 13n + 14}{4n^2 - 18n + 20}$;
%     \item[(ii)] $|N(y)|-|N(x)| \leq n^2 -5n + 6$.
% \end{itemize}
% \end{lemma}
% See Lemma 5.2 of~\cite{Whidden2017} for a proof.
% The following lemma shows how one can decompose the number of neighbours of a tree $x$ as a convenient, which will be useful in the remainder of this section.
% \begin{lemma}[\textbf{Decomposition of the neighbourhood size}~\citep{Whidden2017}]
% \label{lemm:neighbourhood_sum}
% Let $N(x , u)$ denote the neighbours of $x \in \boldsymbol{T}_n$ assigned to node $u$, obtained by moving the subtree $r$ rooted at node $u$ and $j$ be the number of nodes in $r$.
% Then 
% \begin{equation*}
% |N(x,u)| =
% \begin{cases}
% 2n-(j+5), \text{if}\: \operatorname{depth}(u) > 1,\\
% 2n-(j+3), \text{if}\: \operatorname{depth}(u) = 1,\\
% 0, \text{if}\: \operatorname{depth}(u) \leq 0,
% \end{cases}
% \end{equation*}
% and 
% \begin{equation}
%     \label{eq:neighbourhood_decomposition}
%     |N(x)| = \sum_{u \in x} |N(x, u)|.
% \end{equation}
% \end{lemma}
% See Lemma 3.2 of~\cite{Whidden2017} for proof.

In this section, we introduce a collection of auxiliary results that provide insights into the neighbourhood of a tree $x \in S_1(c)$.
With that aim, we now introduce several definitions.
For any tree $x \in \TT_n$, let $\phi_c^x$ denote the subtree generated by the degree-3 vertices in $x$, which serves as the least common ancestor only of the elements of $c$.
We define a map $f_c: S_1(c) \to \boldsymbol{T}_k$, where $k := n - |c| + 1$, that transforms the subtree corresponding to the clade $c$ into a leaf $l$, resulting in a label set in $\TT_k$ of $\{1,2, \dots, n\}/c \cup \{l\}$. Refer to Figure~\ref{fig:A_1^x,c} for an illustrative depiction. Furthermore, we denote by $g_{c}^x: \boldsymbol{T}_k \to S_1(c)$ a map, that replaces the leaf $l$ with $\phi_c^x$, effectively reversing the operation performed by $f_c$.

\begin{figure}[h]
    \centering
    \tikzset{every picture/.style={line width=0.68pt}} %set default line width to 0.75pt    

\begin{tikzpicture}[x=0.68pt,y=0.68pt,yscale=-1,xscale=1]
%uncomment if require: \path (0,300); %set diagram left start at 0, and has height of 300

%Straight Lines [id:da7141780308243049] 
\draw    (268.84,28.75) -- (338.47,99.45) ;
%Straight Lines [id:da5763218764406648] 
\draw    (268.84,28.75) -- (205.16,97.74) ;
%Straight Lines [id:da29359083550910614] 
\draw    (242.52,57.71) -- (263.75,74.75) ;
%Straight Lines [id:da6766033720134381] 
\draw    (221.29,79.86) -- (240.82,96.89) ;
%Shape: Triangle [id:dp7436284237832753] 
\draw   (205.16,97.74) -- (218.32,117.33) -- (192,117.33) -- cycle ;
%Shape: Triangle [id:dp5071734873301414] 
\draw   (240.82,96.89) -- (253.98,116.48) -- (227.66,116.48) -- cycle ;
%Shape: Triangle [id:dp38253883942979017] 
\draw   (263.75,74.75) -- (276.91,94.34) -- (250.59,94.34) -- cycle ;
%Shape: Triangle [id:dp7912966854966368] 
\draw   (338.47,99.45) -- (351.63,119.04) -- (325.31,119.04) -- cycle ;
%Curve Lines [id:da5286007691613468] 
\draw [color={black}  ,draw opacity=1 ] [dash pattern={on 4.5pt off 4.5pt}]  (292.19,119.25) .. controls (301.96,87.31) and (323.19,57.07) .. (358,100.51) ;
%Straight Lines [id:da010937142883857609] 
\draw    (153.84,140.75) -- (223.47,211.45) ;
%Straight Lines [id:da5980336367170462] 
\draw    (153.84,140.75) -- (90.16,209.74) ;
%Straight Lines [id:da909920150015229] 
\draw    (127.52,169.71) -- (148.75,186.75) ;
%Straight Lines [id:da901431154074462] 
\draw    (106.29,191.86) -- (125.82,208.89) ;
%Shape: Triangle [id:dp40334026956041824] 
\draw   (90.16,209.74) -- (103.32,229.33) -- (77,229.33) -- cycle ;
%Shape: Triangle [id:dp12714894164294588] 
\draw   (125.82,208.89) -- (138.98,228.48) -- (112.66,228.48) -- cycle ;
%Shape: Triangle [id:dp7596040394725025] 
\draw   (148.75,186.75) -- (161.91,206.34) -- (135.59,206.34) -- cycle ;
%Shape: Circle [id:dp7806530390777162] 
\draw   (218.85,216.07) .. controls (218.85,213.52) and (220.92,211.45) .. (223.47,211.45) .. controls (226.02,211.45) and (228.1,213.52) .. (228.1,216.07) .. controls (228.1,218.63) and (226.02,220.7) .. (223.47,220.7) .. controls (220.92,220.7) and (218.85,218.63) .. (218.85,216.07) -- cycle ;
%Shape: Triangle [id:dp11743975344723512] 
\draw   (389.47,175.95) -- (402.63,195.54) -- (376.31,195.54) -- cycle ;
%Straight Lines [id:da6100908131005125] 
\draw    (389.47,165.2) -- (389.47,175.95) ;
%Left Arrow [id:dp7918110723250189] 
\draw   (194.15,155.14) -- (198.45,141.69) -- (200.87,144.52) -- (214.57,132.82) -- (219.4,138.47) -- (205.7,150.17) -- (208.11,153) -- cycle ;
%Left Arrow [id:dp29641362186558595] 
\draw   (364.29,159.1) -- (350.2,158.19) -- (352.35,155.16) -- (337.68,144.71) -- (342,138.65) -- (356.67,149.11) -- (358.83,146.08) -- cycle ;
%Shape: Circle [id:dp4063726556965219] 
\draw   (392.47,162.2) .. controls (392.47,160.54) and (391.13,159.2) .. (389.47,159.2) .. controls (387.81,159.2) and (386.47,160.54) .. (386.47,162.2) .. controls (386.47,163.85) and (387.81,165.2) .. (389.47,165.2) .. controls (391.13,165.2) and (392.47,163.85) .. (392.47,162.2) -- cycle ;

% Text Node
\draw (333.74,105.07) node [anchor=north west][inner sep=0.75pt]  [font=\scriptsize]  {$c$};
% Text Node
\draw (218.74,224.57) node [anchor=north west][inner sep=0.75pt]  [font=\scriptsize]  {$l$};
% Text Node
\draw (385.74,181.57) node [anchor=north west][inner sep=0.75pt]  [font=\scriptsize]  {$c$};
% Text Node
\draw (253,13.9) node [anchor=north west][inner sep=0.75pt]  [font=\footnotesize]  {$x\ \in \ \boldsymbol{T}_{n}$};
% Text Node
\draw (115,123.4) node [anchor=north west][inner sep=0.75pt]  [font=\footnotesize]  {$x^{\prime } \ \in \ \boldsymbol{T}_{n-|c|+1}$};
% Text Node
\draw (371.5,136.9) node [anchor=north west][inner sep=0.75pt]  [font=\footnotesize]  {$\phi _{c}^{x} \ \in \ \boldsymbol{T}_{|c|}$};

\end{tikzpicture}

\bigskip
    \caption{\textbf{A description of the map $f_c$ acting on a tree $x \in \TT_n$.}
    It is possible to see in left the tree $x' \in \TT_{n -|c| +1}$ and the tree with the clade $c$, $\phi_c^x \in \TT_{|c|}$.}
    \label{fig:A_1^x,c}

\end{figure}
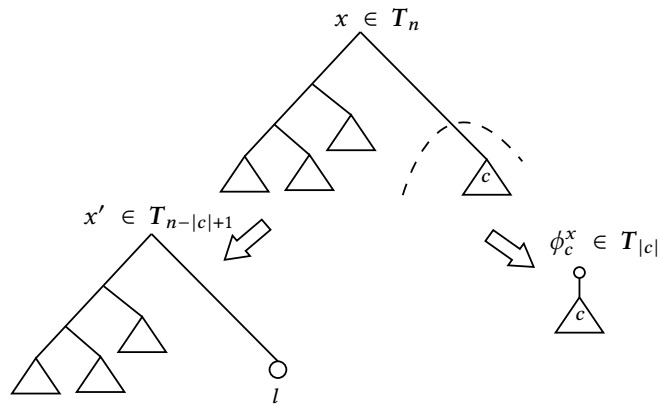

Let $x$ be a tree in $\TT_n$ such that $x \in S_1(c)$, and define $x' = f_c(x) \in \TT_k$. We then introduce $h_{x'}: \TT_{|c|} \to \TT_n$ as a transformation that eliminates the virtual root from the tree in $\TT_n$ and incorporates this tree into $x'$ by substituting the leaf $l$ (see Figure~\ref{fig:A_1^x,c}).

% \begin{lemma}\label{lem:preserve_d2}
% For all $\phi_1, \phi_2 \in \TT_{|c|}$, with $d_{r-SPR}(\phi_1, \phi_2) = k$ we obtain that $d_{r-SPR}(h_{x^\prime}(\phi_1), h_{x^\prime}(\phi_2)) = k$, where $k \in \mathbb{Z}_{\ge 0}$. 
% \end{lemma}

% \begin{proof}
% If $d_{r-SPR}(\phi_1, \phi_2) = k$ then we have $d_{r-SPR}(h_{x^\prime}(\phi_1), h_{x^\prime}(\phi_2)) \ge k$. 

% Suppose that
% $d_{r-SPR}(h_{x^\prime}(\phi_1), h_{x^\prime}(\phi_2)) > k$. 

% However, $d_{r-SPR}(h_{x^\prime}(\phi_1), h_{x^\prime}(\phi_2)) > k$ is a contradiction because the new nodes and edges added in $\phi_1$ and $\phi_2$ have the same configuration. Thus the only difference would be in the subtrees $\phi_1$ and $\phi_2$. 

% Hence $d_{r-SPR}(h_{x^\prime}(\phi_1), h_{x^\prime}(\phi_2)) = k$. 

% \end{proof}

For a tree $x \in S_1(c)$, we define $A_1^{x,c}$ as the set of trees that contain the clade and are neighbours of $x$, that is, $A_1^{x,c} := S_1(c) \cap N(x)$. Conversely, we define $A_0^{x,c} := N(x) \setminus A_1^{x,c}$ as its complement. 
The next result elucidates the cardinality of the set $A_1^{x,c}$.

\begin{lemma}[\textbf{The neighbourhood of tree containing a clade.}]
\label{lem:card_A_1^xc}
If $x \in S_1 (c)$ then $|A_1^{x,c}| = |N(f_c(x´))| +|N(\phi_c^x)|$.
\end{lemma}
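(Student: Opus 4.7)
The plan is to partition $A_1^{x,c}$ into two disjoint subsets according to how each rSPR operation interacts with the clade subtree $\phi_c^x$, and then to match each subset bijectively with one of the neighbourhood sets on the right-hand side. Throughout, I write $x' = f_c(x) \in \TT_{k}$, where $k = n - |c| + 1$, and think of $\phi_c^x$ as an element of $\TT_{|c|}$.

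First I would make the partition explicit. For any $y \in A_1^{x,c}$, encode the rSPR producing $y$ from $x$ by the pruned edge $e_{\mathrm{cut}}$ and the regraft edge $e_{\mathrm{reg}}$. Let $B_1 \subset A_1^{x,c}$ collect the $y$ for which $e_{\mathrm{cut}}$ and $e_{\mathrm{reg}}$ both lie strictly inside $\phi_c^x$, and let $B_2$ collect those for which the rSPR does not alter the internal structure of $\phi_c^x$ -- equivalently, either $e_{\mathrm{cut}}$ is the edge immediately above $\phi_c^x$ (the whole clade is pruned and reattached elsewhere), or both $e_{\mathrm{cut}}$ and $e_{\mathrm{reg}}$ sit outside $\phi_c^x$. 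The crucial claim is that $A_1^{x,c} = B_1 \sqcup B_2$: any rSPR whose pruned subtree contains some but not all leaves of $c$, or which regrafts a non-clade subtree onto an edge interior to $\phi_c^x$, introduces or removes leaves from the subtree rooted at the root of $\phi_c^x$, and thus destroys the clade $c$, placing its image in $A_0^{x,c}$.

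Next I would construct the bijection $B_1 \leftrightarrow N(\phi_c^x)$. An rSPR whose cut and regraft are internal to $\phi_c^x$ can be read as an rSPR performed on $\phi_c^x$ regarded as a tree in $\TT_{|c|}$, returning some $z \in N(\phi_c^x)$; the resulting tree $y \in B_1$ agrees with $x$ outside the clade and satisfies $\phi_c^y = z$. Conversely, every $z \in N(\phi_c^x)$ lifts back to a unique $y \in B_1$ by substituting $z$ for $\phi_c^x$ in the surrounding tree (the inverse of taking the clade subtree).

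Then I would construct the bijection $B_2 \leftrightarrow N(x')$. An rSPR on $x'$ either involves the special leaf $l$ or not. If it involves $l$, applying $g_c^x$ replaces $l$ by $\phi_c^x$ in the resulting tree, which is precisely the effect on $x$ of pruning the entire clade subtree and regrafting it at the corresponding edge; if $l$ is not involved, the same $g_c^x$-substitution realises the rSPR as an operation entirely outside $\phi_c^x$ in $x$. In both sub-cases the clade $c$ is preserved, so we land in $B_2$. Conversely, any $y \in B_2$ leaves $\phi_c^x$ intact, so collapsing the clade via $f_c$ turns the rSPR $x \to y$ into a well-defined rSPR on $x'$, giving the inverse map. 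Summing the two cardinalities, $|A_1^{x,c}| = |B_1| + |B_2| = |N(\phi_c^x)| + |N(x')|$, which is the stated identity.

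The main obstacle will be the boundary-crossing argument used to obtain $A_1^{x,c} = B_1 \sqcup B_2$: one must verify, case by case over the three rSPR types (i)--(iii) of Figure~\ref{fig:spr_des}, that every operation with one endpoint inside and one outside $\phi_c^x$ breaks the clade, and one must separately check that the root-involving operations applied to the edge above $\phi_c^x$ are correctly counted once in $B_2$ via their $x'$-level counterparts without being double-counted or mistakenly attributed to $B_1$.
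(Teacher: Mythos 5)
Your proposal is correct and follows essentially the same route as the paper: the paper likewise identifies $A_1^{x,c}$ with the disjoint union of the image of $N(\phi_c^x)$ under the substitution map $h_{x'}$ (your $B_1$) and the image of $N(f_c(x))$ under $g_{\phi_c^x}$ (your $B_2$), checks that both maps are injective and land in $A_1^{x,c}$, and then argues by contradiction that any clade-preserving neighbour must arise one of these two ways because a boundary-crossing rSPR destroys $c$. The decomposition, the bijections, and the exhaustiveness argument you outline all match the paper's proof.
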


The proof of Lemma~\ref{lem:card_A_1^xc} will be postponed to  Appendix~\ref{sec:app_proofs}.

From Lemma~\ref{lem:card_A_1^xc}, we can determine the number of neighbours of $x \in S_1(c)$ that lack the clade $c$. This calculation requires only the number of degree-3 vertices along the path from the least common ancestor of the clade to the root.
We denote this least common ancestor by $w_c$.
We can now obtain Lemma~\ref{lem:A_0}, which counts the number of neighbours of a tree $x \in S_1(c)$ which do not have $c$ as a constituent clade.
\begin{lemma}[\textbf{Counting the neighbours that do not share a clade $c$ with $x \in S_1(c)$.}]
\label{lem:A_0}
If $n \ge 4$ and $x \in S_1(c)$ then  %$|A_0^{x,c}| = -8|c|^2 + 8|c|n - 2\gamma_{x}(v_c)(|c|-1) + 6|c| -8n-2 $ for $|c| \ge 3$, and $|A_0^{x,c}| = 8n-22 - 2\gamma_x(v_c)$, for $|c|=2$.
\begin{equation*}
  |A_0^{x,c}| = \begin{cases}
   -8|c|^2 + 8|c|n - 2\gamma_{x}(w_c)(|c|-1) + 6|c| -8n-2\,, \text{ for } |c| \ge 3
   \\
    8n-22 - 2\gamma_x(w_c)\,, \text{ for } |c| = 2\,.
  \end{cases}  
\end{equation*}

\end{lemma}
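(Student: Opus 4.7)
The plan is to compute $|A_0^{x,c}|$ as $|N(x)| - |A_1^{x,c}|$ using Lemma~\ref{lem:card_A_1^xc}, which gives $|A_1^{x,c}| = |N(f_c(x))| + |N(\phi_c^x)|$. Since the three tree sizes involved, namely $n$, $k := n-|c|+1$, and $|c|$, each admit Song's expression from Proposition~\ref{prop_song_nei}, the entire computation reduces to relating the three $\gamma$-sums
\[
\sum_{w \in V(x)} \gamma_x(w), \quad \sum_{w \in V(f_c(x))} \gamma_{f_c(x)}(w), \quad \sum_{w \in V(\phi_c^x)} \gamma_{\phi_c^x}(w),
\]
where $V(\cdot)$ denotes the set of degree-$3$ vertices.

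The key combinatorial identity to establish is
\[
\sum_{w \in V(x)} \gamma_x(w) = \sum_{w} \gamma_{f_c(x)}(w) + \sum_{w} \gamma_{\phi_c^x}(w) + (|c|-2) + (|c|-1)\gamma_x(w_c).
\]
To prove it, I would partition the $n-2$ degree-$3$ vertices of $x$ into three classes: (i) the $|c|-2$ degree-$3$ vertices lying strictly below $w_c$ inside $\phi_c^x$; (ii) the vertex $w_c$ itself; and (iii) the $n-|c|-1$ degree-$3$ vertices lying outside $\phi_c^x$, which are in canonical bijection with the degree-$3$ vertices of $f_c(x)$. For a vertex $w$ of type (i), the path from $w$ to the root of $x$ traverses $w_c$ and then every degree-$3$ vertex above it, so $\gamma_x(w) = \gamma_{\phi_c^x}(w) + 1 + \gamma_x(w_c)$, since inside $\phi_c^x$ the vertex $w_c$ serves as the (degree-$2$) root and is not counted. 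Vertices of type (iii) satisfy $\gamma_x(w) = \gamma_{f_c(x)}(w)$ because their upward path in $x$ avoids $\phi_c^x$ entirely and coincides with the corresponding upward path in $f_c(x)$, while $w_c$ itself contributes exactly $\gamma_x(w_c)$. Summing over the three classes yields the identity, with the constant $(|c|-2)$ coming from the ``$+1$'' contributions across type (i) and the coefficient $(|c|-1)$ arising from those $|c|-2$ vertices plus $w_c$.

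With the identity in hand, I would substitute Proposition~\ref{prop_song_nei} applied to $x$, $f_c(x)$, and $\phi_c^x$ into $|A_0^{x,c}| = |N(x)| - |N(f_c(x))| - |N(\phi_c^x)|$. The $\gamma$-sum terms cancel up to the explicit correction $(|c|-2) + (|c|-1)\gamma_x(w_c)$, leaving only the polynomial
\[
2(n-2)(2n-5) - 2(|c|-2)(2|c|-5) - 2(k-2)(2k-5) - 2(|c|-2) - 2(|c|-1)\gamma_x(w_c),
\]
with $k = n-|c|+1$. Collecting terms in $n^2$, $n|c|$, $|c|^2$, $n$, $|c|$, and constants yields the announced expression for $|c| \ge 3$. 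The case $|c|=2$ is handled separately because Proposition~\ref{prop_song_nei} requires at least three leaves: here $\phi_c^x$ is the unique element of $\boldsymbol{T}_2$ with $|N(\phi_c^x)| = 0$ and no type-(i) vertices, so the identity degenerates to $\sum_w \gamma_x(w) = \sum_w \gamma_{f_c(x)}(w) + \gamma_x(w_c)$, and a direct calculation with $k = n-1$ produces $|A_0^{x,c}| = 8n - 22 - 2\gamma_x(w_c)$. I expect the type-(i) bookkeeping, namely distinguishing $w_c$'s role as a degree-$3$ vertex in $x$ versus a degree-$2$ root in $\phi_c^x$, to be the main pitfall; the final algebraic simplification, while lengthy, is routine.
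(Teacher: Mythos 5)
Your proposal is correct and follows essentially the same route as the paper: it computes $|A_0^{x,c}| = |N(x)| - |N(f_c(x))| - |N(\phi_c^x)|$ via Lemma~\ref{lem:card_A_1^xc}, applies Proposition~\ref{prop_song_nei} to all three trees, and reduces everything to the same $\gamma$-sum identity (your three-way vertex partition is just a rearranged, somewhat more explicitly justified form of the relation in equation~\eqref{eq:|Nx'|} of the paper). The separate treatment of $|c|=2$ with $|N(\phi_c^x)|=0$ and the degenerate identity $\sum_w \gamma_x(w) = \sum_w \gamma_{f_c(x)}(w) + \gamma_x(w_c)$ also matches the paper's argument.
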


\begin{proof}
We start with $|c| \ge 3$. Let $x^\prime = f_c(x)$. Then by Proposition~\ref{prop_song_nei}, since $x^\prime \in \TT_k$, with $k = n- |c| +1$, we obtain 
\begin{equation*}
|N(x^\prime)| = 2(n-|c|-1)(2(n-|c|+1)-5) - 2\sum_{i=1}^{k-2} \gamma_{x^\prime}(w_i)   
\end{equation*}

Moreover,
\begin{equation}\label{eq:|Nx'|}
\begin{split}
\sum_{i=1}^{k-2} & \gamma_{x^\prime}(w_i) =  \left( \sum_{i=1}^{n-2} \gamma_x (w_i) - \left( \gamma_x (w_c) + \sum_{j=1}^{|c|-2} (\gamma_x (w_c) +1 + \gamma_{\phi_c^x}(w_j)) \right)\right)
\\
& = \left( \sum_{i=1}^{n-2} \gamma_x (w_i) - (|c|-1)\gamma_x (w_c) - |c|+2 - \sum_{j=1}^{|c|-2} \gamma_{\phi_c^x}(w_j)  \right)\,.
\end{split}
\end{equation}
The second equality in~\eqref{eq:|Nx'|} we obtain by the fact that $x^\prime = f_c(x)$, hence the structure of the tree is maintained, except, of course, the clade $c$, which is replaced by the leaf $l$.

Thus, by Proposition~\ref{prop_song_nei}, we obtain
\begin{equation}\label{eq:A_0^x_geral}
\begin{split}
& |N(x)| - |N(x^\prime)| - |N(\phi_c^x)| = 
\\
& = 2(n - 2)(2n -5) -2\sum_{i=1}^{n-2} \gamma_x (w_i) - |N(x^\prime)| - 2(c-2)(2c-5) + 2\sum_{j=1}^{|c|-2} \gamma_{\phi_c^x} (w_j) 
\\
& = -8|c|^2 + 8|c|n - 2\gamma_{x}(w_c)(|c|-1) + 6|c| -8n-2\,.
\end{split}
\end{equation}
The last equality in~\eqref{eq:A_0^x_geral} is true by the fact that $|N(x)| = |A_0^{x,c}| + |A_1^{x,c}|$, and by Lemma~\ref{lem:card_A_1^xc}. Hence we finish the proof for $|c| \ge 3$.

Now we will prove the case $|c| = 2$. First observe that with $|c| = 2$ we have $|N(\phi_c^x)| = 0$. By Proposition~\ref{prop_song_nei}, since $x^\prime \in \TT_k$, with $k = n-1$, and using the same techniques employed in equation~\eqref{eq:|Nx'|}, we obtain
\begin{equation}\label{eq:Nx'_2}
\begin{split}
|N(x^\prime)| & = 2(n-3)(2(n-1)-5) - 2\sum_{i=1}^{n-3} \gamma_{x^\prime}(w_i)
\\
& = 2(n-3)(2(n-1)-5) - 2 \left( \sum_{i=1}^{n-2} \gamma_x (w_i) - \gamma_x (w_c) \right)
\end{split} 
\end{equation}

Hence, by Proposition~\ref{prop_song_nei}  and equation~\eqref{eq:Nx'_2} we find
\begin{equation*}
\begin{split}
|A_0^{x,c}| & = |N(x)|-|N(x^\prime)|
\\
& = 2(n - 2)(2n -5) -2\sum_{i=1}^{n-2} \gamma_x (w_i) - 2(n-3)(2(n-1)-5) + 2 \left( \sum_{i=1}^{n-2} \gamma_x (w_i) - \gamma_x (w_c) \right)
\\
& = 8n-22 - 2\gamma_x(w_c) \,,
\end{split}
\end{equation*}
concluding the proof.
\end{proof}

In our subsequent result, we identify the tree $x$ in $\TT_n$ that exhibits the maximum ratio of the number of neighbours that have the clade $c$ to the total number of neighbours of $x$. Furthermore, we compute this ratio (refer to Figure~\ref{fig:max_A_0}).

\begin{figure}[h]
    \centering
    \tikzset{every picture/.style={line width=0.75pt}} %set default line width to 0.75pt        

\begin{tikzpicture}[x=0.75pt,y=0.75pt,yscale=-1.5,xscale=1.5]
%uncomment if require: \path (0,313); %set diagram left start at 0, and has height of 313

%Straight Lines [id:da652046460948305] 
\draw    (241,41) -- (289.5,90.25) ;
%Shape: Triangle [id:dp531980730307188] 
\draw   (289.5,90.25) -- (302.63,111) -- (276.38,111) -- cycle ;
%Straight Lines [id:da4941973081496518] 
\draw    (241,41) -- (152.5,109.75) ;
%Straight Lines [id:da19481731809254677] 
\draw    (209.5,65.5) -- (252,110.25) ;
%Straight Lines [id:da5622991487281588] 
\draw    (196.75,75.38) -- (230.5,111.75) ;
%Straight Lines [id:da30065476815706393] 
\draw    (168.75,98.88) -- (180.5,111.25) ;
%Straight Lines [id:da6966871441439946] 
\draw  [dash pattern={on 0.84pt off 2.51pt}]  (153.5,120.25) -- (254,120.25) ;
\draw [shift={(256,120.25)}, rotate = 180] [color={rgb, 255:red, 0; green, 0; blue, 0 }  ][line width=0.75]    (10.93,-3.29) .. controls (6.95,-1.4) and (3.31,-0.3) .. (0,0) .. controls (3.31,0.3) and (6.95,1.4) .. (10.93,3.29)   ;
\draw [shift={(151.5,120.25)}, rotate = 0] [color={rgb, 255:red, 0; green, 0; blue, 0 }  ][line width=0.75]    (10.93,-3.29) .. controls (6.95,-1.4) and (3.31,-0.3) .. (0,0) .. controls (3.31,0.3) and (6.95,1.4) .. (10.93,3.29)   ;
%Straight Lines [id:da049806922828038] 
\draw [line width=0.75]  [dash pattern={on 0.84pt off 2.51pt}]  (199,94) -- (189,99.25) ;

% Text Node
\draw (284.88,98.65) node [anchor=north west][inner sep=0.75pt]  [font=\small]  {$\phi _{c}^{x}$};
% Text Node
\draw (199,124.9) node [anchor=north west][inner sep=0.75pt]  [font=\scriptsize]  {$n-|c|\ \text{leaves}$};

\end{tikzpicture}

\caption{\textbf{A tree for which the maximum number of neighbours share a clade $c$}.
Observe that if we denote this tree by $x \in \TT_n$, then the $f_c(x)$ is a ladder tree in $\TT_{n-|c|+1}$.}
    \label{fig:max_A_0}
\end{figure}
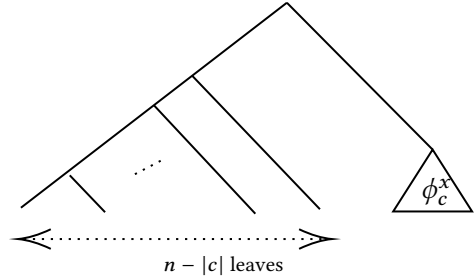

\begin{lemma}[\textbf{The tree which maximises the proportion of neighbours with a clade.}]
\label{lem:max_A_N}
Let $x \in S_1(c)$ be such that the $\phi_c^x$ is a ladder tree and a direct descendent of the root and the subtree generated by the other direct descendant from the root is also a ladder tree. Then for all $z \in S_1(c)$ we have
\begin{equation*}
\frac{|A_0^{x,c}|}{|N(x)|}  \ge \frac{|A_0^{z,c}|}{|N(z)|} \,, \quad \text{and} 
\end{equation*}
\begin{equation*}
\frac{|A_0^{x,c}|}{|N(x)|}  =\begin{cases}
  \dfrac{-8|c|^2 + 8|c|n + 6|c| -8n-2}{3n^2 -2|c|^2 +2|c|n-15n+16} \quad \text{for } |c| \ge 3 
  \vspace{10pt}\\
  \dfrac{8n-22}{3n^2 -11n +8} \quad \text{for } |c|=2\,.
  \end{cases}
\end{equation*}
\end{lemma}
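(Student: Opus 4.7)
The plan is to reduce the optimisation to three independent sub-optimisations, one for each ``component'' of a tree $z \in S_1(c)$, and then to verify that the three partial optima are simultaneously attained by the tree $x$ in the statement.

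First, I would decompose the ratio. Since $|N(z)| = |A_0^{z,c}| + |A_1^{z,c}|$ by the very definition of $A_0^{z,c}$ and $A_1^{z,c}$, and $|A_1^{z,c}| = |N(f_c(z))| + |N(\phi_c^z)|$ by Lemma~\ref{lem:card_A_1^xc}, one can write
\[
\frac{|A_0^{z,c}|}{|N(z)|} \;=\; \frac{A(z)}{A(z) + B(z) + D(z)},
\]
with $A(z) := |A_0^{z,c}|$, $B(z) := |N(f_c(z))|$, and $D(z) := |N(\phi_c^z)|$. All three quantities are non-negative, so the right-hand side is non-decreasing in $A$ and non-increasing in each of $B$ and $D$. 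It will therefore suffice to show that $x$ simultaneously maximises $A$ and minimises $B$ and $D$ over $S_1(c)$.

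Next, I would identify the three partial optima. By Lemma~\ref{lem:A_0}, $A(z)$ is an affine function of $\gamma_z(w_c)$ with strictly negative slope (in both cases $|c|=2$ and $|c|\ge 3$), hence is maximised exactly when $\gamma_z(w_c) = 0$, that is, when $w_c$ is a direct descendant of the root of $z$. By Corollary~\ref{max_min_nei} applied in $\TT_{n-|c|+1}$, $B(z) = |N(f_c(z))|$ is minimised precisely when $f_c(z)$ is a ladder tree; the analogous application in $\TT_{|c|}$ shows that $D(z) = |N(\phi_c^z)|$ is minimised when $\phi_c^z$ is a ladder tree (a vacuous requirement when $|c| \in \{2,3\}$, since those spaces contain only one shape). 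The tree $x$ described in the statement realises all three optima at once: $\phi_c^x$ is attached directly to the root (so $\gamma_x(w_c) = 0$); $\phi_c^x$ is itself a ladder; and, since the subtree opposite $\phi_c^x$ under the root is a ladder on $n-|c|$ leaves, collapsing $\phi_c^x$ into a leaf $l$ produces a ladder tree $f_c(x) \in \TT_{n-|c|+1}$, with $l$ placed at the top. This establishes the first inequality.

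Finally, the closed form follows by plugging $\gamma_x(w_c)=0$ into Lemma~\ref{lem:A_0} for $A$ and using the minimum formula of Corollary~\ref{max_min_nei} to evaluate $B$ and $D$ on ladders:
\[
A + B + D \;=\; \bigl[-8|c|^2 + 8|c|n + 6|c| - 8n - 2\bigr] + \bigl[3(n-|c|+1)^2 - 13(n-|c|+1) + 14\bigr] + \bigl[3|c|^2 - 13|c| + 14\bigr],
\]
which after routine expansion simplifies to $3n^2 + 2n|c| - 2|c|^2 - 15n + 16$ for $|c|\ge 3$; for $|c|=2$ one has $D=0$ and the denominator becomes $(8n - 22) + 3(n-1)^2 - 13(n-1) + 14 = 3n^2 - 11n + 8$. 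The main subtlety is the compatibility claim: a priori the constraint $\gamma_z(w_c) = 0$ could conflict with minimising $B$ and $D$. However, once $\gamma_z(w_c) = 0$ is enforced, the shapes of $\phi_c^z$ and of the opposite subtree of the root can be prescribed independently, and choosing both to be ladders realises the joint minimum---attaching a leaf at the top of a ladder still produces a ladder, so $f_c(z)$ is automatically a ladder in $\TT_{n-|c|+1}$.
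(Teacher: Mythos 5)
Your proof is correct and follows essentially the same route as the paper's: both use Lemma~\ref{lem:card_A_1^xc} to write $|N(z)| = |A_0^{z,c}| + |N(f_c(z))| + |N(\phi_c^z)|$, Lemma~\ref{lem:A_0} to see that $|A_0^{z,c}|$ is a decreasing affine function of $\gamma_z(w_c)$, and Corollary~\ref{max_min_nei} for the ladder minimality of the two neighbourhood terms, before combining them with an elementary monotonicity argument. Your packaging via joint monotonicity of $A/(A+B+D)$ is just a tidier restatement of the paper's two chained inequalities (the increase in $A$ is exactly their $a/b \ge (a-\delta)/(b-\delta)$ step), and your explicit check that the three optima are simultaneously attainable is a point the paper leaves implicit.
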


\begin{proof}
We begin our proof for $|c| \ge 3$. By Lemma~\ref{lem:A_0} we have
\begin{equation}\label{eq:A_0^x_max}
|A_0^{x,c}| =  -8|c|^2 + 8|c|n + 6|c| -8n-2 \,,  
\end{equation}
since $\gamma_x (w_c) = 0$. Again, by Lemma~\ref{lem:A_0}, we obtain $|A_0^{z,c}| = -8|c|^2 + 8|c|n + 6|c| -8n-2 - 2\gamma_z(w_c)(|c|-1)$. Hence, $|A_0^{z,c}|$ can be rewritten as $|A_0^{z,c}| = |A_0^{x,c}| - 2\gamma_z(w_c)(|c|-1)$ for all $z \in S_1(c)$.

Now for all $z \in S_1(c)$, we set $z^\prime=f_c(z)$, and by Lemma~\ref{lem:card_A_1^xc}, we find
\begin{equation}\label{eq:A_0^w}
\begin{split}
\frac{|A_0^{z,c}|}{|N(z)|} & = \frac{|A_0^{z,c}|}{|N(z^\prime)|+|N(\phi_c^{z})|+ |A_0^{z,c}|} 
\\
& = \frac{|A_0^{x,c}| - 2\gamma_z(w_c)(|c|-1)}{|N(z^\prime)|+|N(\phi_c^{z})|+ |A_0^{x,c}| - 2\gamma_z(w_c)(|c|-1)} \,.
\end{split}
\end{equation}

It is important to notice that, for all $z \in S_1(c)$, we have $|N(x^\prime)| + |N(\phi_c^{x})| \leq |N(z^\prime)|+|N(\phi_c^{z})|$, by Corollary~\ref{max_min_nei}, since $x^\prime$ and $\phi_c^x$ are ladder trees. Then, we obtain
\begin{equation}\label{eq:A_0/Nx}
\begin{split}
\frac{|A_0^{x,c}|}{|N(x)|} & = \frac{|A_0^{x,c}|}{|N(x^\prime)|+|N(\phi_c^{x})|+ |A_0^{x,c}|} 
\\
& \ge  \frac{|A_0^{x,c}|}{|N(z^\prime)|+|N(\phi_c^{z})|+ |A_0^{x,c}|}
\\
& \geq \frac{|A_0^{x,c}| - 2\gamma_z(w_c)(|c|-1)}{|N(z^\prime)|+|N(\phi_c^{z})|+ |A_0^{x,c}| - 2\gamma_z(w_c)(|c|-1)} = \frac{|A_0^{z,c}|}{|N(z)|}\,.  
\end{split}    
\end{equation}
The second inequality in~\eqref{eq:A_0/Nx} holds because $\gamma_z(w) \in \{0, 1, \dots, n-3 \}$, for all $z \in \TT_n$, and by the fact that $a/b \ge (a-\delta)/(b-\delta)$ for all $a, b > \delta > 0$. The last equality in~\eqref{eq:A_0/Nx} we have from equation~\eqref{eq:A_0^w}.

Now by Corollary~\ref{max_min_nei}, we find $|N(x^\prime)| = 3(n-|c|+1)^2 - 13(n-|c|+1) + 14$ and $|N(\phi_c^x)| = 3|c|^2 -13|c| +14$. Hence, by~\eqref{eq:A_0^x_max}, we conclude
\begin{equation*}
\frac{|A_0^{x,c}|}{|N(x)|} = \frac{-8|c|^2 + 8|c|n + 6|c| -8n-2}{3n^2 -2|c|^2 +2|c|n-15n+16} \,. 
\end{equation*}
Thus, we finish the proof for $|c| \ge 3$.

For $|c|=2$ we can do the same techniques and computations we did for $|c| \ge 3$. By Lemma~\ref{lem:A_0}, when $|c| = 2$, we have $|A_0^{x,c}| = 8n-22$. Since $|N(\phi_c^x)| = 0$ for $|c| = 2$, we find
\begin{equation*}
\frac{|A_0^{x,c}|}{|N(x)|} = \frac{8n-22}{3n^2-11n+8} \,,    
\end{equation*}
which finishes the proof.
\end{proof}

For a tree $x \in S_0(c)$, we define the set $B_1^{x,c}:= N(x) \cap S_1(c)$, i.e., the of neighbours the tree $x$ that have the clade $c$.
Subsequently, we aim to determine the possible cardinalities of $B_1^{x,c}$, which will vary according to the tree's shape.

For these computations, preliminary definitions are required.
Given a tree $y$ in $S_0(c)$, we introduce $I$ as a set of labels for which there exists a subtree $\phi_I$ in $y$, comprising exclusively the labels from $I$.
Furthermore, the union $c \cup I$ forms a subtree. 
%We take a tree $y \in S_0(c)$ which the set $c \cup I$ has the same least common ancestor and $c \cap I = \emptyset$. Besides that, in $y$ we will find the subtree $\phi_I$ which the leaves will be the set $I$. 
For a better understanding see an example in Figure~\ref{fig:intruso}. 
We can now state Lemma~\ref{lem:B_1^xc}, which determines $|B_1^{x,c}|$.

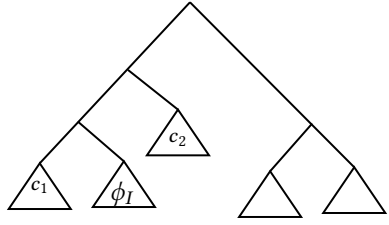
\begin{figure}[h]
    \centering
    \tikzset{every picture/.style={line width=0.75pt}} %set default line width to 0.75pt        

\begin{tikzpicture}[x=0.75pt,y=0.75pt,yscale=-1,xscale=1]
%uncomment if require: \path (0,300); %set diagram left start at 0, and has height of 300

%Straight Lines [id:da687983284948597] 
\draw    (264,47) -- (346,130) ;
%Straight Lines [id:da9430636421061946] 
\draw    (264,47) -- (189,128) ;
%Straight Lines [id:da31170645029913624] 
\draw    (233,81) -- (258,101) ;
%Straight Lines [id:da9423976078432086] 
\draw    (208,107) -- (231,127) ;
%Shape: Triangle [id:dp21993876700697546] 
\draw   (189,128) -- (204.5,151) -- (173.5,151) -- cycle ;
%Shape: Triangle [id:dp5671838708436032] 
\draw   (231,127) -- (246.5,150) -- (215.5,150) -- cycle ;
%Shape: Triangle [id:dp037919600616913574] 
\draw   (258,101) -- (273.5,124) -- (242.5,124) -- cycle ;
%Straight Lines [id:da24922555800120572] 
\draw    (325,108) -- (304,132) ;
%Shape: Triangle [id:dp9155968120246192] 
\draw   (304,132) -- (319.5,155) -- (288.5,155) -- cycle ;
%Shape: Triangle [id:dp4063292431708312] 
\draw   (346,130) -- (361.5,153) -- (330.5,153) -- cycle ;

% Text Node
\draw (183,135.4) node [anchor=north west][inner sep=0.75pt]  [font=\scriptsize]  {$c_{1}$};
% Text Node
\draw (252,111.4) node [anchor=north west][inner sep=0.75pt]  [font=\scriptsize]  {$c_{2}$};
% Text Node
\draw (223,135.4) node [anchor=north west][inner sep=0.75pt]  [font=\footnotesize]  {$\phi _{I}$};

\end{tikzpicture}

    \caption{\textbf{Illustration of a clade-breaking subtree.}
    Suppose $c_1$ and $c_2$ are clades such that $c_1 \cup c_2 = c$.
    It is possible to see that $I \cup c$ generates a subtree.}
    \label{fig:intruso}
\end{figure}

\begin{lemma}[\textbf{The number of neighbours of a tree $x \in S_0(c)$ which have $c$}.]
\label{lem:B_1^xc}
Let $x \in S_0(c)$ and $3 \le |c| \le n-3$ then $|B_1^{x,c}| \in \{0, 2(|c| -1), 2(n - |I|)-3 \}$.
% \begin{equation*}
% |B_1^{x,c}| = \begin{cases}
% 2(|c| -1),
% \\
% 2(n-|I|) -3,
% \\
% 0\,.
% \end{cases}
% \end{equation*}
\end{lemma}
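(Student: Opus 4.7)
My plan is to enumerate the rSPR operations from $x$ whose output contains the clade $c$. A preliminary observation restricts the pruned subtree $T_p$: if $T_p$'s leaf set partially overlaps $c$ (neither empty intersection nor full containment), then after regrafting the non-$c$ leaves of $T_p$ remain tangled with its $c$-leaves and $c$ cannot be a clade of the output. Hence $T_p$ is either a subtree of $x$ whose leaves lie entirely in $L \setminus c$ (an intruder subtree) or a subtree whose leaves lie entirely in $c$.

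I will then analyse three cases based on how $c$ sits in $x$. Let $T|_c$ denote the minimal subtree of $x$ spanning $c$; its intruders are the maximal non-$c$ subtrees attached along $T|_c$.

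\textbf{Case (a).} $c$ admits a decomposition $c = c_1 \sqcup c_2$ into two maximal clades of $x$, and there exists a single intruder subtree $\phi_I$ with $c \cup I$ itself a subtree of $x$, as in Figure~\ref{fig:intruso}. Three families of rSPRs contribute. First, pruning $\phi_I$ and regrafting on any edge outside the $c$-subtree of the remaining tree (or above its root) yields $2(n-|I|-|c|)+1$ neighbours in $S_1(c)$. Second and third, for $i=1,2$, pruning the subtree of $x$ with leaves $c_i$ and regrafting inside or just above the subtree with leaves $c_{3-i}$ yields $2|c_{3-i}|-1$ neighbours. Summing gives a raw count of $2(n-|I|)-1$. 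The unique tree where the $c_1$- and $c_2$-subtrees become siblings at $\phi_I$'s former branching point is produced by exactly one regrafting from each of the three families, a threefold coincidence accounting for $2$ duplicate counts, so the distinct total is $2(n-|I|)-3$.

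\textbf{Case (b).} $c = c_1 \sqcup c_2$ with $c_1, c_2$ maximal clades of $x$, but no suitable intruder $I$ with $c \cup I$ a subtree. Pruning an intruder does not apply here, and the threefold coincidence of Case (a) does not arise because $c_1$ and $c_2$ lie at asymmetric positions of $x$, so the two contributing families of regraftings produce disjoint outputs, giving $|B_1^{x,c}| = (2|c_1|-1)+(2|c_2|-1) = 2(|c|-1)$. \textbf{Case (c).} Finally, if $c$ is distributed across three or more maximal clades of $x$, a single rSPR moves only one subtree and so cannot merge three or more fragments of $c$, whence $|B_1^{x,c}|=0$.

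The main technical difficulty is the overlap analysis in Case (a). My plan is to parametrise each output tree by the vertex below which the clade-$c$ subtree sits, and trace bijectively which pruning-regrafting pairs realise each such vertex; this matching reveals exactly one vertex of threefold multiplicity, namely the former location of $\phi_I$'s branching point, yielding the stated correction of $2$.
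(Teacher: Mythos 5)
Your proposal reproduces the paper's proof almost exactly: the same trichotomy according to how $c$ fragments inside $x$, the same three families of clade-creating rSPR moves, the same raw counts, and the same overlap correction of $2$ coming from the threefold coincidence in Case (a). Cases (a) and (b) are handled correctly (in Case (b) the disjointness of the two families is asserted rather than argued, but the paper is no more explicit). The genuine gap is Case (c). Your own preliminary observation allows the pruned subtree to be \emph{disjoint} from $c$, and in Case (a) you correctly exploit this: pruning the intruder $\phi_I$ liberates the clade without moving any fragment of $c$. But that mechanism does not care how many maximal fragments $c$ has; it only requires that $c \cup I$ be a subtree for a single intruder $I$.

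Concretely, take $n=7$, $c=\{1,2,3\}$ and $x=((((7,1),2),3),((4,5),6))$. Here $c$ splits into the three maximal singleton clades $\{1\},\{2\},\{3\}$, so your Case (c) predicts $|B_1^{x,c}|=0$; yet $c\cup\{7\}$ is a subtree of $x$, and pruning leaf $7$ and regrafting it onto any of the $2(n-|I|-|c|)+1=7$ positions outside the newly formed $c$-clade yields $7$ distinct neighbours in $S_1(c)$, while no move that prunes a fragment of $c$ (or any other subtree) can create the clade. Hence $|B_1^{x,c}|=7$, which lies outside the claimed set $\{0,\,2(|c|-1),\,2(n-|I|)-3\}=\{0,4,9\}$. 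The slogan ``one rSPR cannot merge three or more fragments'' is therefore insufficient: the rSPR may instead evict the unique obstruction. Be aware that the paper's own proof contains the same flaw --- it asserts that $x$ can neighbour a tree of $S_1(c)$ \emph{only if} $c$ splits into exactly two clades --- so the configuration ``at least three fragments but exactly one intruder'' is a missing case of the lemma itself (contributing the additional value $2(n-|I|-|c|)+1$), not merely of your write-up; any fix must split the cases by the number of intruders as well as by the number of fragments.
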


The proof of Lemma~\ref{lem:B_1^xc} will be postponed to Appendix~\ref{sec:app_proofs}.
Lemma~\ref{lem:B_1^xc} shows that with $3 \le |c| \le n-3$, the cardinality of $B_1^{x,c}$ can assume three different values depending on the distribution of the components of the clade $c$ within the tree $x \in S_0(c)$.
The next result calculates $|B_1^{x,c}|$ when $|c| = 2$.
\begin{corollary}\label{cor:B_1^x,2}
Let $x \in S_0(c)$ and $|c| = 2$. Then $|B_1^{x,c}|$ can be three different values $|B_1^{x,c}| \in \{1, 2, 2(n - |I|) -3 \}$.
% \begin{equation*}
% |B_1^{x,c}| = \begin{cases}
% 1\,,
% \\
% 2\,,
% \\
% 2(n-|I|) - 3\,.
% \end{cases}
% \end{equation*}    
\end{corollary}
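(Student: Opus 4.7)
The plan is to adapt the argument of Lemma~\ref{lem:B_1^xc} to the boundary case $|c|=2$. Write $c=\{a,b\}$ and let $w$ denote the least common ancestor of $a$ and $b$ in $x$; since $x\in S_0(c)$, at least one of $a,b$ lies strictly below a non-leaf child of $w$. I would first enumerate all rSPR moves producing a neighbour in $S_1(c)$, splitting them into three types: (A) prune the leaf $a$ alone and regraft on the edge immediately above $b$; (B) the mirror move with $a$ and $b$ interchanged; (C) prune a subtree $T$ disjoint from $\{a,b\}$ whose induced parent-suppression makes $a$ and $b$ siblings. As in Lemma~\ref{lem:B_1^xc}, case (C) is applicable if and only if $x$ exhibits an intruder structure $w\to(a,v')$ with $v'\to(b,\phi_I)$ (or its mirror with $a$ and $b$ swapped), in which case the pruned subtree $T$ must be $\phi_I$ and $I$ is its leaf set.

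Next I would split according to whether $x$ carries the intruder configuration. In the non-intruder case, (C) is vacuous; moves (A) and (B) yield distinct neighbours because they deposit the new $\{a,b\}$ clade at the original positions of $b$ and of $a$ respectively, and these positions are topologically distinguishable exactly when the intruder configuration fails. This gives $|B_1^{x,c}|=2$.

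In the intruder case I would reuse the counting from the proof of Lemma~\ref{lem:B_1^xc}: pruning $\phi_I$ and suppressing its parent leaves a tree with $n-|I|$ leaves and $2(n-|I|)-2$ edges, to which one adds the root-regraft option, for $2(n-|I|)-1$ regraft positions in total. Exactly one of these (the merged edge from $w$ to $b$) recreates $x$, exactly one (the edge $w\to a$) breaks the $\{a,b\}$ clade, and the remaining $2(n-|I|)-3$ choices give distinct neighbours in $S_1(c)$. A short check shows that (A), (B), and the (C)-regraft of $\phi_I$ on the edge immediately above $w$ (or as a new root when $w$ is the root of $x$) produce the same labelled tree, so (A) and (B) add no extra neighbours and $|B_1^{x,c}|=2(n-|I|)-3$. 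The special value $1$ appearing in the statement arises as the boundary instance $|I|=n-2$: here $w$ is the root, $\phi_I$ carries all leaves other than $a,b$, and the three potentially different moves collapse to a single tree. I would record this case separately to make the appearance of $1$ explicit in the enumeration $\{1,2,2(n-|I|)-3\}$.

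The main obstacle will be the bookkeeping needed to verify that (A), (B), and the distinguished (C)-regraft yield the very same labelled tree in the intruder configuration. This identification is geometrically transparent but requires careful tracking of the successive suppressions of $v$ (induced by (A)), of the sibling vertex (induced by (B)), and of $v'$ (induced in the (C)-branch); once it is established, the remaining enumeration is a direct specialisation of the argument in Lemma~\ref{lem:B_1^xc}.
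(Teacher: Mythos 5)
Your proposal is correct and follows the same route as the paper, which simply specialises the case analysis of Lemma~\ref{lem:B_1^xc} to $|c|=2$ and records in the surrounding remarks that the value $0$ is impossible and that the value $1$ can occur when the two leaf-regrafting moves coincide (Figure~\ref{fig:B_1^x,c=1}). Your account is in fact slightly sharper: you correctly identify $1$ as the boundary instance $|I|=n-2$ of the intruder formula $2(n-|I|)-3$, and you observe that the coincidence of moves (A) and (B) occurs in \emph{every} intruder configuration (where it is already absorbed into that count), whereas the paper's remark attributes the value $1$ solely to that coincidence.
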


The proof of Corollary~\ref{cor:B_1^x,2} follows similarly to in the proof of Lemma~\ref{lem:B_1^xc}.
However, in Corollary~\ref{cor:B_1^x,2} it is not possible for $|B_1^{x,c}|$ to be equal 0. If $|c| = 2$ and the tree does not have the clade $c$, it always will have at least 1 neighbour with the clade because it is always possible to regraft one leaf in the other in a tree.

Also, it is important to note that the value of $|B_1^{x,c}|$ can be 1.
This occurs in special cases when, for a clade $c = \{i, j\}$, regrafting $i$ onto the edge of $j$ or vice versa results in the same tree.
Figure~\ref{fig:B_1^x,c=1} illustrates this case.

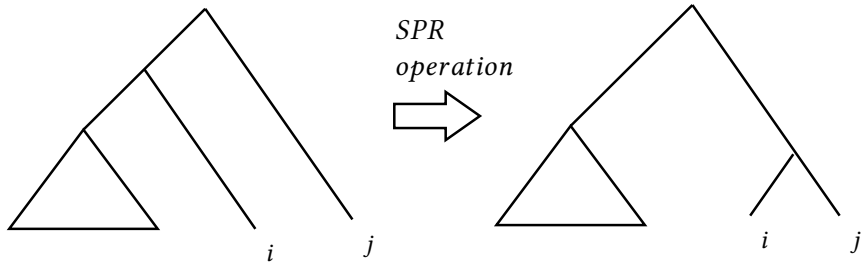
\begin{figure}[h]
    \centering

\tikzset{every picture/.style={line width=0.95pt}} %set default line width to 0.75pt        

\begin{tikzpicture}[x=0.95pt,y=0.95pt,yscale=-1.5,xscale=1.5]
%uncomment if require: \path (0,229); %set diagram left start at 0, and has height of 229

%Straight Lines [id:da17829753572669538] 
\draw    (98.83,123.33) -- (66.83,155.33) ;
%Straight Lines [id:da45237785018722065] 
\draw    (98.83,123.33) -- (137.5,179.09) ;
%Straight Lines [id:da6854881751043023] 
\draw    (82.83,139.33) -- (112,181.59) ;
%Shape: Triangle [id:dp23700145423815155] 
\draw   (66.83,155.33) -- (86.33,181.59) -- (47.33,181.59) -- cycle ;
%Straight Lines [id:da37556090287240296] 
\draw    (226.83,122.33) -- (194.83,154.33) ;
%Straight Lines [id:da4068614387189564] 
\draw    (226.83,122.33) -- (265.5,178.09) ;
%Straight Lines [id:da46045225762082054] 
\draw    (253.33,161.83) -- (242,178.09) ;
%Shape: Triangle [id:dp12353634946380665] 
\draw   (194.83,154.33) -- (214.33,180.59) -- (175.33,180.59) -- cycle ;
%Right Arrow [id:dp6716259721239453] 
\draw   (148.5,148.52) -- (162,148.52) -- (162,145.34) -- (171,151.71) -- (162,158.09) -- (162,154.9) -- (148.5,154.9) -- cycle ;

% Text Node
\draw (114,184.99) node [anchor=north west][inner sep=0.75pt]  [font=\footnotesize]  {$i$};
% Text Node
\draw (139.5,182.49) node [anchor=north west][inner sep=0.75pt]  [font=\footnotesize]  {$j$};
% Text Node
\draw (244,181.49) node [anchor=north west][inner sep=0.75pt]  [font=\footnotesize]  {$i$};
% Text Node
\draw (267.5,181.49) node [anchor=north west][inner sep=0.75pt]  [font=\footnotesize]  {$j$};
% Text Node
\draw (144.5,123.74) node [anchor=north west][inner sep=0.75pt]  [font=\small]  {$ \begin{array}{l}
SPR\\
operation
\end{array}$};

\end{tikzpicture}
    
    \caption{\textbf{SPR operation resulting on the same tree}.
    Let $c = \{i, j\}$ be the clade of interest, then the tree on the left side above clearly does not have $c$.
    The triangle in the tree above is any subtree, then if we regraft $i$ in the edge of $j$ or vice-versa it will result in the same tree.}
    \label{fig:B_1^x,c=1}
\end{figure}

Our next auxiliary result (Lemma~\ref{lem:max_B_N}) is quite similar to the one in Lemma~\ref{lem:max_A_N}, however, we are now interested in the case that $|c| = 2$, the tree is in $S_0(c)$ and the ratio will be between the number of neighbours with clade $c$ and the total number of neighbours, see Figure~\ref{fig:max_B_1^x_Nx} for an example of this maximum ratio.

\begin{figure}[h]
    \centering

     \tikzset{every picture/.style={line width=0.95pt}} %set default line width to 0.75pt        

\begin{tikzpicture}[x=0.95pt,y=0.95pt,yscale=-1.2,xscale=1.2]
%uncomment if require: \path (0,313); %set diagram left start at 0, and has height of 313

%Straight Lines [id:da652046460948305] 
\draw    (241,41) -- (297.5,111.25) ;
%Straight Lines [id:da4941973081496518] 
\draw    (241,41) -- (152.5,109.75) ;
%Straight Lines [id:da19481731809254677] 
\draw    (226,52.5) -- (271,110.25) ;
%Straight Lines [id:da5622991487281588] 
\draw    (180.75,89.88) -- (197.5,110.75) ;
%Straight Lines [id:da30065476815706393] 
\draw    (168.75,98.88) -- (180.5,111.25) ;
%Straight Lines [id:da6966871441439946] 
\draw  [dash pattern={on 0.84pt off 2.51pt}]  (214,117.76) -- (300.5,118.24) ;
\draw [shift={(302.5,118.25)}, rotate = 180.32] [color={rgb, 255:red, 0; green, 0; blue, 0 }  ][line width=0.75]    (10.93,-3.29) .. controls (6.95,-1.4) and (3.31,-0.3) .. (0,0) .. controls (3.31,0.3) and (6.95,1.4) .. (10.93,3.29)   ;
\draw [shift={(212,117.75)}, rotate = 0.32] [color={rgb, 255:red, 0; green, 0; blue, 0 }  ][line width=0.75]    (10.93,-3.29) .. controls (6.95,-1.4) and (3.31,-0.3) .. (0,0) .. controls (3.31,0.3) and (6.95,1.4) .. (10.93,3.29)   ;
%Straight Lines [id:da049806922828038] 
\draw [line width=0.75]  [dash pattern={on 0.84pt off 2.51pt}]  (215,86.5) -- (205,91.75) ;
%Straight Lines [id:da8038338483586485] 
\draw    (216,61.5) -- (255.5,111.25) ;
%Straight Lines [id:da6857852069109818] 
\draw    (190.5,81.5) -- (214.5,111.25) ;

% Text Node
\draw (239,125.9) node [anchor=north west][inner sep=0.75pt]  [font=\footnotesize]  {$n-3\ \text{leaves}$};
% Text Node
\draw (147,111.4) node [anchor=north west][inner sep=0.75pt]  [font=\footnotesize]  {$c_{1}$};
% Text Node
\draw (178,112.9) node [anchor=north west][inner sep=0.75pt]  [font=\footnotesize]  {$I$};
% Text Node
\draw (195.5,111.9) node [anchor=north west][inner sep=0.75pt]  [font=\footnotesize]  {$c_{2}$};

\end{tikzpicture}
    \caption{\textbf{Illustration of a tree satisfying the conditions of Lemma~\ref{lem:max_B_N}.}
    The tree is clearly in $S_0(c)$, then by Lemma~\ref{lem:B_1^xc} we have $|B_1^{x,c}| = 2n-5$, since $|I| = 1$.}
    \label{fig:max_B_1^x_Nx}
\end{figure}
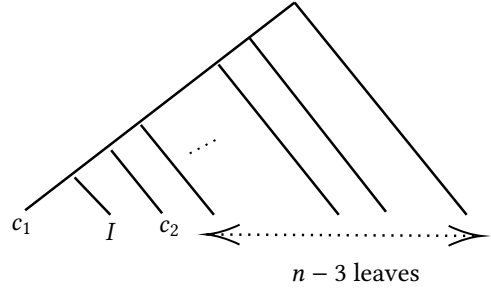

\begin{lemma}
\label{lem:max_B_N}
Let $x \in S_0(c)$ be a ladder tree such that has a subtree with leaves $\{c_1, c_2, I\}$ and $c :=\{c_1, c_2\}$. Then for all $z \in S_0(c)$ we have
\begin{equation*}
\begin{split}
\frac{|B_1^{x,c}|}{|N(x)|} & \ge \frac{|B_1^{z,c}|}{|N(z)|} \, \text{ and}
%\\
%\frac{|A_0^{x,c}|}{|N(x)|} & = \frac{-8|c|^2 + 8|c|n + 6|c| -8n-2}{3n^2 -2|c|^2 +2|c|n-15n+16} \quad \text{for } |c| \ge 3 \text{ and}
\\
\frac{|B_1^{x,c}|}{|N(x)|} & = \frac{2n -5}{3n^2 -13n +14} \,.
\end{split}
\end{equation*}
\end{lemma}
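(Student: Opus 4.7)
The plan is to show that the tree $x$ described in the statement simultaneously maximises the numerator $|B_1^{x,c}|$ (over $S_0(c)$) and minimises the denominator $|N(x)|$ (over all of $\TT_n$), so that no tradeoff needs to be balanced.

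First I would compute the two quantities for the specific $x$ in the statement. Since $|c|=2$ and $x$ contains a subtree on the leaf set $\{c_1,c_2\}\cup I$ with $|I|=1$, Corollary~\ref{cor:B_1^x,2} gives $|B_1^{x,c}| = 2(n-|I|)-3 = 2n-5$ (rather than $1$ or $2$). Because $x$ is a ladder tree, Corollary~\ref{max_min_nei} yields $|N(x)| = 3n^2 - 13n + 14$. Dividing produces the claimed value
\[
\frac{|B_1^{x,c}|}{|N(x)|} = \frac{2n-5}{3n^2 - 13n + 14}\,.
\]

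For the inequality, I would argue the two factors separately. For any $z \in S_0(c)$ with $|c|=2$, Corollary~\ref{cor:B_1^x,2} says $|B_1^{z,c}|$ is one of $1$, $2$, or $2(n-|I_z|)-3$, where $I_z$ is the corresponding set of ``intruder'' leaves for $z$. Since $|I_z| \ge 1$, each of these values is at most $2n-5$, so $|B_1^{z,c}| \le |B_1^{x,c}|$. Independently, Corollary~\ref{max_min_nei} gives $|N(z)| \ge 3n^2 - 13n + 14 = |N(x)|$ for every $z \in \TT_n$. Combining both bounds,
\[
\frac{|B_1^{z,c}|}{|N(z)|} \;\le\; \frac{2n-5}{|N(z)|} \;\le\; \frac{2n-5}{3n^2-13n+14} \;=\; \frac{|B_1^{x,c}|}{|N(x)|}\,,
\]
which is the desired inequality.

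There is no real obstacle here: the argument is a clean consequence of the combinatorial bookkeeping established in the preceding lemmas and corollaries. The only point worth being careful about is verifying that the tree $x$ described (a ladder whose top cherry contains $c_1$, an intruder leaf, and $c_2$ arranged so that $\{c_1,c_2,I\}$ forms a subtree) simultaneously realises both extremes; this amounts to checking that this configuration is genuinely a ladder shape so that Corollary~\ref{max_min_nei} applies, and that $|I|=1$ so Corollary~\ref{cor:B_1^x,2} yields $2n-5$, as illustrated in Figure~\ref{fig:max_B_1^x_Nx}.
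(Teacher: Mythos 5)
Your proposal is correct and follows essentially the same route as the paper's own proof: both arguments observe that the stated $x$ simultaneously maximises $|B_1^{x,c}|$ at $2n-5$ via Corollary~\ref{cor:B_1^x,2} and minimises $|N(x)|$ at $3n^2-13n+14$ via Corollary~\ref{max_min_nei}, then combine the two bounds. Your version is slightly more explicit about why $|I_z|\ge 1$ forces $|B_1^{z,c}|\le 2n-5$, but there is no substantive difference.
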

\begin{proof}
By Corollary~\ref{cor:B_1^x,2} we have $|B_1^{x,c}| = 2n-5$ and $|B_1^{x,c}| \ge |B_1^{z,c}|$, for every $z \in S_0 (c)$. 

By Corollary~\ref{max_min_nei}, we obtain that $|N(x)| = 3n^2 - 13n + 14$ and $|N(x)| \le |N(y)|$ for every $y \in \TT_n$. Ergo, we have
\begin{equation*}
    \frac{|B_1^{x,c}|}{|N(x)|} = \frac{2n-5}{3n^2 - 13n + 14} \ge \frac{|B_1^{z,c}|}{|N(z)|}\,,
\end{equation*}
for every $z \in S_0 (c)$.
\end{proof}

\section{Lumpability in tree space}
\label{sec:tree_lumpability}

%\comL{An interesting thing to look at is the mixing time of a lumped chain. Can we say something similar to  Lemma 4 in \cite{Simper2022}?}
%\com{The lemma is general for the lumped chain, so it applies to our case, and in fact, to all cases.}

% \com{It is already in another section.}
% A necessary and sufficient condition for a Markov chain to be lumpable with respect to a given partition is the so-called row sum criterion: for $x, y \in E_i$ we must have
% \begin{equation*}
%  \sum_{z \in E_j} p(x, z) = \sum_{z \in E_j} p(y, z),
% \end{equation*}
% where $p(\cdot, \cdot)$ is the transition probability encoded by $\boldsymbol{P}_x$.

% In matrix notation, we can equivalently ask that the following holds:
% \begin{equation*}
%  BAPB = PB,
% \end{equation*}
% where

% \begin{definition}[\textbf{Lumpability}]
% Let $(X_k)_{k\geq 0}$  be Markov chain on a finite state-space $S = \{e_1, e_2, \ldots, e_r\}$ and initial distribution $\pi$ and  matrix of transition probabilities $\boldsymbol{P}_x$.
% We say $(X_k)_{k\geq 0}$ is~\textbf{lumpable} with respect to a partition of $\bar{S} = \{E_1, E_2, \ldots, E_v\}$ of the state space if a new chain on $\bar{S}$ induced by $(X_k)_{k\geq 0}$, $(\bar{X}_k)_{k\geq0}$, is also a Markov chain for any $\pi$.
% \end{definition}

%\subsection{Shape-lumpability of tree-valued Markov chains}
%\label{sec:shape_lumpability}

Armed with the preparatory results in Section~\ref{sec:prelim2}, we are now finally ready to study lumpability of Markov processes in the space of rooted binary phylogenetic trees.

Let $\Bar{F} :=\{F_1, F_2, \dots, F_v\}$ be the tree shape partition of $\TT_n$. %such that for each $i \in \{1, 2, \dots, v\}$, the set $F_i$ represents a specific tree shape. Here, $v$ denotes the number of distinct tree shapes, given by $K = C_{n-1} = \frac{1}{n} {2(n-1) \choose n-1}$ (see \cite{Billera2001}), a Catalan Number. %This analysis parallels examining the space of rooted unlabeled binary trees. According to \cite{Semple2003}, there exists an asymptotic equivalence for 
%$K$, expressed as $K \sim ab^n n^{-3/2}$,
%where $a=0,31877\dots$ and $b = 2,4832\dots$.
It is important to notice that for all $i \in \{1,2, \dots, v\}$ and $x, y \in F_i$, we have $|N(x)| = |N(y)|$ by Proposition 3.2 in~\cite{Song2003}.
%since $\sum_{j=1}^{n-2} \gamma_x(v_j) = \sum_{j=1}^{n-2} \gamma_y (v_j)$. 
For a given $x \in \TT_n$, we define $F_i^{x}$ as the set of neighbours of 
$x$ that have the tree shape $F_i \in \Bar{F}$.
Formally, $F_i^{x}:=\{ y \in T_n : d_{rSPR}(x, y) = 1 \text{ and } y \in F_i \} = N(x) \cap F_i$.

Suppose we have two trees, $x$ and $y$, with the same shape, that is, $x, y \in F_i$ for some $i \in \{1,2, \dots, v\}$.
Then, performing a SPR operation on $x$ can be mirrored identically on $y$. This is important because trees with the same shape will have the same number of trees with the same shape.
This implies that a subtree can be pruned and regrafted at the same location in both trees -- Figure~\ref{fig:shapeSPR} provides an example for $x, y \in \TT_5$. 
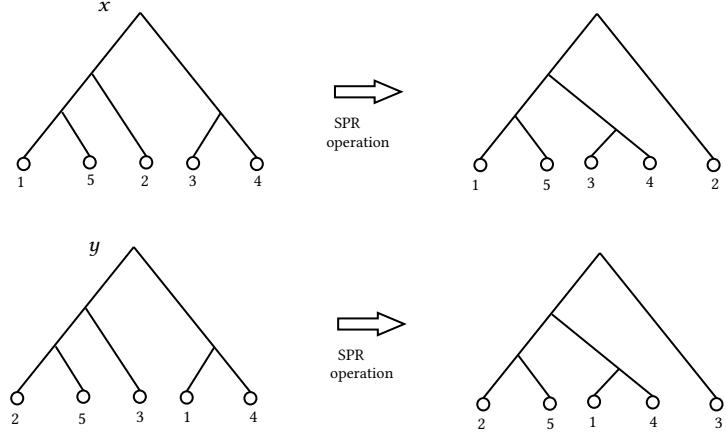
\begin{figure}[h]
    \centering           

\tikzset{every picture/.style={line width=0.75pt}} %set default line width to 0.75pt        

\begin{tikzpicture}[x=0.75pt,y=0.75pt,yscale=-0.8,xscale=1.2]
%uncomment if require: \path (0,289); %set diagram left start at 0, and has height of 289

%Straight Lines [id:da373837312689304] 
\draw    (121,19.5) -- (169.5,110.75) ;
%Straight Lines [id:da6284979182020234] 
\draw    (121,19.5) -- (72.5,110.75) ;
%Straight Lines [id:da29953297358261755] 
\draw    (88.5,82) -- (100,109.75) ;
%Straight Lines [id:da5643604360721322] 
\draw    (101,58) -- (123.5,109.75) ;
%Straight Lines [id:da5858780599608944] 
\draw    (154.5,82.5) -- (143,110.25) ;
%Right Arrow [id:dp25158343275337636] 
\draw   (202,65.75) -- (218.5,65.75) -- (218.5,62.25) -- (229.5,69.25) -- (218.5,76.25) -- (218.5,72.75) -- (202,72.75) -- cycle ;
%Flowchart: Connector [id:dp025786308601177144] 
\draw   (70,114.63) .. controls (70,112.48) and (71.12,110.75) .. (72.5,110.75) .. controls (73.88,110.75) and (75,112.48) .. (75,114.63) .. controls (75,116.77) and (73.88,118.5) .. (72.5,118.5) .. controls (71.12,118.5) and (70,116.77) .. (70,114.63) -- cycle ;
%Flowchart: Connector [id:dp047179878391936825] 
\draw   (97.5,113.63) .. controls (97.5,111.48) and (98.62,109.75) .. (100,109.75) .. controls (101.38,109.75) and (102.5,111.48) .. (102.5,113.63) .. controls (102.5,115.77) and (101.38,117.5) .. (100,117.5) .. controls (98.62,117.5) and (97.5,115.77) .. (97.5,113.63) -- cycle ;
%Flowchart: Connector [id:dp23161411364684636] 
\draw   (121,113.63) .. controls (121,111.48) and (122.12,109.75) .. (123.5,109.75) .. controls (124.88,109.75) and (126,111.48) .. (126,113.63) .. controls (126,115.77) and (124.88,117.5) .. (123.5,117.5) .. controls (122.12,117.5) and (121,115.77) .. (121,113.63) -- cycle ;
%Flowchart: Connector [id:dp24890627099448226] 
\draw   (140.5,114.13) .. controls (140.5,111.98) and (141.62,110.25) .. (143,110.25) .. controls (144.38,110.25) and (145.5,111.98) .. (145.5,114.13) .. controls (145.5,116.27) and (144.38,118) .. (143,118) .. controls (141.62,118) and (140.5,116.27) .. (140.5,114.13) -- cycle ;
%Flowchart: Connector [id:dp3860478267598797] 
\draw   (167,114.63) .. controls (167,112.48) and (168.12,110.75) .. (169.5,110.75) .. controls (170.88,110.75) and (172,112.48) .. (172,114.63) .. controls (172,116.77) and (170.88,118.5) .. (169.5,118.5) .. controls (168.12,118.5) and (167,116.77) .. (167,114.63) -- cycle ;
%Straight Lines [id:da6860739229929484] 
\draw    (118.33,166.83) -- (166.83,258.08) ;
%Straight Lines [id:da58476088927871] 
\draw    (118.33,166.83) -- (69.83,258.08) ;
%Straight Lines [id:da48467722815208325] 
\draw    (85.83,229.33) -- (97.33,257.08) ;
%Straight Lines [id:da5155704516750259] 
\draw    (98.33,205.33) -- (120.83,257.08) ;
%Straight Lines [id:da6610486911770375] 
\draw    (151.83,229.83) -- (140.33,257.58) ;
%Flowchart: Connector [id:dp46352879537124303] 
\draw   (67.33,261.96) .. controls (67.33,259.82) and (68.45,258.08) .. (69.83,258.08) .. controls (71.21,258.08) and (72.33,259.82) .. (72.33,261.96) .. controls (72.33,264.1) and (71.21,265.83) .. (69.83,265.83) .. controls (68.45,265.83) and (67.33,264.1) .. (67.33,261.96) -- cycle ;
%Flowchart: Connector [id:dp3986318975794112] 
\draw   (94.83,260.96) .. controls (94.83,258.82) and (95.95,257.08) .. (97.33,257.08) .. controls (98.71,257.08) and (99.83,258.82) .. (99.83,260.96) .. controls (99.83,263.1) and (98.71,264.83) .. (97.33,264.83) .. controls (95.95,264.83) and (94.83,263.1) .. (94.83,260.96) -- cycle ;
%Flowchart: Connector [id:dp5020825342164459] 
\draw   (118.33,260.96) .. controls (118.33,258.82) and (119.45,257.08) .. (120.83,257.08) .. controls (122.21,257.08) and (123.33,258.82) .. (123.33,260.96) .. controls (123.33,263.1) and (122.21,264.83) .. (120.83,264.83) .. controls (119.45,264.83) and (118.33,263.1) .. (118.33,260.96) -- cycle ;
%Flowchart: Connector [id:dp9014028312065192] 
\draw   (137.83,261.46) .. controls (137.83,259.32) and (138.95,257.58) .. (140.33,257.58) .. controls (141.71,257.58) and (142.83,259.32) .. (142.83,261.46) .. controls (142.83,263.6) and (141.71,265.33) .. (140.33,265.33) .. controls (138.95,265.33) and (137.83,263.6) .. (137.83,261.46) -- cycle ;
%Flowchart: Connector [id:dp3670256416071018] 
\draw   (164.33,261.96) .. controls (164.33,259.82) and (165.45,258.08) .. (166.83,258.08) .. controls (168.21,258.08) and (169.33,259.82) .. (169.33,261.96) .. controls (169.33,264.1) and (168.21,265.83) .. (166.83,265.83) .. controls (165.45,265.83) and (164.33,264.1) .. (164.33,261.96) -- cycle ;
%Right Arrow [id:dp51259491421467] 
\draw   (203.33,211.75) -- (219.83,211.75) -- (219.83,208.25) -- (230.83,215.25) -- (219.83,222.25) -- (219.83,218.75) -- (203.33,218.75) -- cycle ;
%Straight Lines [id:da9249728205902379] 
\draw    (311,20.17) -- (359.5,111.42) ;
%Straight Lines [id:da5151378928470052] 
\draw    (311,20.17) -- (262.5,111.42) ;
%Straight Lines [id:da02851359937547282] 
\draw    (276.9,84.27) -- (290.2,111) ;
%Straight Lines [id:da7849740145483513] 
\draw    (291,58.67) -- (333,110.2) ;
%Straight Lines [id:da9192402534746968] 
\draw    (318.9,93.17) -- (308.6,109.8) ;
%Flowchart: Connector [id:dp8458298422320267] 
\draw   (260,115.29) .. controls (260,113.15) and (261.12,111.42) .. (262.5,111.42) .. controls (263.88,111.42) and (265,113.15) .. (265,115.29) .. controls (265,117.43) and (263.88,119.17) .. (262.5,119.17) .. controls (261.12,119.17) and (260,117.43) .. (260,115.29) -- cycle ;
%Flowchart: Connector [id:dp793293115126755] 
\draw   (287.7,114.88) .. controls (287.7,112.73) and (288.82,111) .. (290.2,111) .. controls (291.58,111) and (292.7,112.73) .. (292.7,114.88) .. controls (292.7,117.02) and (291.58,118.75) .. (290.2,118.75) .. controls (288.82,118.75) and (287.7,117.02) .. (287.7,114.88) -- cycle ;
%Flowchart: Connector [id:dp1723187265453241] 
\draw   (306.1,113.68) .. controls (306.1,111.53) and (307.22,109.8) .. (308.6,109.8) .. controls (309.98,109.8) and (311.1,111.53) .. (311.1,113.68) .. controls (311.1,115.82) and (309.98,117.55) .. (308.6,117.55) .. controls (307.22,117.55) and (306.1,115.82) .. (306.1,113.68) -- cycle ;
%Flowchart: Connector [id:dp6004880064605442] 
\draw   (330.5,114.08) .. controls (330.5,111.93) and (331.62,110.2) .. (333,110.2) .. controls (334.38,110.2) and (335.5,111.93) .. (335.5,114.08) .. controls (335.5,116.22) and (334.38,117.95) .. (333,117.95) .. controls (331.62,117.95) and (330.5,116.22) .. (330.5,114.08) -- cycle ;
%Flowchart: Connector [id:dp6932157300512676] 
\draw   (357,115.29) .. controls (357,113.15) and (358.12,111.42) .. (359.5,111.42) .. controls (360.88,111.42) and (362,113.15) .. (362,115.29) .. controls (362,117.43) and (360.88,119.17) .. (359.5,119.17) .. controls (358.12,119.17) and (357,117.43) .. (357,115.29) -- cycle ;
%Straight Lines [id:da5737168472733247] 
\draw    (312.2,170.6) -- (360.7,261.85) ;
%Straight Lines [id:da5884657684550032] 
\draw    (312.2,170.6) -- (263.7,261.85) ;
%Straight Lines [id:da7683659497580042] 
\draw    (278.1,234.7) -- (291.4,261.43) ;
%Straight Lines [id:da3110931057998825] 
\draw    (292.2,209.1) -- (334.2,260.63) ;
%Straight Lines [id:da28244242228911975] 
\draw    (320.1,243.6) -- (309.8,260.23) ;
%Flowchart: Connector [id:dp38144307131672806] 
\draw   (261.2,265.73) .. controls (261.2,263.58) and (262.32,261.85) .. (263.7,261.85) .. controls (265.08,261.85) and (266.2,263.58) .. (266.2,265.73) .. controls (266.2,267.87) and (265.08,269.6) .. (263.7,269.6) .. controls (262.32,269.6) and (261.2,267.87) .. (261.2,265.73) -- cycle ;
%Flowchart: Connector [id:dp6262545034883735] 
\draw   (288.9,265.31) .. controls (288.9,263.17) and (290.02,261.43) .. (291.4,261.43) .. controls (292.78,261.43) and (293.9,263.17) .. (293.9,265.31) .. controls (293.9,267.45) and (292.78,269.18) .. (291.4,269.18) .. controls (290.02,269.18) and (288.9,267.45) .. (288.9,265.31) -- cycle ;
%Flowchart: Connector [id:dp6013678722213862] 
\draw   (307.3,264.11) .. controls (307.3,261.97) and (308.42,260.23) .. (309.8,260.23) .. controls (311.18,260.23) and (312.3,261.97) .. (312.3,264.11) .. controls (312.3,266.25) and (311.18,267.98) .. (309.8,267.98) .. controls (308.42,267.98) and (307.3,266.25) .. (307.3,264.11) -- cycle ;
%Flowchart: Connector [id:dp3029599730796073] 
\draw   (331.7,264.51) .. controls (331.7,262.37) and (332.82,260.63) .. (334.2,260.63) .. controls (335.58,260.63) and (336.7,262.37) .. (336.7,264.51) .. controls (336.7,266.65) and (335.58,268.38) .. (334.2,268.38) .. controls (332.82,268.38) and (331.7,266.65) .. (331.7,264.51) -- cycle ;
%Flowchart: Connector [id:dp8734538610573308] 
\draw   (358.2,265.73) .. controls (358.2,263.58) and (359.32,261.85) .. (360.7,261.85) .. controls (362.08,261.85) and (363.2,263.58) .. (363.2,265.73) .. controls (363.2,267.87) and (362.08,269.6) .. (360.7,269.6) .. controls (359.32,269.6) and (358.2,267.87) .. (358.2,265.73) -- cycle ;

% Text Node
\draw (68.4,120.6) node [anchor=north west][inner sep=0.75pt]  [font=\tiny]  {$1$};
% Text Node
\draw (120,120.2) node [anchor=north west][inner sep=0.75pt]  [font=\tiny]  {$2$};
% Text Node
\draw (140,120.2) node [anchor=north west][inner sep=0.75pt]  [font=\tiny]  {$3$};
% Text Node
\draw (167,120.03) node [anchor=north west][inner sep=0.75pt]  [font=\tiny]  {$4$};
% Text Node
\draw (165.2,268.6) node [anchor=north west][inner sep=0.75pt]  [font=\tiny]  {$4$};
% Text Node
\draw (97.6,119.4) node [anchor=north west][inner sep=0.75pt]  [font=\tiny]  {$5$};
% Text Node
\draw (257.6,122.6) node [anchor=north west][inner sep=0.75pt]  [font=\tiny]  {$1$};
% Text Node
\draw (287.2,122.2) node [anchor=north west][inner sep=0.75pt]  [font=\tiny]  {$5$};
% Text Node
\draw (357.2,121.8) node [anchor=north west][inner sep=0.75pt]  [font=\tiny]  {$2$};
% Text Node
\draw (305.6,121.4) node [anchor=north west][inner sep=0.75pt]  [font=\tiny]  {$3$};
% Text Node
\draw (330,120.6) node [anchor=north west][inner sep=0.75pt]  [font=\tiny]  {$4$};
% Text Node
\draw (66,269.6) node [anchor=north west][inner sep=0.75pt]  [font=\tiny]  {$2$};
% Text Node
\draw (94,269.6) node [anchor=north west][inner sep=0.75pt]  [font=\tiny]  {$5$};
% Text Node
\draw (137.6,268.4) node [anchor=north west][inner sep=0.75pt]  [font=\tiny]  {$1$};
% Text Node
\draw (118,268.4) node [anchor=north west][inner sep=0.75pt]  [font=\tiny]  {$3$};
% Text Node
\draw (260,273) node [anchor=north west][inner sep=0.75pt]  [font=\tiny]  {$2$};
% Text Node
\draw (288.8,272.6) node [anchor=north west][inner sep=0.75pt]  [font=\tiny]  {$5$};
% Text Node
\draw (306.4,271.4) node [anchor=north west][inner sep=0.75pt]  [font=\tiny]  {$1$};
% Text Node
\draw (331.6,271.4) node [anchor=north west][inner sep=0.75pt]  [font=\tiny]  {$4$};
% Text Node
\draw (358.8,272.2) node [anchor=north west][inner sep=0.75pt]  [font=\tiny]  {$3$};
% Text Node
\draw (197.2,83.6) node [anchor=north west][inner sep=0.75pt]  [font=\tiny] [align=left] { \ \ SPR\\operation};
% Text Node
\draw (198.8,229.2) node [anchor=north west][inner sep=0.75pt]  [font=\tiny] [align=left] { \ \ SPR\\operation};
% Text Node
\draw (102.33,11.07) node [anchor=north west][inner sep=0.75pt]  [font=\scriptsize]  {$x$};
% Text Node
\draw (97.67,161.4) node [anchor=north west][inner sep=0.75pt]  [font=\scriptsize]  {$y$};

\end{tikzpicture}

\bigskip
    \caption{Same SPR operation on different trees, however with the same shape.}
    \label{fig:shapeSPR}
\end{figure}

The following result is crucial in showing that the Metropolis-Hastings random walk and lazy random walks are lumpable with respect to the partition $F$.
%We assert that if two trees in $\TT_n$ share the same shape, they invariably have the same number of neighbours with a specific shape.

\begin{lemma}[\textbf{Identical shape tree neighbourhood}]\label{lem:F_j^xn=F_j^yn}
Let $x$ and $y$ be trees in $\TT_n$, with $n \ge 3$, such that they have the same shape, i.e. $x, y \in F_i$.
Then for all $j \in \{1, 2, \dots, v\}$ we have $|F_j^{x}| = |F_j^{y}|$.
\end{lemma}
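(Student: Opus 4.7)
The plan is to exploit the fact that trees of the same shape are \emph{isomorphic as rooted binary plane graphs}, differing only by a permutation of leaf labels, and that SPR moves are defined purely in terms of the graph structure (cut an edge, suppress the resulting degree-2 vertex, regraft on another edge). Since tree shape is itself invariant under leaf relabelling, any bijection $N(x)\to N(y)$ that commutes with the shape-forgetting map will immediately give $|F_j^x|=|F_j^y|$.

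More concretely, because $x,y\in F_i$, there exists a permutation $\sigma$ of $L=\{1,\ldots,n\}$ such that relabelling the leaves of $x$ according to $\sigma$ produces $y$. I would first make this precise: $\sigma$ induces an automorphism of $\TT_n$, call it $\Sigma:\TT_n\to\TT_n$, which sends a tree $t$ to the tree obtained by applying $\sigma$ to its leaf labels. The map $\Sigma$ is a bijection (with inverse induced by $\sigma^{-1}$), and by definition it preserves tree shape: $\Sigma(F_j)=F_j$ for every $j\in\{1,\ldots,v\}$, since shape is exactly the equivalence class under arbitrary leaf relabelling.

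Next, I would show that $\Sigma$ intertwines SPR neighbourhoods, i.e.\ $\Sigma(N(x))=N(\Sigma(x))=N(y)$. This amounts to observing that the three SPR operations described in the paragraph following Figure~\ref{fig:spr_des} reference only structural features (edges, the root, degree-3 vertices) and not leaf labels. Hence if $z$ is obtained from $x$ by one of the three rSPR operations on some prescribed edges, then $\Sigma(z)$ is obtained from $\Sigma(x)=y$ by the very same rSPR operation applied to the image edges, which makes $\Sigma(z)\in N(y)$. The same argument in the reverse direction gives $\Sigma^{-1}(N(y))\subseteq N(x)$, so restricting $\Sigma$ yields a bijection $\Sigma|_{N(x)}:N(x)\to N(y)$.

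Finally, combining the two facts, $\Sigma$ restricted to $N(x)$ is a shape-preserving bijection onto $N(y)$, so
\[
|F_j^x|=|N(x)\cap F_j|=|\Sigma(N(x)\cap F_j)|=|N(y)\cap F_j|=|F_j^y|
\]
for every $j$. The only mildly delicate step is formalising that a prescribed SPR move on $x$ "transports" to the analogous SPR move on $y$; this is essentially the content of Figure~\ref{fig:shapeSPR} in the excerpt, and is most cleanly phrased by noting that SPR operations are defined on the unlabelled plane tree structure, which is identical for $x$ and $y$ by hypothesis. No case analysis between the three types of SPR moves is needed beyond pointing out that each one is purely structural.
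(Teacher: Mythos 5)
Your proof is correct and rests on the same core idea as the paper's: SPR operations are defined purely on the unlabelled structure, so they can be ``mirrored'' between two trees of the same shape. Your explicit relabelling bijection $\Sigma$ with $\Sigma(N(x))=N(y)$ and $\Sigma(F_j)=F_j$ is in fact a more careful rendering than the paper's three-line sketch, which only argues that the \emph{set} of reachable shapes coincides and leaves the equality of the counts $|F_j^{x}|=|F_j^{y}|$ implicit; your bijection closes that gap cleanly.
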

\begin{proof}
Suppose we select a tree in $F_i$ and remove its labels.
We perform all possible SPR operations for this tree to obtain all possible shapes from the initial tree.
Therefore, these shapes represent all the tree forms that 
$x$ and $y$ can potentially reach.
Since all SPR operations performed on the tree without labels can also be applied to 
$x$ and $y$, we thereby obtain the desired result.   
\end{proof}

Now we are ready to prove that the Metropolis-Hastings random walk is lumpable with respect to the tree shape partition $\Bar{F}$.

\begin{theorem}[\textbf{Metropolis-Hastings is lumpable with respect to tree shapes}]
\label{teo:shape_lump_MH}
The SPR Metropolis-Hastings random walk is lumpable with respect to the tree shape partition $\bar{F} :=\{F_1, F_2, \dots, F_v\}$.
\end{theorem}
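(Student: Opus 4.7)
The plan is to verify the row-sum criterion for lumpability directly: for any two trees $x, y$ belonging to the same shape class $F_i$ and any shape class $F_j$, I will show that
\[
\sum_{z \in F_j} p_{\text{MH}}(x, z) \;=\; \sum_{z \in F_j} p_{\text{MH}}(y, z).
\]
The key structural facts I will lean on are: (i) by Proposition~\ref{prop_song_nei}, the neighbourhood size $|N(\cdot)|$ depends only on the shape, so there is a constant $\nu_k$ with $|N(w)| = \nu_k$ for every $w \in F_k$; and (ii) by Lemma~\ref{lem:F_j^xn=F_j^yn}, $|F_j^x| = |F_j^y|$ whenever $x, y \in F_i$, for every $j$.

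First I would split the sum into its off-diagonal and self-loop contributions, writing
\[
\sum_{z \in F_j} p_{\text{MH}}(x, z) \;=\; \sum_{z \in F_j^{x}} p_{\text{MH}}(x, z) \;+\; \mathbb{1}\{i = j\}\, p_{\text{MH}}(x, x),
\]
since neighbours of $x$ of shape $F_j$ are exactly $F_j \cap N(x) = F_j^{x}$, and the only tree of shape $F_j$ that is not a neighbour of $x$ but contributes is $x$ itself (only when $j=i$). For any $z \in F_j^{x}$, observation (i) gives $|N(z)| = \nu_j$, so from the definition~\eqref{eq:MH_SPR_RW} the summand is the single expression $\tfrac{1}{\nu_i} \min\{1, \nu_i/\nu_j\}$, and the first sum collapses to
\[
|F_j^{x}| \cdot \frac{1}{\nu_i}\min\!\left\{1,\frac{\nu_i}{\nu_j}\right\}.
\]
By (i) and (ii) this quantity depends only on the shape classes $F_i$ and $F_j$, not on the particular representative $x$; therefore it agrees for $x$ and $y$.

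For the self-loop term (relevant only when $i=j$), I would rewrite
\[
p_{\text{MH}}(x,x) \;=\; 1 - \sum_{k=1}^{v} \sum_{w \in F_k^{x}} \frac{1}{\nu_i}\min\!\left\{1,\frac{\nu_i}{\nu_k}\right\} \;=\; 1 - \sum_{k=1}^{v} \frac{|F_k^{x}|}{\nu_i}\min\!\left\{1,\frac{\nu_i}{\nu_k}\right\},
\]
which, by the same two observations applied to every $k$, is a function of $i$ alone and hence equal for $x$ and $y$. Combining the two pieces yields the row-sum equality and establishes lumpability by Definition~\ref{def:lump}.

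I do not foresee a real obstacle here; the argument is essentially a bookkeeping exercise once Lemma~\ref{lem:F_j^xn=F_j^yn} and the shape-invariance of $|N(\cdot)|$ are in hand. The only point that requires care is making sure the Metropolis acceptance ratio $\min\{1, |N(x)|/|N(z)|\}$ depends on $z$ only through the shape of $z$; this is exactly where invoking $|N(z)|=\nu_j$ for $z \in F_j$ is crucial, because otherwise the summand would vary across $z \in F_j^{x}$ and the decomposition above would fail.
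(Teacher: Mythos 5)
Your proposal is correct and follows essentially the same route as the paper: both verify the row-sum criterion using Lemma~\ref{lem:F_j^xn=F_j^yn} together with the fact that $|N(\cdot)|$ is constant on each shape class, so that the off-diagonal contribution collapses to $|F_j^{x}|\cdot\frac{1}{\nu_i}\min\{1,\nu_i/\nu_j\}$ (the paper phrases this as a two-case split on whether $|N(x)|\ge|N(z')|$, which your single $\min$ expression subsumes). One small point in your favour: you explicitly account for the self-loop term $p_{\text{MH}}(x,x)$ when $j=i$, which the paper's displayed computation in~\eqref{eq:shape_lump_1} silently omits, so your bookkeeping is in fact slightly more complete.
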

\begin{proof}
Fix $j \in \{1,2, \dots, v\}$ and $i \in \{1,2, \dots, v\}$.
For all $x, y \in F_i$, by  \eqref{eq:MH_SPR_RW} we have
\begin{equation}\label{eq:shape_lump_1}
\sum_{z \in F_j} p(x, z) - \sum_{z \in F_j} p(y, z) = \sum_{z \in F_j^{x}} \frac{1}{|N(x)|}\min\left\{1, \frac{|N(x)|}{|N(z)|} \right\} - \sum_{z \in F_j^{y}} \frac{1}{|N(y)|}\min\left\{1, \frac{|N(y)|}{|N(z)|} \right\}\,.
\end{equation}

Then we can have two possible situations that we will analyse separately.
First, suppose that $|N(x)| \ge |N(z')|$, for $z' \in F_j$. Since we have for all $k \in \{1,2,\dots, v\}$ and $\phi , \phi' \in F_k$ that $|N(\phi)| = |N(\phi')|$, we can continue the computation in~\eqref{eq:shape_lump_1} and by Lemma~\ref{lem:F_j^xn=F_j^yn} obtain the following
\begin{equation*}
\sum_{z \in F_j} p(x, z) - \sum_{z \in F_j} p(y, z) = \frac{|F_j^{x}| - |F_j^{y}|}{|N(x)|} = 0 \,.    
\end{equation*}

Hence we would have the Metropolis-Hastings random walk lumpable with respect to $\bar{F}$.

Now we suppose that $|N(x)| < |N(z')|$, for $z'\in F_j$.
We continue the computation in~\eqref{eq:shape_lump_1} and by Lemma~\ref{lem:F_j^xn=F_j^yn} obtain the following
\begin{equation*}
\sum_{z \in F_j} p(x, z) - \sum_{z \in F_j} p(y, z) =  \frac{|F_j^{x}| - |F_j^{y}|}{|N(z')|} = 0\,.    
\end{equation*}

Hence we have the desired result.

\end{proof}

By the exact computations we did in Theorem~\ref{teo:shape_lump_MH} we obtain the following result

\begin{theorem}[\textbf{Random walk is lumpable with respect to tree shapes}]
\label{teo:shape_lump_LRW}
The lazy random walk is lumpable with respect to the tree shape partition $\bar{F}$.
\end{theorem}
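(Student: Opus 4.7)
The plan is to verify the row sum criterion from Definition~\ref{def:lump} directly, in essentially the same way as in the proof of Theorem~\ref{teo:shape_lump_MH}, but simpler because the lazy random walk transition kernel in~\eqref{eq:simple_SPR_RW} involves no $\min$ and depends on $|N(x)|$ only through the starting tree. Fix $i, j \in \{1, 2, \ldots, v\}$ and pick arbitrary $x, y \in F_i$. I want to show that $\sum_{z \in F_j} p_{\text{RW}}(x, z) = \sum_{z \in F_j} p_{\text{RW}}(y, z)$.

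The first key ingredient is that two trees of the same shape have the same number of SPR-neighbours. This follows from Proposition~\ref{prop_song_nei}, since the right-hand side of~\eqref{eq:song_nei} depends only on the multiset $\{\gamma_x(w_i)\}$, which is shape-invariant; in particular $|N(x)| = |N(y)|$. The second key ingredient is Lemma~\ref{lem:F_j^xn=F_j^yn}, which gives $|F_j^x| = |F_j^y|$ for every shape class $F_j$.

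I would then split into two cases. When $j \neq i$, neither $x$ nor $y$ belongs to $F_j$, so the self-loop contributes nothing and~\eqref{eq:simple_SPR_RW} yields
\begin{equation*}
\sum_{z \in F_j} p_{\text{RW}}(x, z) = \sum_{z \in F_j^{x}} \frac{1 - \rho}{|N(x)|} = \frac{(1-\rho)\,|F_j^{x}|}{|N(x)|} = \frac{(1-\rho)\,|F_j^{y}|}{|N(y)|} = \sum_{z \in F_j} p_{\text{RW}}(y, z),
\end{equation*}
using the two ingredients above. When $j = i$, both $x$ and $y$ contribute a self-loop term $\rho$ to their respective sums, and the remaining neighbour-sum is handled identically:
\begin{equation*}
\sum_{z \in F_i} p_{\text{RW}}(x, z) = \rho + \frac{(1-\rho)\,|F_i^{x}|}{|N(x)|} = \rho + \frac{(1-\rho)\,|F_i^{y}|}{|N(y)|} = \sum_{z \in F_i} p_{\text{RW}}(y, z).
\end{equation*}

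There is no real obstacle here: the proof is a direct specialisation of the argument already written for the Metropolis-Hastings case, and the only small care needed is to treat the diagonal case $j = i$ separately so that the self-loop mass $\rho$ appears on both sides. By Definition~\ref{def:lump}, these two identities establish lumpability with respect to $\bar{F}$.
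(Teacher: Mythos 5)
Your proposal is correct and follows essentially the same route as the paper, which proves this theorem by simply invoking Lemma~\ref{lem:F_j^xn=F_j^yn} together with the transition kernel~\eqref{eq:simple_SPR_RW}; you have merely written out the row-sum verification that the paper leaves implicit, including the (correct and worthwhile) separate treatment of the diagonal case $j = i$ where the self-loop mass $\rho$ appears on both sides.
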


\begin{proof}
By Lemma~\ref{lem:F_j^xn=F_j^yn} and Eq.~\eqref{eq:simple_SPR_RW} we have the desired result. 
\end{proof}

%As we saw in Theorem~\ref{teo:shape_lump_LRW}, the SPR lazy random walk $(X_k)_{k \ge 0}$ is lumpable with respect to the tree shape partition $\bar{F}$.
%Let $(Y_k)_{k \ge 0}$ be the projected process on $\bar{F}$, since $\pi_X (x) = |N(x)|/2|E_n|$ for all $x \in \TT_n$, from Proposition~\ref{prop: sta_lump} we have $\pi_Y (F_i) = |F_i||N(x^i)|/2|E_n|$, for all $i \in \{1, 2, \dots, v \}$, and where $x^i \in F_i$. 
%\textcolor{blue}{The results above regarding the stationary distributions are important because they allow us to recover the original stationary distribution by running the projected chain in a smaller space. This will be further explored in Section~\ref{sec: Emp_Inv} with numerical examples.}

\subsection{Recovering the stationary distribution from the lumpable process}\label{subsec:stat_proj}

As we have seen in Theorem~\ref{teo:shape_lump_MH} and~\ref{teo:shape_lump_LRW}, both the Metropolis-Hastings random walk and the lazy random walk are lumpable with respect to the tree shape partition $\bar{F}$. Hence, the projected chain forms a Markov Chain on $\bar{F}$. 

In the case of the Metropolis-Hastings random walk, the stationary distribution is uniform, given by $\pi_X(x) = 1/|\TT_n|$ for all $x \in \boldsymbol{T}_n$. Thus, by Proposition~\ref{prop: sta_lump}, stationary distribution of the projected chain $(Y_k)_{k \ge 0}$ is given by $\pi_Y (F_i) = |F_i|/|\TT_n|$ for all $i \in \{1, 2, \dots, v\}$.

Now in the lazy random walk case, its stationary distribution is given by $\pi_X (x) = |N(x)|/2|E_n|$ for all $x \in \TT_n$. Hence, by Proposition~\ref{prop: sta_lump}, $\pi_Y (F_i) = |F_i||N(x^i)|/2|E_n|$, for all $i \in \{1, 2, \dots, v \}$ and any $x^i \in F_i$.

The results above regarding the stationary distributions and lumpability are important because they allow us to recover the original stationary distribution from the empirical distribution of the projected chain in a smaller space as we will see in the next result. The empirical measure of a Markov chain $(Z_k)_{k \geq 0}$ is defined as 
\begin{equation}\label{eq:empirical}
    \hat{\mu}_{Z_k} (x) := \frac{1}{k} \sum_{i = 1}^k \mathds{1}_{\{Z_i = x\}} \quad \text{for all } x \in \mathcal{S} \,.
\end{equation}%This will be further explored in Section~\ref{sec: Emp_Inv} with numerical examples.

\begin{proposition}\label{prop:sta-Lump_emp}
%Consider the original process $(X_k)_{k \ge 0}$ as an SPR Metropolis-Hastings or lazy random walk with $\hat{\mu}_k$ as its empirical measure and $\pi_X$ as its stationary distribution. Let $(Y_k)_{k \ge 0}$ be the projected chain on the shape partition $\bar{F}:=\{F_1, F_2, \dots, F_v\}$, with $\hat{\nu}_k$ as its empirical measure. 
For every $x \in F_i$, we define $\hat{\eta}_k(x):= \hat{\mu}_{Y_k}(F_i) \times |F_i|^{-1}$ for all $k \in \mathbb{N}$. Then, 
\begin{equation*}
    || \hat{\eta}_{k} - \pi_X || \to 0\,,
\end{equation*}
as $k \to \infty$.
\end{proposition}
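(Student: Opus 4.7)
The plan is to reduce the convergence statement on $\boldsymbol{T}_n$ to the ordinary ergodic theorem for the projected chain on $\bar{F}$. The key algebraic observation is that within each shape class $F_i$ every tree carries the same stationary mass under $\pi_X$: for the Metropolis--Hastings chain this is immediate from $\pi_X \equiv 1/|\boldsymbol{T}_n|$, and for the lazy random walk it follows from Proposition~\ref{prop_song_nei} together with Lemma~\ref{lem:F_j^xn=F_j^yn}, which imply that $|N(x)|$ depends only on the shape of $x$. In either case, Proposition~\ref{prop: sta_lump} gives $\pi_X(x) = \pi_Y(F_i)/|F_i|$ for every $x \in F_i$.

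With that identity in hand, the first step is to rewrite the total variation distance cleanly. For any $x \in F_i$,
\begin{equation*}
|\hat{\eta}_k(x) - \pi_X(x)| = \left| \frac{\hat{\mu}_{Y_k}(F_i)}{|F_i|} - \frac{\pi_Y(F_i)}{|F_i|} \right| = \frac{1}{|F_i|}\, |\hat{\mu}_{Y_k}(F_i) - \pi_Y(F_i)|.
\end{equation*}
Summing over $x \in F_i$ collapses the $1/|F_i|$ factor, and then summing over $i$ yields
\begin{equation*}
\| \hat{\eta}_k - \pi_X \| = \tfrac{1}{2}\sum_{i=1}^v |\hat{\mu}_{Y_k}(F_i) - \pi_Y(F_i)| = \| \hat{\mu}_{Y_k} - \pi_Y \|.
\end{equation*}
So the problem reduces to showing that the empirical measure of the projected chain converges to $\pi_Y$ in total variation.

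The second step is to invoke the ergodic theorem for $(Y_k)_{k \ge 0}$. Theorems~\ref{teo:shape_lump_MH} and~\ref{teo:shape_lump_LRW} guarantee that $(Y_k)_{k \ge 0}$ is itself a Markov chain, and irreducibility and aperiodicity descend from the original chain on $\boldsymbol{T}_n$ (any shape class contains at least one tree, and the projection of an irreducible, aperiodic chain on a finite state space is irreducible and aperiodic on the partition). Since $\bar{F}$ is finite, the strong law of large numbers for Markov chains yields $\hat{\mu}_{Y_k}(F_i) \to \pi_Y(F_i)$ almost surely for every $i$, and because the sum defining the total variation has only finitely many terms, $\| \hat{\mu}_{Y_k} - \pi_Y \| \to 0$ almost surely, hence in probability, as $k \to \infty$. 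Combined with the displayed identity above, this gives the conclusion.

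The only genuinely delicate point is verifying that the projected chain inherits irreducibility and aperiodicity and that the convergence mode claimed in the statement (which is written as a deterministic limit of a random quantity) is to be read as ``in probability'' or ``almost surely''. Everything else is routine manipulation of the stationary distribution under the shape partition; no further combinatorial input on $G_n$ is needed.
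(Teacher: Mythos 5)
Your proof is correct and follows essentially the same route as the paper's: both establish that $\pi_X$ is constant on each shape class, use Proposition~\ref{prop: sta_lump} to identify $\hat{\eta}_k$ with the lumped empirical measure, collapse the total variation distance to $\| \hat{\mu}_{Y_k} - \pi_Y \|$, and conclude by the ergodic theorem for the projected Markov chain. Your version is in fact slightly more careful than the paper's, since you spell out why the lazy random walk's stationary distribution is shape-constant (via Proposition~\ref{prop_song_nei}) and note that irreducibility, aperiodicity, and the almost-sure mode of convergence need to be checked, points the paper leaves implicit.
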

\begin{proof}
For both processes we have $\pi_X(x) = \pi_X(x')$ for any $x, x' \in F_i$ and we remind the reader that, from Proposition~\ref{prop: sta_lump}, $\mu_Y(F_i) = \sum_{x \in F_i} \pi_X(x)$.
Thus we have
\begin{equation}\label{eq:new_sec11}
\begin{split}
|| \hat{\eta}_{k} - \pi_X || & = \frac{1}{2} \sum_{x \in \TT_n} | \hat{\eta}_k (x) - \pi_X (x) | = \frac{1}{2} \sum_{i = 1}^v \sum_{x \in F_i} |\hat{\eta}_k(x) - \pi_X(x)|
\\
& = \frac{1}{2} \sum_{i = 1}^v | \hat{\mu}_{Y_k}(F_i) - \mu_Y(F_i) | = || \hat{\mu}_{Y_k} - \mu_Y ||\,.
\end{split}
\end{equation}
where we have used, in the third equality of~\eqref{eq:new_sec11}, the definition of $\hat{\eta}_k$ and the fact that $\pi_X$ is constant on $F_i$, for each $i$.

Since both processes are lumpable with respect to $\bar{F}$ by Theorems~\ref{teo:shape_lump_MH},~\ref{teo:shape_lump_LRW}, and Proposition~\ref{prop: sta_lump}, we have that $|| \hat{\mu}_{Y_k} - \mu_Y ||$ goes to zero as $k \to \infty$. This completes the proof. 
\end{proof}

\section{$\varepsilon$-Lumpability for tree-valued Markov chains}
\label{sec:eps_tree_lumpability}

% \begin{definition}[\textbf{Lumping error and $\epsilon$-lumpability} ]
% Consider again a partition $ \bar{S} = \{E_1, \ldots,  E_K\}$ of $\mathcal{S}$.
% For $x, y \in E_i$, define the \textbf{lumping error} as
% \begin{equation}
% \label{eq:lumping_error}
%     R(x, y) = \sum_{z \in E_j} p(x, z) - \sum_{z \in E_j} p(y, z).
% \end{equation}
% When $|R(x, y)| \leq \epsilon$ for every pair $x, y$ and every $E_j$, $j \neq i$, we say the Markov chain is $\epsilon$-almost lumpable with respect to $\bar{S}$~\citep{Bittracher2021}.
% \end{definition}

%\subsection{Clade-$\varepsilon$-lumpability of tree-valued Markov chains}
%\label{sec:clade_lumpability}

In contrast with the results in Section~\ref{sec:tree_lumpability}, we now explore a situation where exact lumpability is not attainable.
In particular, we show that a tree-valued Markov chain need not necessarily induce a Markov chain on the space of clades, $\boldsymbol{C_n}$.
This section discusses the conditions for $\varepsilon$-lumpability to hold and bound the error.

Let us recall the definition of lumping error. Consider a Markov process on a state space $\mathcal{S}$ with a partition $\bar{S} = \{E_1, \ldots,  E_h\}$. Then for any $x,y \in E_i$ we define
\[  |R_{i, j} (x,y)| =  \Big| \sum_{z \in E_j} p(x, z) - p(y, z) \Big| \,, \]
When $|R_{i,j}(x, y)| \leq \varepsilon$ for every pair $x, y$ and every $i, j \in \{1,2, \dots, h\}$, we define  $\varepsilon$ as the lumping error for a Markov chain with respect to the partition $\bar{S}$ (where $\varepsilon \in (0,1)$). We are always interested in obtaining the smallest possible value of $\varepsilon$ that satisfies the condition.

% Suppose $(X_k)_{k\geq 0}$ is a Markov chain on $\boldsymbol{T}_n$, with generator $\boldsymbol P_w$.
% To say that the process $(X_{c,t})_{t \geq 0}$ is also a Markov chain, is to say that that $(X_k)_{k\geq 0}$ is~\textbf{lumpable} with respect to $\bar{S}_n(c)$.

% For $x, y \in S_1(c)$, we can rewrite the lumpability condition as:
% \begin{equation}
% \label{eq:lumpability_for_clades}
%  \sum_{z \in S_0(c)} p(x, z) = \sum_{z \in S_0(c)} p(y, z),
% \end{equation}

\subsection{$\varepsilon$-Lumpability for the Metropolis-Hastings random walk}  

We start by investigating the lumpability of the Metropolis-Hastings random walk with respect to the clade partition.
With the results obtained in section~\ref{sec:prelim2}, we are now equipped with the necessary tools to analyse the lumping error associated with this process.
It will become evident from the forthcoming results that the lumping error tends to zero as the number of leaves increases, particularly for small clades.

First, we analyse the lumping error for the process as it transitions from the partition containing the clade, $S_1(c)$, to its complement, $S_0(c)$.
Essentially, this lumping error indicates that determining the probability of the process moving from $S_1(c)$ to $S_0(c)$ requires knowledge of the specific tree in which the process currently is, a requirement that is unnecessary for the lumped situation.
Theorem~\ref{teo:error_lump_mh} provides explicit bounds on the lumping error of the Metropolis-Hastings random walk with respect to clades.

\begin{theorem}[\textbf{Bounds for the lumping error in the Metropolis-Hastings random walk}]
\label{teo:error_lump_mh}
Consider the Metropolis-Hastings random walk $(X_k)_{k \ge 0}$ and the partition $\bar{S}:=\{S_0(c), S_1(c)\}$ of $\TT_n$.
Then the lumping error for $(X_k)_{k \ge 0}$ with respect to the partition $\bar{S}$ is evaluated, for $|c| = 2$,
\begin{equation*}
    \varepsilon = \begin{cases}
     \dfrac{2n-5}{3n^2-13n +14} -  \dfrac{5}{6(4(n-2)^2 - 2\sum_{j=1}^{n-2} \lfloor \log_2 (j+1) \rfloor)}, \quad \text{for } 4 \le n \le 8,
     \vspace{10pt}\\ 
     \dfrac{8n-22}{3n^2 -11n +8} - \dfrac{5(6n - 16)}{6(4(n-2)^2 - 2\sum_{j=1}^{n-2} \lfloor \log_2 (j+1) \rfloor)}, \quad \text{for } n \ge 9\,.
    \end{cases}
\end{equation*}
For $3 \le |c| \le \lfloor n^{1/2} \rfloor$ and $n \ge 9$,
\begin{equation*}
    \varepsilon = \frac{-8|c|^2 + 8|c|n + 6|c| - 8n -2}{3n^2 -2|c|^2 +2|c|n-15n+16} - \frac{5(-8|c|^2 + 8|c|n - 2(n-3)(|c|-1) + 6|c| -8n-2)}{6(4(n-2)^2 - 2\sum_{j=1}^{n-2} \lfloor \log_2 (j+1) \rfloor)}\,.
\end{equation*}
%where $h_{n, |c|} = -8|c|^2 + 8|c|n + 6|c| - 8n -2$ and $g_{n, |c|} = -8|c|^2 + 8|c|n - 2(n-3)(|c|-1) + 6|c| -8n-2$.
% \begin{equation*}
%     \varepsilon = \begin{cases}
%      \varepsilon(S_1(c), S_0(c)), \text{ for } 4 \le n < 7,
%      \\
%      \varepsilon(S_0(c), S_1(c)), \text{ for } n \ge 7\,.   
%     \end{cases}
% \end{equation*}
% For $|c| = 2$ we have
% \begin{equation*}
% \begin{split}
% & \varepsilon(S_0(c), S_1(c)) = \frac{2n-5}{3n^2-13n +14} -  \frac{5}{6(4(n-2)^2 - 2\sum_{j=1}^{n-2} \lfloor \log_2 (j+1) \rfloor)}
% \\
% & \varepsilon(S_1(c), S_0(c)) = \frac{8n-22}{3n^2 -11n +8} - \frac{5(6n - 16)}{6(4(n-2)^2 - 2\sum_{j=1}^{n-2} \lfloor \log_2 (j+1) \rfloor)} \,,
% \end{split}
% \end{equation*}
% and for $|c| \ge 3$,
% \begin{equation*}
% \begin{split}
% & \varepsilon(S_0(c), S_1(c)) = \frac{2n}{3n^2-13n +14}
% \\
% & \varepsilon(S_1(c), S_0(c)) = \frac{h_{n,|c|}}{3n^2 -2|c|^2 +2|c|n-15n+16}- \frac{5g_{n,|c|}}{6(4(n-2)^2 - 2\sum_{j=1}^{n-2} \lfloor \log_2 (j+1) \rfloor)} \,,
% \end{split}    
% \end{equation*}
% where $h_{n, |c|} = -8|c|^2 + 8|c|n + 6|c| - 8n -2$ and $g_{n, |c|} = -8|c|^2 + 8|c|n - 2(n-3)(|c|-1) + 6|c| -8n-2$.
\end{theorem}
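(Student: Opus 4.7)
The plan is to unpack the lumping error into cases indexed by $(i,j) \in \{0,1\}^2$ and feed them into the combinatorial estimates developed in Section~\ref{sec:prelim2}. Since $p_{\text{MH}}(x,\cdot)$ is a probability distribution, $\sum_{z \in S_0(c)} p_{\text{MH}}(x,z) + \sum_{z \in S_1(c)} p_{\text{MH}}(x,z) = 1$, which yields $R_{i,0}(x,y) = -R_{i,1}(x,y)$ and reduces the problem to controlling, for each $i \in \{0,1\}$,
\begin{equation*}
  \max_{x,y \in S_i(c)} |R_{i,1-i}(x,y)| \;\leq\; \max_{x \in S_i(c)} \sum_{z \in S_{1-i}(c)} p_{\text{MH}}(x,z) \;-\; \min_{y \in S_i(c)} \sum_{z \in S_{1-i}(c)} p_{\text{MH}}(y,z).
\end{equation*}
The stated $\varepsilon$ then emerges by retaining the larger of the two bounds produced by $i=0$ and $i=1$.

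For the upper bound on the maximum I would use the identity $p_{\text{MH}}(x,z) = 1/\max(|N(x)|, |N(z)|) \leq 1/|N(x)|$, reducing the sum to $|A_0^{x,c}|/|N(x)|$ when $i=1$ and to $|B_1^{x,c}|/|N(x)|$ when $i=0$. The maximum over $x$ is then supplied directly by Lemma~\ref{lem:max_A_N} (for $i=1$), and by Lemma~\ref{lem:max_B_N} in the case $|c|=2$, or by Lemma~\ref{lem:B_1^xc} combined with Corollary~\ref{max_min_nei} in the case $|c|\geq 3$ (for $i=0$). For the lower bound on the minimum I would combine $p_{\text{MH}}(y,z) \geq 1/M_n$, where $M_n := \max_{w\in\TT_n} |N(w)| = 4(n-2)^2 - 2\sum_{j=1}^{n-2}\lfloor \log_2(j+1)\rfloor$ by Corollary~\ref{max_min_nei}, with the minimum of $|A_0^{y,c}|$ or $|B_1^{y,c}|$. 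From Lemma~\ref{lem:A_0}, the minimum of $|A_0^{y,c}|$ is attained when $\gamma_y(w_c) = n-3$, giving $6n-16$ for $|c|=2$ and its analogue for $|c|\geq 3$; from Corollary~\ref{cor:B_1^x,2} and Lemma~\ref{lem:B_1^xc} the minimum of $|B_1^{y,c}|$ is $1$ for $|c|=2$ and $0$ or $2(|c|-1)$ for $|c|\geq 3$.

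The regime splits in the statement would then emerge naturally. The dichotomy $|c|=2$ versus $|c|\geq 3$ mirrors the two cases of Lemma~\ref{lem:A_0}. The cutoff $n \leq 8$ versus $n \geq 9$ arises from comparing the two candidate bounds produced by the $i=0$ and $i=1$ analyses and retaining the larger one: for small $n$ the $i=0$ direction (entry into $S_1(c)$) dominates, while for $n \geq 9$ the $i=1$ direction (exit from $S_1(c)$) takes over. The assumption $|c| \leq \lfloor n^{1/2}\rfloor$ in the $|c|\geq 3$ regime is what ensures that the extremal configuration of Lemma~\ref{lem:max_A_N}, in which $\phi_c^x$ sits as a ladder directly below the root alongside another ladder of $n-|c|$ leaves, is actually realisable inside $\TT_n$ for the given $n$ and $|c|$.

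The main obstacle is sharpening the lower bound on $\min_y \sum_z p_{\text{MH}}(y,z)$ beyond the naive $1/M_n$: when the minimising $y$ has $|N(y)|$ near the minimum $m_n = 3n^2-13n+14$, the sum $\sum_{z \in A_0^{y,c}} 1/\max(|N(y)|,|N(z)|)$ calls for a nontrivial estimate of the neighbour sizes $|N(z)|$ in order to produce the precise coefficient $5/(6M_n)$ appearing in the statement, rather than the coarser $1/M_n$. A secondary but tedious step is verifying the boundary $n=8 \leftrightarrow n=9$ by directly comparing the two candidate bounds at the transition, together with checking that the argmin and argmax trees coexist with the prescribed clade $c$ at that value of $n$.
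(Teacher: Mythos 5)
Your decomposition and your route through Lemmas~\ref{lem:max_A_N}, \ref{lem:A_0}, \ref{lem:B_1^xc}, \ref{lem:max_B_N} and Corollary~\ref{max_min_nei} match the paper's proof almost step for step, including the identity $R_{i,0}=-R_{i,1}$ (Remark~\ref{rem:Lump_error}) and the mechanism behind the $n\le 8$ versus $n\ge 9$ split: the paper compares the two candidate bounds $\delta_{1,0}$ and $\delta_{0,1}$ and locates the crossover near $n\approx 8.87$ for $|c|=2$, while for $3\le|c|\le\lfloor n^{1/2}\rfloor$ it invokes a dedicated monotonicity result (Lemma~\ref{lem:aux1}) to get $\delta_{1,0}>\delta_{0,1}$ for $n\ge 9$. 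The one genuine gap is exactly the one you flag and leave unresolved: the coefficient $5/6$. The paper imports it from Lemma 5.5 of \cite{Whidden2017}, which says that for SPR-adjacent trees the ratio of neighbourhood sizes satisfies $|N(y)|/|N(z)|\ge 5/6$; hence $p_{\text{MH}}(y,z)=|N(y)|^{-1}\min\{1,|N(y)|/|N(z)|\}\ge \tfrac{5}{6}|N(y)|^{-1}\ge \tfrac{5}{6}M_n^{-1}$. Without that external input you cannot reproduce the displayed formula. Note, incidentally, that your fallback $p_{\text{MH}}(y,z)\ge 1/M_n$ is valid and is a \emph{larger} lower bound than $5/(6M_n)$ (not coarser), so it would establish $\varepsilon$-lumpability with a strictly smaller $\varepsilon$ --- a legitimate strengthening, but not the stated expression.

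Two smaller corrections. First, the hypothesis $|c|\le\lfloor n^{1/2}\rfloor$ has nothing to do with realisability of the extremal tree of Lemma~\ref{lem:max_A_N} (that configuration exists for any $2\le|c|\le n-1$); in the proof it is used only in the final comparison $\delta_{1,0}>\delta_{0,1}$ via Lemma~\ref{lem:aux1}, and the paper's remark motivates the restriction on practical grounds. Second, for the $i=0$, $|c|\ge 3$ direction the paper does not subtract any minimum term: since Lemma~\ref{lem:B_1^xc} guarantees some $y\in S_0(c)$ with $|B_1^{y,c}|=0$, it simply bounds $R_{0,1}(x,y)\le |B_1^{x,c}|/(3n^2-13n+14)\le 2n/(3n^2-13n+14)$, and this is the quantity fed into Lemma~\ref{lem:aux1}. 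Your outline is otherwise sound.
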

%%%%

\begin{remark}\label{rem:Lump_error}
In Theorem~\ref{teo:error_lump_mh},  we compute upper bounds for $R_{1,0}(x, y)$ and $R_{0,1}(x, y)$ for all $x,y \in S_1(c)$ and $x,y \in S_0(c)$, respectively, from which we can directly derive upper bonds to $R_{1,1}(x, y)$ and $R_{0,0}(x, y)$. Indeed, note that for any $x, y \in S_1(c)$, we have
\begin{equation*}
\begin{split}
|R_{1,1}(x, y)| & = \Big| \sum_{z \in S_1(c)} p(x,z) - p(y,z) \Big|  = \Big| 1 - \sum_{z \in S_0(c)} p(x,z) - 1 + \sum_{z \in S_0(c)} p(y,z) \Big|
\\
& = \Big| \sum_{z \in S_0(c)} p(y, z) - p(x, z) \Big| = |R_{0,1}(x,y)|\,.
\end{split}    
\end{equation*}
\end{remark}

\begin{proof}[Porof of Theorem \ref{teo:error_lump_mh}]
We start the proof computing an upper bound to $R_{1,0}(x,y)$ for any $x,y \in S_1(c)$, which we will denote by $\delta_{1,0}$. It is worth to remember that for any $x \in S_1(c)$, we set $A_1^{x,c}:=N(x) \cap S_1(c)$ and $A_0^{x,c}:= N(x) \setminus A_1^{x,c}$.  
For all $x, y \in S_1 (c)$, by \eqref{eq:lumping_error} and \eqref{eq:MH_SPR_RW}, we find
\begin{equation}\label{eq:lumping_error_c_1}
\begin{split}
R_{1, 0}(x,y) & = \sum_{z \in A_0^{x,c}} p(x, z) - \sum_{z \in A_0^{y,c}} p(y, z)
\\
& = \sum_{z \in A_0^{x,c}} \frac{1}{|N(x)|}\min\left\{1, \frac{|N(x)|}{|N(z)|} \right\} - \sum_{z \in A_0^{y,c}} \frac{1}{|N(y)|}\min\left\{1, \frac{|N(y)|}{|N(z)|} \right\}
\\
& \le \frac{|A_0^{x,c}|}{|N(x)|} - \frac{5 |A_0^{y,c}|}{6|N(y)|}\,.
\end{split}
\end{equation}
We obtain the last inequality in~\eqref{eq:lumping_error_c_1} by Lemma 5.5 in \cite{Whidden2017}, which tightens the bound on the ratio of neighbourhood sizes. 

Now we will analyse separately the case $|c| = 2$ and $3 \le |c| \le \lfloor n^{1/2} \rfloor$. We begin with the case $3 \le |c| \le \lfloor n^{1/2} \rfloor$. By Lemma~\ref{lem:max_A_N} and~\eqref{eq:lumping_error_c_1} we obtain the following
\begin{equation}\label{eq:R_c(x,y)1}
R_{1, 0}(x,y) \le \frac{-8|c|^2 + 8|c|n + 6|c| -8n-2}{3n^2 -2|c|^2 +2|c|n-15n+16} - \frac{5|A_0^{y,c}|}{6|N(y)|}\,. 
\end{equation}

By Lemma~\ref{lem:A_0} and the fact for all $y \in \TT_n$ we have $\gamma_y(w) \leq n-3$, we obtain
\begin{equation}\label{eq:Ny-Ny'-Nphi}
\begin{split}
|A_0^{y,c}|& =-8|c|^2 + 8|c|n - 2\gamma_{y}(w_c)(|c|-1) + 6|c| -8n-2 
\\
& \leq 8|c|^2 + 8|c|n - 2(n-3)(|c|-1) + 6|c| -8n-2\,. 
\end{split}
\end{equation}

Thus by Corollary~\ref{max_min_nei}, \eqref{eq:lumping_error_c_1} \eqref{eq:R_c(x,y)1} and~\eqref{eq:Ny-Ny'-Nphi} we obtain
\begin{equation}\label{eq:R_c(x,y)_upper}
\begin{split}
&R_{1, 0}(x,y) \le %\leq \frac{-8|c|^2 + 8|c|n + 6|c| -8n-2}{3n^2 -2|c|^2 +2|c|n-15n+16} - \frac{5a_{n,c}^y}{6(4(n-2)^2 - 2\sum_{j=1}^{n-2} \lfloor \log_2 (m+1) \rfloor)} 
\\
& \frac{-8|c|^2 + 8|c|n + 6|c| -8n-2}{3n^2 -2|c|^2 +2|c|n-15n+16} - \frac{5(-8|c|^2 + 8|c|n - 2(n-3)(|c|-1) + 6|c| -8n-2)}{6(4(n-2)^2 - 2\sum_{j=1}^{n-2} \lfloor \log_2 (j+1) \rfloor)} \,.
\end{split}    
\end{equation}

% By the same techniques that we did to obtain~\eqref{eq:R_c(x,y)_upper}, we have
% \begin{equation*}
% \begin{split}
% & R_{S_1(c), S_0(c)}(x,y) \ge  
% \\
% & \frac{-8|c|^2 + 8|c|n + 6|c| -8n-2}{3n^2 -2|c|^2 +2|c|n-15n+16} - \frac{5(-8|c|^2 + 8|c|n - 2(n-3)(|c|-1) + 6|c| -8n-2)}{6(4(n-2)^2 - 2\sum_{j=1}^{n-2} \lfloor \log_2 (j+1) \rfloor)} \,.  
% \end{split}
% \end{equation*}

% If $|c| \le n^{1-\varepsilon}$ then we obtain
% \begin{equation*}
%     \lim_{n \to \infty} R_c (x, y) = 0 \,.
% \end{equation*}

For the case that $|c| = 2$, we obtain by Lemma~\ref{lem:max_A_N} and~\eqref{eq:lumping_error_c_1} the following
\begin{equation*}\label{eq:R_c(x,y)2}
R_{1, 0}(x,y) \le \frac{8n-22}{3n^2 -11n +8} - \frac{5|A_0^{y,c}|}{6|N(y)|}\,. 
\end{equation*}

By Lemma~\ref{lem:A_0} and the fact for all $y \in \TT_n$ we have $\gamma_y(w) \leq n-3$, we obtain
\begin{equation*}\label{eq:Ny-Ny'-Nphi_2}
\begin{split}
|A_0^{y,c}|& =8n-22 - 2\gamma_y(w_c)
\\
& \geq 8n-22 - 2(n-3) = 6n - 16\,. 
\end{split}
\end{equation*}

Then by the same techniques we did for the case that $|c| \ge 3$ we obtain 
\begin{equation*}\label{eq: R_2(x,y)_MH}
R_{1, 0}(x,y) \le \frac{8n-22}{3n^2 -11n +8} - \frac{5(6n - 16)}{6(4(n-2)^2 - 2\sum_{j=1}^{n-2} \lfloor \log_2 (j+1) \rfloor)}\,. 
\end{equation*}

We will now compute an upper bound for $R_{0,1}(x,y)$ for all $x, y \in S_0 (c)$, which we will denote by $\delta_{0,1}$. We start when $3 \le |c| \le \lfloor n^{1/2}\rfloor$. We recall that for $x \in S_0(c)$ we set $B_1^{x,c}:= N(x) \cap S_1(c)$ and $B_0^{x,c}:=N(x) \setminus B_1^{x,c}$. For all $x, y \in S_0 (c)$, by \eqref{eq:lumping_error} and \eqref{eq:MH_SPR_RW} we have that
\begin{equation}\label{eq:lumping_error_S0_1}
\begin{split}
R_{0, 1}(x,y) & = \sum_{z \in B_1^{x,c}} p(x, z) - \sum_{z \in B_1^{y,c}} p(y, z)
\\
& \leq \sum_{z \in B_1^{x,c}} \frac{1}{|N(x)|}\min\left\{1, \frac{|N(x)|}{|N(z)|} \right\}  \le \frac{|B_1^{x,c}|}{3n^2-13n +14}
%\\
%& \leq \frac{2(n -|c| -|I|) + 2(|c|-1)}{3n^2-13n +14} 
\\
& \leq \frac{2n}{3n^2-13n +14}  \,.
\end{split}
\end{equation}
In the first inequality in~\eqref{eq:lumping_error_S0_1} we use Lemma~\ref{lem:B_1^xc}, since there exists $y \in S_0(c)$, which $|B_1^{y,c}|=0$. In the second inequality in~\eqref{eq:lumping_error_S0_1} we applied Corollary~\ref{max_min_nei} and Lemma~\ref{lem:B_1^xc}, since for all $x \in S_0(c)$ we have $|B_1^{x,c}| \leq 2n$.

Now for the case that $|c| = 2$, we obtain by Lemma~\ref{lem:max_B_N} and~\eqref{eq:lumping_error_c_1} the following
\begin{equation*}
R_{0,1}(x,y) \le \frac{2n-5}{3n^2-13n +14} - \frac{5}{6|N(y)|}\,.
\end{equation*}

Hence by Corollary~\ref{max_min_nei}, for all $x, y \in S_0(c)$ we obtain
\begin{equation*}
R_{0, 1}(x,y) \le \frac{2n-5}{3n^2-13n +14} - \frac{5}{6(4(n-2)^2 - 2\sum_{j=1}^{n-2} \lfloor \log_2 (j+1) \rfloor)}\,.    
\end{equation*}

Now we are prepared to determine the lumping error, specifically identifying the larger of the two. Initially, we analyse the case when $|c| = 2$. We deduce that  $\delta_{1,0} - \delta_{0,1}$ is a polynomial that asymptotically approaches 0 as $n$ increases to infinity. For all $n \ge 4$, the analysis yields the following results:
\begin{equation}\label{eq:R1-R0}
\begin{split}
\delta_{1,0} - \delta_{0,1} & \ge \frac{8n -22}{3n^2 - 11n + 8} - \frac{2n -2}{3n^2 - 13n +14} - \frac{5(6n - 15)}{6(3n^2 -11n + 8)}
\\
& \ge \frac{18n -57}{6(3n^2 - 11n + 8)} - \frac{2n - 2}{3n^2 - 13n + 14} = f(n) \,.
\end{split}
\end{equation}
In the first inequality in~\eqref{eq:R1-R0}, we used the fact that $6(4(n-2)^2 - 2\sum_{j=1}^{n-2} \lfloor \log_2 (j+1) \rfloor) \ge 3n^2 -11n +8$, by Corollary~\ref{max_min_nei}.  

From~\eqref{eq:R1-R0}, $f(n)$ has one real root for $n \ge 4$, which is approximately $8.8716$ and goes to zero with $n \to \infty$, then we can conclude that for all $n \ge 9$, we have $\delta_{1,0} > \delta_{0,1}$. 

For $3 \le |c| \le \lfloor n^{1/2} \rfloor$ we find 
\begin{equation}\label{eq:gn}
\begin{split}
& \delta_{1,0} - \delta_{0,1} 
\\
& > \frac{h_{n,|c|}}{3n^2 -2|c|^2 +2|c|n-15n+16}- \frac{5g_{n,|c|}}{6(3n^2 -2|c|^2 +2|c|n-15n+16)} - \frac{2n}{3n^2-13n +14} 
\\
& = \frac{-8c^2 + 18cn - 24c -18n + 28}{6(3n^2 -2|c|^2 +2|c|n-15n+16)} - \frac{2n}{3n^2-13n +14}
\\
& > \frac{36n -116}{6(3n^2 - 9n -2)} - \frac{2n}{3n^2-13n +14} = g(n) \,.
\end{split}
\end{equation}
In~\eqref{eq:gn}, the first inequality we obtain by Corollary~\ref{max_min_nei} and the last by Lemma~\ref{lem:aux1} (Appendix~\ref{sec:app_proofs}).  

From~\eqref{eq:gn}, $g(n) > 0$ for $n \ge 9$, since it has no real root for $n \ge 9$ and goes to zero with $n \to \infty$, then we can conclude that for all $n \ge 9$, we have $\delta_{1,0} > \delta_{0,1}$.

% We obtain
% \begin{equation*}
%     \lim_{n \to \infty} R_{c_0} (x, y) = 0 \,.
% \end{equation*}
  
\end{proof}

\begin{remark}
We choose $|c| \le \lfloor n^{1/2} \rfloor$ to facilitate practical applications.
For example, practitioners usually examine processes in clade space to determine whether they can halt the original process by assessing the average standard deviation of clade frequencies; see~\cite{Lakner2008}.
Since tracking large clades with (usually) low probability does not aid the goal of reducing dimension, it is not worthwhile to consider large clades.   
\end{remark}

\subsection{$\varepsilon$-Lumpability for the lazy random walk}  

We will now compute the lumping error for a lazy random walk with transition probabilities as described in~\eqref{eq:simple_SPR_RW}.
The invariant distribution for this process is not uniform, which makes it interesting to study the behaviour of processes more similar to those which are employed in practice, where the target distribution is rarely uniform.
The techniques employed for this computation closely mirror those used for the Metropolis-Hastings random walk.

\begin{theorem}[\textbf{Bounds for the lumping error in the lazy random walk}]
\label{teo:error_lump_rw}
Consider the lazy random walk $(X_k)_{k \ge 0}$ and the partition $\bar{S}:=\{S_0(c), S_1(c)\}$ for $\TT_n$. Then the lumping error for any for $(X_k)_{k \ge 0}$ with respect to the partition $\bar{S}$ is evaluated, for $|c| = 2$,
\begin{equation*}
    \varepsilon = \begin{cases}
    (1-\rho) \Big( \dfrac{2n-5}{3n^2-13n +14} -  \dfrac{5}{6(4(n-2)^2 - 2\sum_{j=1}^{n-2} \lfloor \log_2 (j+1) \rfloor)} \Big), \quad \text{for } 4 \le n \le 8,
     \vspace{10pt}\\ 
     (1-\rho) \Big(\dfrac{8n-22}{3n^2 -11n +8} - \dfrac{5(6n - 16)}{6(4(n-2)^2 - 2\sum_{j=1}^{n-2} \lfloor \log_2 (j+1) \rfloor)} \Big), \quad \text{for } n \ge 9\,.
    \end{cases}
\end{equation*}
For $3 \le |c| \le \lfloor n^{1/2} \rfloor$ and $n \ge 9$,
\begin{equation*}
    \frac{\varepsilon}{1-\rho} = \frac{-8|c|^2 + 8|c|n + 6|c| - 8n -2}{3n^2 -2|c|^2 +2|c|n-15n+16} - \frac{5(-8|c|^2 + 8|c|n - 2(n-3)(|c|-1) + 6|c| -8n-2)}{6(4(n-2)^2 - 2\sum_{j=1}^{n-2} \lfloor \log_2 (j+1) \rfloor)}\,.
\end{equation*}
% \begin{equation*}
%     \varepsilon = \begin{cases}
%      \varepsilon(S_1(c), S_0(c)), \text{ for } 4 \le n < 7,
%      \\
%      \varepsilon(S_0(c), S_1(c)), \text{ for } n \ge 7\,.   
%     \end{cases}
% \end{equation*}
% For $|c| = 2$ we have
% \begin{equation*}
% \begin{split}
% & \varepsilon(S_0(c), S_1(c)) = (1 - \rho) \Big( \frac{2n-5}{3n^2-13n +14} -  \frac{5}{6(4(n-2)^2 - 2\sum_{j=1}^{n-2} \lfloor \log_2 (j+1) \rfloor)} \Big)
% \\
% & \varepsilon(S_1(c), S_0(c)) = (1 - \rho) \Big( \frac{8n-22}{3n^2 -11n +8} - \frac{5(6n - 16)}{6(4(n-2)^2 - 2\sum_{j=1}^{n-2} \lfloor \log_2 (j+1) \rfloor)} \Big) \,,
% \end{split}
% \end{equation*}
% and for $|c| \ge 3$,
% \begin{equation*}
% \begin{split}
% & \varepsilon(S_0(c), S_1(c)) = (1 - \rho) \frac{2n}{3n^2-13n +14}
% \\
% & \varepsilon(S_1(c), S_0(c)) = (1 - \rho) \Big( \frac{h_{n,|c|}}{3n^2 -2|c|^2 +2|c|n-15n+16}- \frac{5g_{n,|c|}}{6(4(n-2)^2 - 2\sum_{j=1}^{n-2} \lfloor \log_2 (j+1) \rfloor)} \Big) \,,
% \end{split}    
% \end{equation*}
% where $h_{n, |c|} = -8|c|^2 + 8|c|n + 6|c| - 8n -2$ and $g_{n, |c|} = -8|c|^2 + 8|c|n - 2(n-3)(|c|-1) + 6|c| -8n-2$.
\end{theorem}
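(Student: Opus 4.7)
The proof strategy mirrors that of Theorem~\ref{teo:error_lump_mh}, since the lazy random walk differs from the Metropolis--Hastings chain only by the absence of the Hastings acceptance ratio and the presence of a constant multiplier $(1-\rho)$. For $x, y \in S_1(c)$, equation~\eqref{eq:simple_SPR_RW} gives the exact identity
\begin{equation*}
R_{1,0}(x,y) = \sum_{z \in A_0^{x,c}} p_{\text{RW}}(x,z) - \sum_{z \in A_0^{y,c}} p_{\text{RW}}(y,z) = (1-\rho)\left(\frac{|A_0^{x,c}|}{|N(x)|} - \frac{|A_0^{y,c}|}{|N(y)|}\right),
\end{equation*}
with no $\min$ appearing. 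I would then upper bound the first term via Lemma~\ref{lem:max_A_N} and lower bound the second by combining Lemma~\ref{lem:A_0} (with $\gamma_y(w_c) \le n-3$) with Corollary~\ref{max_min_nei}; treating the cases $|c|=2$ and $3 \le |c| \le \lfloor n^{1/2}\rfloor$ separately then reproduces the two displayed formulas up to the factor $(1-\rho)$.

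For $x, y \in S_0(c)$, the analogous substitution yields
\begin{equation*}
R_{0,1}(x,y) = (1-\rho)\left(\frac{|B_1^{x,c}|}{|N(x)|} - \frac{|B_1^{y,c}|}{|N(y)|}\right),
\end{equation*}
which I would control using Lemma~\ref{lem:B_1^xc} (for $|c|\ge 3$, exploiting that $|B_1^{y,c}|$ can vanish) together with Lemma~\ref{lem:max_B_N} and Corollary~\ref{cor:B_1^x,2} for the case $|c|=2$, combined with Corollary~\ref{max_min_nei} to bound $|N(y)|$ from above. The resulting upper bound $\delta_{0,1}$ has the same functional shape as in the Metropolis--Hastings proof, scaled by $(1-\rho)$.

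To identify $\varepsilon$ as the larger of $\delta_{1,0}$ and $\delta_{0,1}$, I would invoke exactly the polynomial comparison already carried out in the MH case: after cancelling the common factor $(1-\rho)$, the auxiliary functions $f(n)$ and $g(n)$ that control the sign of $\delta_{1,0} - \delta_{0,1}$ coincide with those used in the proof of Theorem~\ref{teo:error_lump_mh}, so the conclusion $\delta_{1,0} > \delta_{0,1}$ for $n \ge 9$ transfers directly. The principal point of care is the retention of the $5/6$ constant in the stated bound: this factor is not strictly required by the lazy random walk calculation (whose transition probabilities are exact rather than needing Lemma 5.5 of \cite{Whidden2017}), but is kept so that $\varepsilon$ is precisely $(1-\rho)$ times the Metropolis--Hastings lumping error; any tightening of that constant would only strengthen the bound without affecting the comparison. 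Thus the main obstacle is purely bookkeeping, namely ensuring that the identification of the maximising tree (via Lemmas~\ref{lem:max_A_N} and~\ref{lem:max_B_N}) and the worst-case bounds on $\gamma_y(w_c)$ carry over unchanged when the Hastings ratio is dropped.
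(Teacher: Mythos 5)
Your proposal is correct and matches the paper's intent exactly: the paper's own proof is a one-line deferral to the techniques of Theorem~\ref{teo:error_lump_mh}, and your write-up simply makes explicit what that deferral entails (the $\min$ terms disappear, a factor $(1-\rho)$ comes out, and the same case analysis and $\delta_{1,0}$ versus $\delta_{0,1}$ comparison carries over). Your observation that the $5/6$ constant from Lemma 5.5 of \cite{Whidden2017} is not needed for the lazy walk but is retained as a valid (slightly looser) bound matching the stated formula is accurate and a useful clarification the paper does not spell out.
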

%%%%

\begin{proof}
The proof follows closely the same techniques of the Theorem~\ref{teo:error_lump_mh}.

\end{proof}

\subsection{Improving estimation of clade probabilities from a $\varepsilon$-lumpable process}
\label{sec:aux_pro}

In this session we present one of the main results of our paper: how to employ the $\varepsilon$-lumpability with respect to a much smaller space and obtain a good approximation for the stationary of the original process in our setting.
%how to employ a Markov process on a space with a smaller dimension to obtain a good approximation for the stationary of the original process.

It is important to note that, by Theorems~\ref{teo:error_lump_mh} and~\ref{teo:error_lump_rw}, the Metropolis-Hastings and lazy random walks with respect to the partition $\bar{S}:=\{S_0(c), S_1(c)\}$ are $\varepsilon$-lumpable and the lumping error goes to zero as the number of leaves $n$ grows to infinity, particularly for a small clade, i.e., $|c| \le n^{1-\delta}$ for some $\delta > 0$.
This suggests that, for sufficiently large $n$, the projected process will be close to a Markov process, but unfortunately not Markovian.

We are interested in estimating  $\pi_X(S_1(c)) = \sum_{x \in S_1(c)} \pi_X(x)$, and to avoid confusion on the notation, we define $\eta_X := (\pi_X(S_0(c)), \pi_X(S_1(c)))$.
We will explain how to obtain a reliable estimation of this probability $\eta_X$ using an auxiliary process that resembles the projected process on $\bar{S}$.

This  auxiliary process, denoted by $(\tilde{Y}_k)_{k \geq 0}$, and taking values in the same state space $\bar{S}$, will be a Markov chain, both aperiodic and irreducible, with its probability transition matrix $\tilde{P}$ specified in what follows. The goal of defining this auxiliary process is to create a Markov chain ``close'' to the non-Markovian projected process $(Y_k)_{k \geq 0}$ so its empirical measure (see \eqref{eq:empirical}) is an approximation to $\eta_X$.
Remember that $\hat{\mu}_{\tilde{Y}_k}$ is the empirical measure of $(\tilde{Y}_k)_{k \geq 0}$. Hence, we have
\begin{equation}\label{eq:2exp}
 || \hat{\mu}_{\Tilde{Y}_k} - \eta_X || \le || \hat{\mu}_{\Tilde{Y}_k} - \hat{\mu}_{Y_k} || + || \hat{\mu}_{Y_k}  - \eta_X || \,.   
\end{equation}
In~\eqref{eq:2exp}, the second portion of the sum goes to zero with $k \to \infty$, i.e. the empirical measure of $(Y_k)_{k \ge 0}$ converges to $\eta_X$, since it is smaller or equal to $|| \hat{\mu}_{X_k} - \pi_X||$, where $\hat{\mu}_{X_k}$ is the empirical measure of the original process. 
Thus we will only analyse the first term.
The main idea is to determine the transition probability matrix $\Tilde{P}$ so that we minimise our error in estimating the desired probability in~\eqref{eq:2exp}.
Then, we compute and bound this term as follows:
\begin{equation}\label{eq:2exp_2}
\begin{split}
|| \hat{\mu}_{\Tilde{Y}_k} - \hat{\mu}_{Y_k} || & = \frac{1}{2} \sum_{j = 0}^1 | \hat{\mu}_{\Tilde{Y}_k}(S_j(c)) - \hat{\mu}_{Y_k} (S_j(c)) | =  \frac{1}{2} \sum_{j = 0}^1 \Big| \hat{\mu}_{\Tilde{Y}_k}(S_j(c)) - \sum_{x \in \boldsymbol{T}_n} \hat{\mu}_{X_{k-1}}(x) p(x, S_j(c)) \Big| 
\\
& = \frac{1}{2} \sum_{j = 0}^1 \Big| \sum_{x \in \boldsymbol{T}_n} \hat{\mu}_{X_{k-1}}(x) (\hat{\mu}_{\Tilde{Y}_k}(S_j(c)) - p(x, S_j(c))) \Big| 
\\
& = \frac{1}{2} \sum_{j = 0}^1 \Big| \sum_{x \in \boldsymbol{T}_n} \hat{\mu}_{X_{k-1}}(x) \Big( \sum_{h = 0}^1 \hat{\mu}_{\Tilde{Y}_{k-1}}(S_h(c)) (\Tilde{p}(S_h(c), S_j(c)) -p(x, S_j(c))) \Big) \Big|
\\
& \le \frac{1}{2} \sum_{j = 0}^1 \sum_{i = 0}^1 \sum_{x \in S_i(c)} \sum_{h = 0}^1 \hat{\mu}_{X_{k-1}}(x) \hat{\mu}_{\Tilde{Y}_{k-1}}(S_h(c)) \big| \Tilde{p}(S_h(c), S_j(c)) -p(x, S_j(c)) \big|\,.
\end{split}    
\end{equation}

Let us denote $\Tilde{p}(S_0(c), S_1(c)) = p$ and $\Tilde{p}(S_1(c), S_1(c)) = q$.
The idea is to choose $p$ and $q$ to minimise $|| \Tilde{\mu}_{\Tilde{Y}_k} - \eta_X ||$. However, since this would not be tractable, we choose to minimise the upper bound in \eqref{eq:2exp_2}. Hence, we consider the following minimisation problems:
\begin{equation*}
\begin{split}
& \argmin_{p \in [0,1]} \sum_{x \in \boldsymbol{T}_n} \hat{\mu}_{X_{k-1}}(x)  (|1 - p - p(x, S_0(c))| + |p - p(x, S_1(c))|)  \quad \text{and} 
\\  
& \argmin_{q \in [0,1]} \sum_{x \in \boldsymbol{T}_n} \hat{\mu}_{X_{k-1}}(x) (|1 - q - p(x, S_0(c))| + |q - p(x, S_1(c)|)) \,. 
\end{split}   
\end{equation*}
Since $p(x, S_0(c)) = 1 - p(x, S_1(c))$ we have 
\begin{equation*}
\mathbb{E}[|1 - p - p(X_{k-1}, S_0(c))|] + \mathbb{E}[|p - p(X_{k-1}, S_1(c))|] = 2\mathbb{E}[|p - p(X_{k-1}, S_1(c))|]\,.   
\end{equation*}

We define $M_{ji}$ and $m_{ji}$, for $i, j \in \{0,1\}$, such that $\pr(X_{k} \in S_i(c) | X_{k-1} \in S_j(c)) \in [m_{ji}, M_{ji}]$ for all $k \ge 1$. Hence we can use Theorems~\ref{teo:error_lump_mh} and~\ref{teo:error_lump_rw} to choose the values of $M_{ij}$ and $m_{ij}$, for any $i,j \in \{0,1\}$. To minimise $\mathbb{E}[|p - p(X_{k-1}, S_1)|]$ with respect to $p$, we apply the law of total expectation to obtain
\begin{equation}\label{eq:Exp_min}
\begin{split}
& \mathbb{E}[|p - p(X_{k-1}, S_1(c))|]  = 
\mathbb{E}[|p - p(X_{k-1}, S_1(c))| \mid X_{k-1} \in S_0(c)]\pr[X_{k-1} \in S_0(c)] +
\\
& \mathbb{E}[|p - p(X_{k-1}, S_1(c))| \mid X_{k-1} \in S_1(c)] \pr[X_{k-1} \in S_1(c)]\,. %=
%\\
%& \mathbb{E}[|p - U_{01}|] \mathbb{P}[X_{k-1} \in S_0(c)] + \mathbb{E}[|p - U_{11}|] \mathbb{P}[X_{k-1} \in S_1(c)]\,.
\end{split}
\end{equation}
%In the second equality in~\eqref{eq:Exp_min} we assume that $p(X_{k-1}, S_i(c))$ with $X_{k-1} \in S_j(c)$ has the same probability distribution as $U_{ji}$.

We will focus only on the first portion of the sum in the equality of~\eqref{eq:Exp_min}. This approach is justified by the fact that we do not have the values of the probabilities; however, for any
$c$, it is known that $\pr(X_{k-1} \in S_0(c)) \gg \pr(X_{k-1} \in S_1(c))$, since $S_0(c)$ is much larger than $S_1(c)$, see Theorem 3, part $i)$ in~\cite{Zhu2015}. 

Thus we only have to minimise $\mathbb{E}[|p - p(X_{k-1}, S_1(c))| \mid X_{k-1} \in S_0(c)]$ and that is our next result. To minimise $ \mathbb{E}[|p - p(X_{k-1}, S_1(c))| \mid X_{k-1} \in S_0(c)]$ with respect with $p \in [0,1]$, since we lack lumpability, we treat $\pr( X_k \in S_1(c) \mid X_{k-1} \in S_0(c))$ for all $k \in \mathbb{N}$ as a random variable $Z$, whose support is contained in $[m_{01}, M_{01}]$, as proven in Theorem~\ref{teo:error_lump_mh} and~\ref{teo:error_lump_rw}. We will consider minimising $p$ under the worst-case distribution of $Z$, as stated in the next result:
\begin{proposition}\label{prop:exp_res}
Let $RV[m_{01}, M_{01}]$ be a set of random variables with support contained in $[m_{01}, M_{01}]$. If
\begin{equation}\label{eq:min_mea}
  p^* = \argmin_{p \in [0,1]} \max_{ Z \in RV[m_{01}, M_{01}]} \mathbb{E}[|p - Z|]\,, 
\end{equation}
then $p^*$ is the median of the uniform distribution in $[m_{01}, M_{01}]$:
\begin{equation*}
p^* = \frac{m_{01} + M_{01}}{2}.
\end{equation*}
\end{proposition}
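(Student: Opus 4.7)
The plan is to solve the minimax problem in \eqref{eq:min_mea} by first computing the inner supremum for a fixed $p$, reducing it to a simple one-dimensional optimisation, and then minimising the resulting function of $p$.

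First, I would fix $p \in [0,1]$ and evaluate the inner maximum $\max_{Z \in RV[m_{01}, M_{01}]} \mathbb{E}[|p - Z|]$. Since the map $z \mapsto |p - z|$ is convex on $\mathbb{R}$, its restriction to the interval $[m_{01}, M_{01}]$ attains its maximum at one of the two endpoints, so pointwise
\[
|p - Z| \leq \max\bigl\{|p - m_{01}|,\, |p - M_{01}|\bigr\}
\]
almost surely whenever $Z$ takes values in $[m_{01}, M_{01}]$. Taking expectations gives the upper bound $\mathbb{E}[|p - Z|] \leq \max\{|p - m_{01}|, |p - M_{01}|\}$, and this bound is attained by the Dirac mass at whichever endpoint realises the outer maximum. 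Consequently,
\[
\max_{Z \in RV[m_{01}, M_{01}]} \mathbb{E}[|p - Z|] = \max\bigl\{|p - m_{01}|,\, |p - M_{01}|\bigr\}.
\]

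Next, I would minimise $\varphi(p) := \max\{|p - m_{01}|,\, |p - M_{01}|\}$ over $p \in [0,1]$. The function $p \mapsto |p - m_{01}|$ is non-decreasing on $[m_{01}, 1]$ while $p \mapsto |p - M_{01}|$ is non-increasing on $[0, M_{01}]$, so on the relevant range the two pieces cross exactly at the point where $|p - m_{01}| = |p - M_{01}|$, namely at the midpoint $p^* = (m_{01} + M_{01})/2$. Elementary case analysis (splitting $p < p^*$, $p = p^*$, $p > p^*$) shows that $\varphi$ is decreasing on $[0, p^*]$ and increasing on $[p^*, 1]$, with minimum value $(M_{01} - m_{01})/2$. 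Since $m_{01}, M_{01} \in [0,1]$, $p^*$ lies in $[0,1]$, so the constraint is not binding.

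Finally, I would observe that for the uniform distribution on $[m_{01}, M_{01}]$ the median coincides with the midpoint $(m_{01} + M_{01})/2$, which identifies $p^*$ with the claim of the proposition. The argument is essentially the classical characterisation of the Chebyshev centre of an interval; the only subtlety is making the passage from the abstract family $RV[m_{01}, M_{01}]$ to the two-point worst case rigorous, but this is handled cleanly by the convexity argument above together with the fact that $RV[m_{01}, M_{01}]$ contains all Dirac measures supported on $[m_{01}, M_{01}]$.
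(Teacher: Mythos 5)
Your proposal is correct and follows essentially the same route as the paper's proof: reduce the inner maximisation to a Dirac mass at the endpoint of $[m_{01}, M_{01}]$ farthest from $p$, then minimise $\max\{|p-m_{01}|,|p-M_{01}|\}$ by case analysis to obtain the midpoint. Your version is slightly more explicit in justifying the reduction (via convexity of $z \mapsto |p-z|$ and the fact that the family contains all Dirac measures on the interval), but the substance is identical.
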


%\textcolor{blue}{To minimise $ \mathbb{E}[|p - p(X_{k-1}, S_1(c))| \mid X_{k-1} \in S_0(c)]$ with respect with $p \in [0,1]$, since we lack lumpability, we treat $\pr( X_k \in S_1(c) \mid X_{k-1} \in S_0(c))$ for all $k \in \mathbb{N}$ as a random variable $Z$ whose support is contained in $[m_{01}, M_{01}]$ as proven in Theorem~\ref{teo:error_lump_mh}. We denote the set of probability measures $\mathcal{P}$ from those random variables. Since we do not know the distribution of $Z$, we consider minimise $p$ under the worst-case distribution of  $Z$ as follows:
%\begin{equation}\label{eq:min_mea}
%  p^* = \argmin_{p \in [0,1]} \max_{\substack{\mathbb{P} \in \mathcal{P} \\ Z \in [m_{01}, M_{01}]}} \mathbb{E}_{\mathbb{P}}[|p - Z|] 
%\end{equation}}
\begin{proof}
We note that to maximise in~\eqref{eq:min_mea} for any $p$, it is sufficient to choose $Z$ with the distribution as the Dirac measure $\delta_{z}$ at the value $z \in [m_{01}, M_{01}]$ that achieves $\max_{z \in [m_{01}, M_{01}]} |p-z|$. That is, we choose the value $z \in [m_{01}, M_{01}]$ most distant from $p$ and set $Z$ to be distributed as $\delta_{z}$. Hence, the maximisation problem becomes a deterministic problem. Therefore, it suffices to analyse the three cases where $p \in [0, m_{01}]$, $p \in [m_{01}, M_{01}]$ and $p \in [M_{01}, 1]$. After considering these cases, we find that number equal to $ 2^{-1}(M_{01} + m_{01})$, which is the median of a uniform distribution on $[m_{01}, M_{01}]$.
Ergo we finish our proof.
\end{proof}
%Now to minimise $ \mathbb{E}[|p - p(X_{k-1}, S_1(c))| \mid X_{k-1} \in S_0(c)]$ with respect with $p \in [0,1]$, we obtain that $p$ is equal to the median of $p(X_{k-1}, S_1(c))$. Here we will assume that if $X_{k-1} \in S_0(c)$ then $p(X_{k-1}, S_1(c))$ will have a uniform distribution on the interval $[m_{01}, M_{01}]$. Hence $p:= 2^{-1}(M_{01} + m_{01})$.
Very similar computations can be done for $q$, yielding $q^* = 2^{-1}(M_{01} + m_{01})$.

\section{Empirical investigation}\label{sec: Emp_Inv}

Having studied both exact and approximate lumpability of tree-valued Markov processes, we return to some of the empirical issues raised in the Introduction and conduct experiments to showcase how the results obtained for lumpability and $\varepsilon$-lumpability can illuminate practical issues related to Markov chain Monte Carlo in treespace.
We present illustrations using exact lumpability on the space of shapes (Section~\ref{sec:shape_experiment}), and $\varepsilon$-lumpability on the space of clades (Section~\ref{sec:eps_clade_lump_experiment}), investigating small ($ 4 \leq n \leq 7$) and moderate ($n=50$) numbers of taxa (leaves).
For $\varepsilon$-lumpability in particular, we show how to use the bounds derived in Section~\ref{sec:eps_tree_lumpability} to create an auxiliary chain that leads to better estimates of clade probabilities as detailed in Section~\ref{sec:aux_pro}.

\paragraph{General setup:} In what follows -- unless otherwise stated -- we ran $J = 500$ independent replicates of each chain (original and  projected) for $M = 10000$ iterations each.
We write $(X_k^j)_{k \ge 0}$ for the original chain and $(Y_k^j)_{k \ge 0}$ for the projected chain, with $j \in \{1,2, \dots, J\}$.
% Each Markov chain is characterised by its transition probability matrix, $P_{MH}^{\rho}$ for the original and $Q$ for the projected chain, respectively.
We also define $\hat{\mu}_k^j$ and $\hat{\nu}_k^j$ as the empirical measures of the processes $(X_k^j)_{k \ge 0}$ and $(Y_k^j)_{k \ge 0}$, respectively.

We compare $\hat{\mu}_k^j$ to  $\pi_X$ in terms of total variation distance at each iteration.
For this comparison, we compute the following metrics
\begin{equation*}
\begin{split}
& m_k:= \min_{j \in \{1,2,\dots, J\}} ||\hat{\mu}_k^j - \pi_X ||
\\
& M_k:= \max_{j \in \{1,2,\dots, J\}} ||\hat{\mu}_k^j - \pi_X ||
\\
& E_k:= \frac{1}{J} \sum_{j=1}^{J} ||\hat{\mu}_k^j - \pi_X || \,.
\end{split}
\end{equation*}

% We denote $d(t) = \max_x || P^t(x, \cdot) - \pi(\cdot) ||$, where $|| \mu - \nu||$ stands for total variation distance for two probability measures and $P^t(x, \cdot ) = \Pr_x [X_t = y]$ for $x, y$ in the state space of a process $(X_k)_{k \ge 0}$. Let $\varepsilon > 0$, we set $t_{\text{mix}}(\varepsilon) := \min \{t \ge 0 : d(t) \le \varepsilon \}$ as the total variation mixing. Then by Theorems 12.3 and 12.4 from~\cite{Levin2017}, we have the following bounds for the total variation mixing 
% \begin{equation*}
%  (t_{\text{rel}} - 1) \log\Big(\frac{1}{2\varepsilon}\Big) \leq t_{\text{mix}}(\varepsilon) \leq \log\Big( \frac{1}{\varepsilon \pi_{\text{min}}}\Big) t_{\text{rel}} \,,   
% \end{equation*}
% where $t_{\text{rel}}$ is the relaxation time and $\pi_{\text{min}} = \min_x \pi(x)$.

\subsection{Shapes: moderate dimension reduction and exact lumpability}
\label{sec:shape_experiment}

In this section we will investigate the results in Section~\ref{sec:tree_lumpability} empirically in order to quantify the differences in speed of convergence between the original and lumped processes when exact lumpability is attainable.
As shown in Theorems~\ref{teo:shape_lump_MH} and~\ref{teo:shape_lump_LRW}, both the Metropolis-Hastings and the (lazy) random walk on the SPR graph are lumpable with respect to phylogenetic shapes.
Furthermore, as shown in Table~\ref{tab:}, the space of phylogenetic trees with $n$ leaves ($\TT_n$) is much larger than the space of unlabelled rooted binary trees, denoted by $\boldsymbol{S}_n$.
Combining this information with Lemma 4 from~\cite{Simper2022}, we argue that the lumped chain should mix faster than the original one.

These observations raise the question of whether it is feasible to sample from the lumped chain and obtain an empirical measure for the original space that accurately approximates the original chain's stationary distribution.
Here we demonstrate that such sampling is possible in certain situations, particularly in our case where the process occurs on the SPR graph and the lumping partition is based on the shapes of the trees.  

We conducted experiments for both the lazy version of the Metropolis-Hastings random walk and the lazy random walk on the SPR graph.
For the lazy version of the Metropolis-Hastings random walk on the SPR graph, we consider the following transition probability matrix
\begin{equation}\label{eq:MH_tpm}   
P_{MH}^{\rho} = \rho I + (1-\rho)P_{MH} \,,
\end{equation}
where $P_{MH}$ is transition probability matrix generated by~\eqref{eq:MH_SPR_RW}.
All experiments were conducted with $\rho \in \{0, 0.1, 0.5, 0.9\}$. 
The initial step of our experiment involves constructing the transition probability matrix for the projected chain.
Here we describe this construction for the lazy Metropolis-Hastings random walk on the SPR graph, but the steps are equally applicable to any lazy random walk on the SPR graph.

\paragraph{Generating the lumped transition probability matrix:} Consider $(X_k)_{k \ge 0}$ as a Markov Chain on the SPR graph, denoted by $G(\TT_n, E_n)$, with its transition probability matrix represented by $P_{MH}^{\rho}$ and a stationary distribution $\pi_X$, which is uniform on $\TT_n$.
We set $(Y_k)_{k \ge 0}$ as the projected chain on the tree shape partition of $\TT_n$, $\Bar{F} = \{F_1, F_2, \dots, F_v\}$.
Thus, the process $(Y_k)_{k \ge 0}$ possesses a transition probability matrix $Q$, derived from $P_{MH}^{\rho}$ as follows
\[
 q(F_i, F_j) = p(x, F_j) \,,
\]
by Theorem~\ref{teo:shape_lump_MH} and the lumpability property, where $x \in F_i$ and $i,j \in \{1,2, \dots, v\}$.
Additionally, the stationary distribution $\pi_Y$ of  $(Y_k)_{k \ge 0}$ is defined by $\pi_Y(F_i) = |F_i|/|\TT_n|$, according to Proposition~\ref{prop: sta_lump}.
We generate the new empirical measure $\hat{\eta}_k^j$ as described in Section~\ref{subsec:stat_proj} -- which converges in total variation to $\pi_X$ as shown in Proposition~\ref{prop:sta-Lump_emp} -- and compute the same metrics as above for $\hat{\eta}_k^j$ at each iteration. 

%\medskip
% \textbf{Generating a new empirical measure}

% We will now explain the generation of a new empirical measure from $\hat{\nu}_k^j$, assigning it the domain $\TT_n$ and denoting it as $\hat{\eta}_k^j$. 
% Suppose that at iteration $k$, we have the empirical distribution $\hat{\nu}_k^j$, whose domain is the shape space $\bar{F} = \{F_1, F_2, \dots, F_v\}$, therefore, $\hat{\nu}_k^j$ has dimension $v$.
% To obtain $\hat{\eta}_k^j$, for each $i \in \{1,2, \dots, v\}$, we compute $p_{k}^{F_i} := \hat{\nu}_k^j(F_i) \times |F_i|^{-1}$.
% Then, for each tree $x \in F_i$, we assign  $\hat{\eta}_k^j(x) = p_{k}^{F_i}$.
% The reason for assuming that each partition element shares an identical probability mass when constructing $\hat{\eta}_k^j$ stems from the observation that trees sharing the same shape exhibit identical transition laws to their neighbours in the SPR graph.
% This uniformity is observed in both~\eqref{eq:MH_SPR_RW} and~\eqref{eq:simple_SPR_RW}, and has to do with the fact that the number of neighbours dictates the transition probabilities.

\paragraph{Results:} The experiments herein focus on the SPR graph for trees with $n = 6$.
According to Table~\ref{tab:}, we find that the total number of trees, $\TT_6$, is 945, and the total number of unique shapes is  $|\boldsymbol{S}_6| = 6$.
We executed all the previously detailed steps for values of $\rho$ within the set $\{0, 0.1, 0.5, 0.9\}$, applying both the lazy Metropolis-Hastings random walk and the lazy random walk, which resulted in the generation of the plots presented below.

We begin by presenting the results for the scenario where the process $(X_k)_{k \ge 0}$ follows the Metropolis-Hastings random walk on the SPR graph, denoted as $G_6 = G(\TT_6, E_6)$, for each $\rho \in \{0, 0.1, 0.5, 0.9\}$ and for the projected process $(Y_k)_{k \ge 0}$ on partition $\bar{F}$. 
\begin{figure}[h]
    \centering
    \includegraphics[scale=0.5]{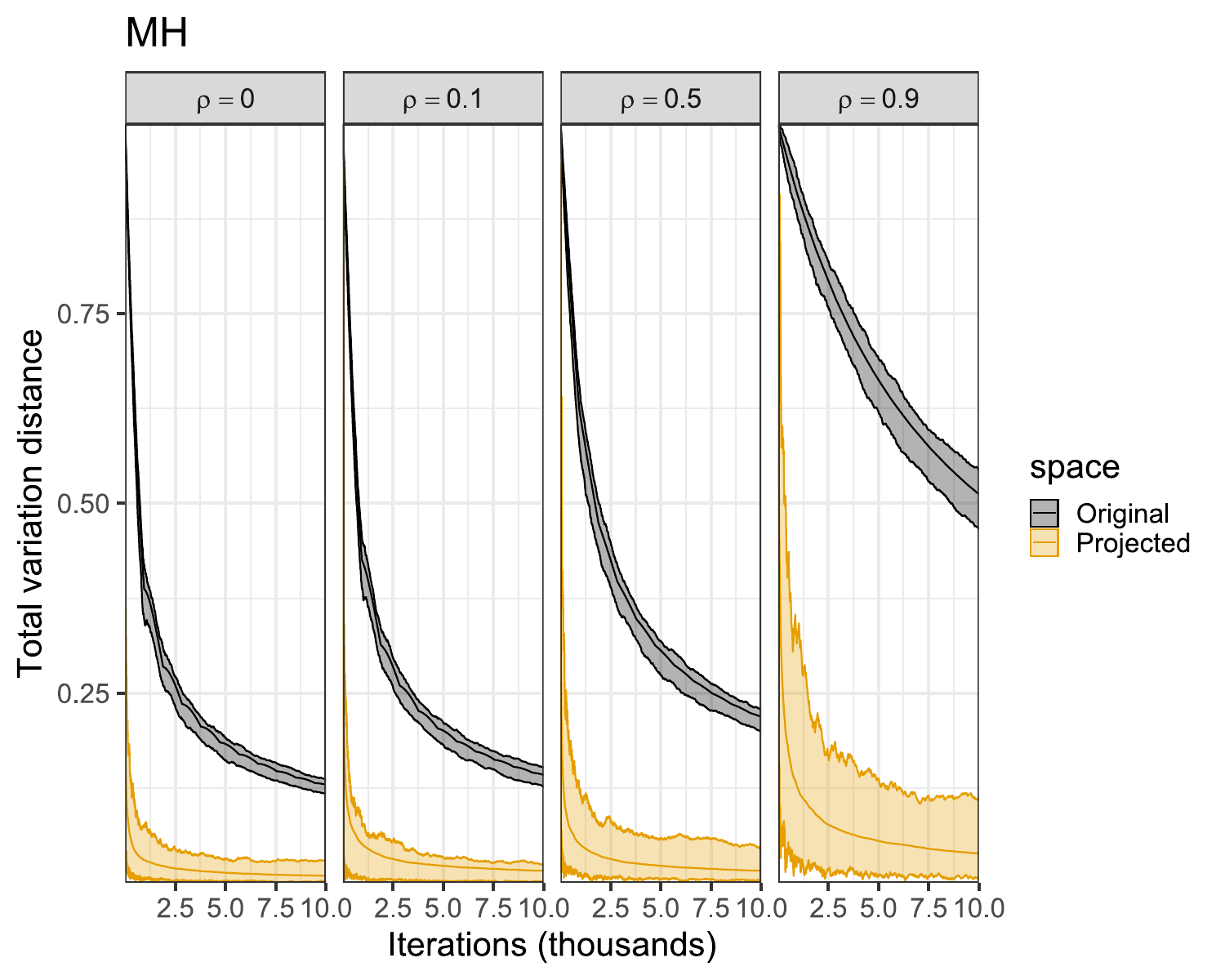}

    \caption{\textbf{Total variation distance per iteration for the shape-projected Metropolis-Hastings (MH)}.
    For each iteration, ranging from 1 to 10,000, we observe the values of $m_k$, $M_k$, and $E_k$ for $\rho \in \{0, 0.1, 0.5, 0.9\}$. }
    \label{fig:MH_tv_err}
\end{figure}
As observed in Figure~\ref{fig:MH_tv_err}, the values of $m_k$, $M_k$, and $E_k$ decrease to zero more rapidly when working with the projected chain for all values of $\rho$. This suggests that utilising the projected chain to sample the stationary distribution results in an approximation closer to the true distribution. As previously mentioned, this outcome is feasible in our context because each point in the partition shares an identical probability mass.

Now, we present the results of our experiments for the scenario in which the process $(X_k)_{k \ge 0}$ is a lazy random walk for each $\rho \in \{0, 0.1, 0.5, 0.9\}$.
In Figure~\ref{fig:RW_tv_err}, it is possible to observe the same phenomenon  depicted in Figure~\ref{fig:MH_tv_err}.
As previously noted, we can draw similar conclusions as those in the context of the Metropolis-Hastings random walk, for the underlying reason remains the same: in the lazy random walk on $G_6$, each point within any given partition has the same probability mass. 

\begin{figure}[h]
    \centering
    \includegraphics[scale=0.5]{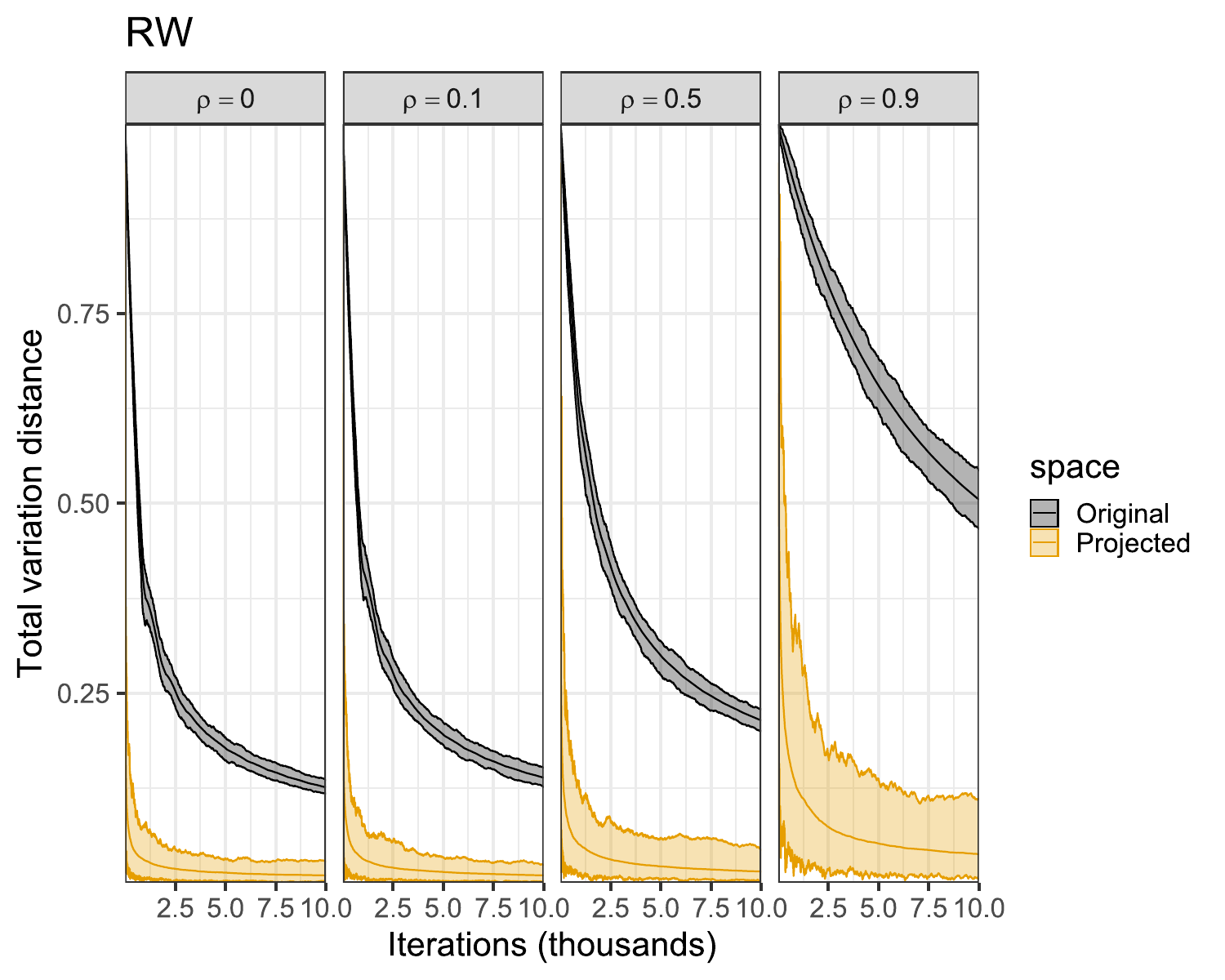}

    \caption{\textbf{Total variation distance per iteration for the shape-projected random walk (RW)}.
    For each iteration, ranging from 1 to 10,000, we observe the values of $m_k$, $M_k$, and $E_k$ for $\rho \in \{0, 0.1, 0.5, 0.9\}$. }
    \label{fig:RW_tv_err}
\end{figure}

\subsection{Clades: aggressive dimension reduction and $\varepsilon$-lumpability}
\label{sec:eps_clade_lump_experiment}

We note that the dimensionality reduction provided by projecting onto shapes, while significant, can still be prohibitive for large $n$.
One might thus wish to project onto an even smaller space.
As shown in Section~\ref{sec:eps_tree_lumpability}, projecting onto clades maintains the interpretability of the process but breaks exact lumpability.
In this experiment, our objective is to estimate the probability of a clade with $|c| = 2$ for cases involving $n = 50$ and $n=100$ leaves, by simulating a smaller Markov Chain with only two states.
% We remind the reader that a process $(X_k)_{k \ge 0}$, characterised by a transition probability matrix as detailed in~\eqref{eq:MH_SPR_RW} and~\eqref{eq:simple_SPR_RW} on the SPR graph, is $\varepsilon$-lumpable with respect to the partition indicating the presence or absence of a specific clade in $\TT_n$.
% This property is substantiated by Propositions~\ref{prop:error_lump_mh} and~\ref{prop:error_lump_rw}, which significantly aids in achieving reliable estimation.
The construction of these auxiliary chains draws upon the lumping error calculated in Theorems~\ref{teo:error_lump_mh} and~\ref{teo:error_lump_rw}.
As before, the projected chain we construct is both aperiodic and irreducible, thereby possessing a unique stationary distribution.
% This description regards the Metropolis-Hastings random walk on the SPR graph; however, the outlined steps are equally applicable to any lazy random walk within this graph.

\paragraph{The auxiliary Markov chain:} As explained at  Section~\ref{sec:aux_pro} our aim is to minimise $|| \hat{\mu}_{\Tilde{Y}_k} - \eta_X ||$ -- see~\eqref{eq:2exp} -- using a two-state auxiliary chain.
Hence, this auxiliary chain $(\tilde{Y}_k)_{k \ge 0}$ can be constructed using Theorems~\ref{teo:error_lump_mh} and~\ref{teo:error_lump_rw} to set the values of its probability transition matrix $\Tilde{P}$.
We denote $\hat{\mu}_{\Tilde{Y}_k}$ as the empirical measure of the process $(\Tilde{Y}_k)_{k \ge 0}$.

Since in our experiments here we have $|c| = 2$, by Proposition~\ref{prop:exp_res} we obtain
\begin{equation*}
\Tilde{p}(S_0(c), S_1(c)) =   \Tilde{p}(S_1(c), S_1(c)) =  \frac{1}{2} \Big(\frac{2n-5}{3n^2-13n +14} + \frac{5}{6(4(n-2)^2 - 2\sum_{j=1}^{n-2} \lfloor \log_2 (j+1) \rfloor)}  \Big) \,. 
\end{equation*}

\paragraph{Results:} We now show how to use the proposed setup to obtain an estimate of the probability of a tree possessing a clade for cases where $n = 50$ and $n=100$, for a clade $c$ with $|c| = 2$.
It is crucial to note that the specific choice of clade does not affect our results; rather, only the size of the clade is relevant in these experiments.
We first present the results for the process $(\Tilde{Y}_k)_{k \ge 0}$, characterised by the transition probability matrix $\tilde{P}$, which was generated as described previously for $n = 50$, $n = 100$ and $|c| = 2$, to evaluate $\eta_X(S_1(c))$ for a Metropolis-Hastings random walk.  

\begin{figure}[h]
    \centering
    \includegraphics[scale=0.5]{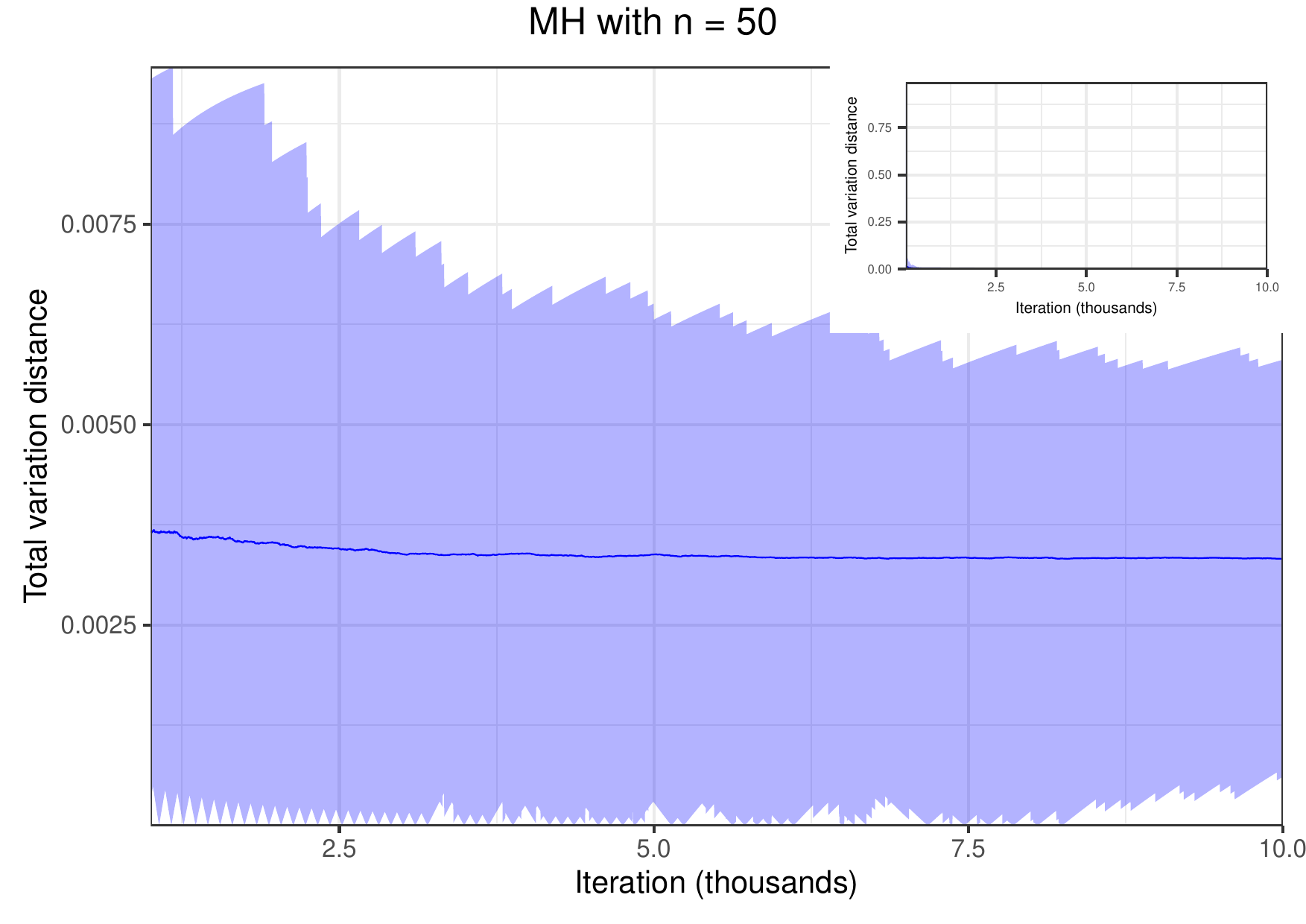}

    \caption{\textbf{Total variation distance per iteration for the clade-projected Metropolis-Hastings(MH) with $n=50$ leaves}.
    The plot on the top left side shows the values of $m_k$, $M_k$ and $E_k$ for each iteration, ranging from 1 to 10.000.
    The main plot shows the same values for iterations ranging from 1.000 to 10.000.}
    \label{fig:tv_err_exp_almost_2}
\end{figure}

Figure~\ref{fig:tv_err_exp_almost_2} shows that $m_t$, $M_t$, and $E_t$ decrease to zero at a fast rate.
We thus argue that the auxiliary chain provides reliable estimation for $\eta_X(S_1(c))$—the probability of the clade $c$.
A significant benefit of utilising the auxiliary chain is its reduced state space size.
Specifically, for the auxiliary chain, the size is now 2, whereas for the original chain, the dimensions is bigger than $10^{80}$.
Consequently, running the auxiliary chain is more feasible than operating the original one. 

To illustrate the accuracy of our estimation regarding the probability of the clade, we present the approach described at the end of Section~\ref{sec:eps_tree_lumpability}.
We have $\eta_X = (|S_0(c)|/|\TT_{50}|, |S_1(c)|/|\TT_{50}|) = (0.9897, 0.0103)$.
For this specific case, we have generated the following transition probability matrix in the manner previously described for the auxiliary process
\begin{equation}\label{eq:P_mat_exp_2}
 \Tilde{P} = \begin{bmatrix}
    1 - \Tilde{p}(S_0(c), S_1(c)) & \Tilde{p}(S_0(c), S_1(c)) \\
    1 - \Tilde{p}(S_1(c), S_1(c)) & \Tilde{p}(S_1(c), S_1(c))
 \end{bmatrix}  = \begin{bmatrix}
     0.9930 & 0.0070 \\
     0.9930 & 0.0070 
 \end{bmatrix}\,.  
\end{equation}

Thus, from~\eqref{eq:P_mat_exp_2}, we derive the following stationary distribution for the auxiliary process, $\mu_{\Tilde{Y}} = (0.9930, 0.0070)$. Given that $||\mu_{\Tilde{Y}} -\eta_X|| = 0.0033$ 
% and
% \begin{equation}\label{eq:eq_rem_ant}
% || \mu_{\Tilde{Y}} -\eta_X || \le ||\mu_{\Tilde{Y}} - \mu_{\Tilde{Y}_t}|| + || \mu_{\Tilde{Y}_t} - \eta_X || \,.    
% \end{equation}
%In~\eqref{eq:eq_rem_ant}, the first component approaches zero as $t$ goes to infinity.
%Regarding the second component
, as illustrated in  Figure~\ref{fig:tv_err_exp_almost_2} we can see that $E_k$ approaches $|| \mu_{\Tilde{Y}} -\eta_X ||$, suggesting we achieve a reliable estimation of the desired probability.

\begin{figure}[h]
    \centering
    \includegraphics[scale=0.5]{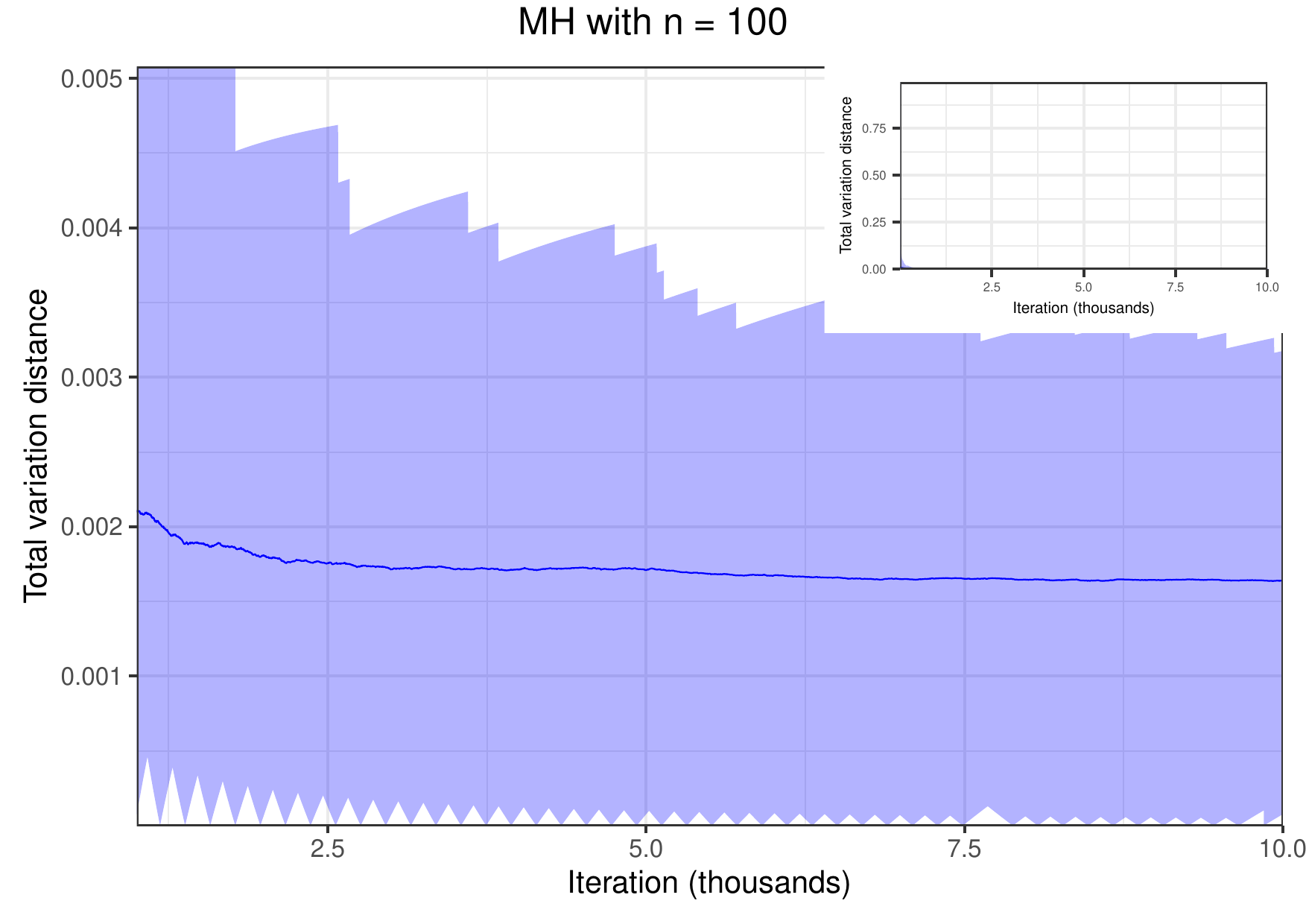}

    \caption{\textbf{Total variation distance per iteration for the clade-projected Metropolis-Hastings(MH) with $n=100$ leaves}.
      The plot on the top left side shows the values of $m_t$, $M_t$ and $E_t$ for each iteration, ranging from 1 to 10.000.
    The main plot shows the same values for iterations ranging from 1.000 to 10.000.}
    \label{fig:tv_err_exp_almost_22}
\end{figure}

Now for Figure~\ref{fig:tv_err_exp_almost_22}  in the same manner as we did for Figure~\ref{fig:tv_err_exp_almost_2}, we have
\begin{equation*}
    \eta_X = (|S_0(c)|/|T_{100}|, |S_1(c)|/|T_{100}|) = (0.9949, 0.0051),
\end{equation*}
and the generated transition probability matrix for the auxiliary process
\begin{equation}\label{eq:P_mat_exp_22}
 \Tilde{P} = \begin{bmatrix}
    1 - \Tilde{p}(S_0(c), S_1(c)) & \Tilde{p}(S_0(c), S_1(c)) \\
    1 - \Tilde{p}(S_1(c), S_1(c)) & \Tilde{p}(S_1(c), S_1(c))
 \end{bmatrix}  = \begin{bmatrix}
     0.9966 & 0.0034 \\
     0.9966 & 0.0034 
 \end{bmatrix}\,.  
\end{equation}

Thus, from~\eqref{eq:P_mat_exp_22}, we obtain $\mu_{\Tilde{Y}} = (0.9966, 0.0034)$.
Given that $|| \mu_{\Tilde{Y}} - \eta_X || = 0.0017$, 
%we observe in~\eqref{eq:eq_rem_ant}, that the first sum component approaches zero as $t$ goes to infinity. 
%Additionally,
as illustrated in  Figure~\ref{fig:tv_err_exp_almost_2}, $E_k$ approaches $|| \mu_{\Tilde{Y}} -\eta_X ||$.
Hence, we achieve a reliable estimation of the desired probability. 
Our estimation of the probability of the clade improves as we increase the number of leaves from 50 to 100.
This estimation tends to approach the true value as $n$ increases.
We believe this trend can be explained by Theorem~\ref{teo:error_lump_mh}, which shows that the lumping error decreases asymptotically to zero as $n$ increases. 

We also evaluate $\eta_X (S_1(c))$ for a lazy random walk with $n = 50$ where $|c| = 2$.
As our methods for estimating the lumping error in the Metropolis-Hastings random walk are similar, we employ the same auxiliary process $(\Tilde{Y}_k)_{k \ge 0}$ with the identical transition probability matrix $\Tilde{P}$. 
\begin{figure}[h]
    \centering
    \includegraphics[scale=0.5]{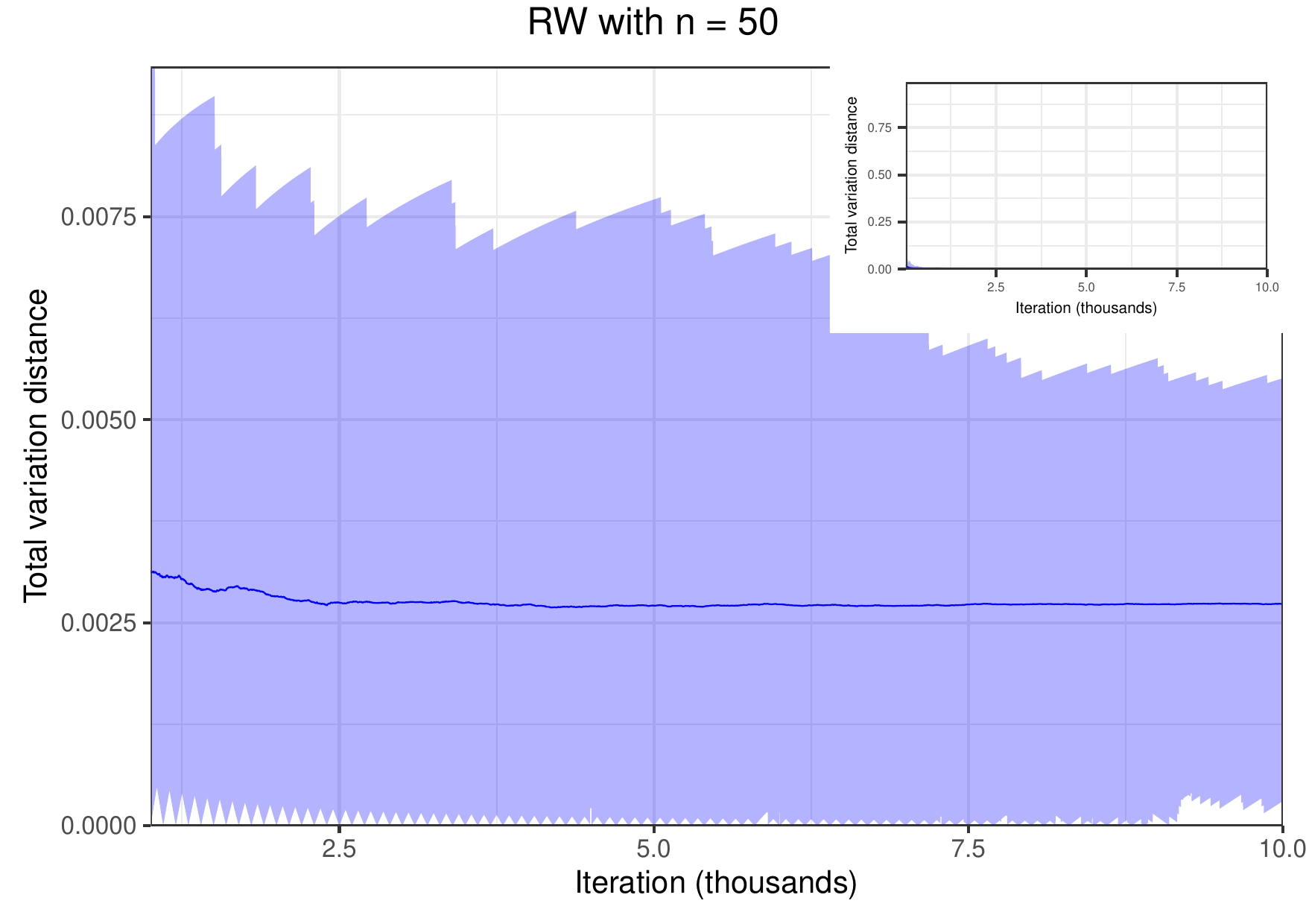}

    \caption{\textbf{Total variation distance per iteration for the clade-projected random walk (RW) with $n=50$ leaves}
    The plot on the top left side shows the values of $m_k$, $M_k$ and $E_k$ for each iteration, ranging from 1 to 10.000. The main one shows the same values, but now, iterations range from 1.000 to 10.000.}
    \label{fig:tv_err_exp_almost_21_RW}
\end{figure}
Unlike the Metropolis-Hastings random walk, where it is possible to analytically compute the probability of having a clade, this computation is not available for the lazy random walk.
To evaluate $\eta_X(S_1(c))$ for the lazy random walk on the SPR graph with $n = 50$, we simulate four independent chains, each with $10^6$ iterations. Since $\eta_X(S_1(c))$ is the same for every clade $c$ of the same cardinality, we observe every clade with $|c| = 2$ across these four chains and calculate the mean frequency to obtain an approximate value for both $\eta_X(S_1(c))$ and $\eta_X(S_0(c))$.
We denote this approximation as $\hat{\eta}_X = (\hat{\eta}_X(S_0(c)), \hat{\eta}_X(S_1(c))$, which will be used in our experiments here. For $n = 50$ we obtained $ \hat{\eta}_X = (0.99, 0.01)$. 
Due to the computational demands of evaluating $\eta_X$, we could not perform the calculation for $n = 100$.

As Figure~\ref{fig:tv_err_exp_almost_21_RW} shows, $m_k$, $M_k$, and $E_k$ decrease to zero at a fast rate.
From this observation, we can conclude that the auxiliary chain provides reliable estimation for $\hat{\eta}_X(S_1(c))$ -- the probability of clade $c$.
To illustrate the accuracy of our estimation regarding the probability of the clade, we refer to the approach described in the experiments with the Metropolis-Hastings random walk.
Since we have $\mu_{\Tilde{Y}} = (0.9930, 0.0070)$ 
%and, in~\eqref{eq:eq_rem_ant}, the first sum component approaches zero as $t$ approaches infinity.
%For the second component, 
as illustrated in  Figure~\ref{fig:tv_err_exp_almost_21_RW} we can see that $E_k$ approaches $|| \mu_{\Tilde{Y}} -\eta_X ||$, thus leading to accurate estimation.

\section{Discussion and Conclusion}
\label{sec:conclusion}

In this paper, we give what is, to the best of our knowledge, the first thorough account of lumpability for tree-valued Markov processes on the SPR graph.
We show it is possible to have exact lumpability on the space of phylogenetic shapes and bound the lumping error when projecting onto clades (subtrees).

Our results help illuminate the observation that clade-projected processes often appear Markov, as shown in Figure~\ref{fig:cladeautocorr} in the Introduction.
As the number of leaves $n$ increases and one restricts attention to relatively small clades, the lumping error tends to zero.
This has implications for applications, for instance when considering MCMC diagnostics.
One could treat the indicators of small clades as approximately Markov and thus estimate clade probabilities and mixing times using well-known results for a two-state discrete-time Markov chain, much in the vein of~\cite{Raftery1992} who approach quantile estimation by assuming lumpability of a continuous Markov process onto $\{0,1\}$.

Our findings also open up the possibility of constructing auxiliary processes on smaller spaces in order to improve Monte Carlo estimation of relevant phylogenetic quantities.
However, Finding a partition that minimises lumping error while retaining some interpretability is an open problem.
Another important open question is how to link lumping error and speed of convergence: if we can bound the lumping error of a given process with a known convergence rate, what can one quantitatively say about the speed of convergence of the projected process?

Finally, the invariant distributions of the processes studied here may not be representative of real-world posterior distributions, so some work should be devoted to understanding how to transport some of the combinatorial results obtained here to more general processes and target distributions.
Overall, we believe the research presented here is a first step towards a more complete understanding of dimension-reduction in tree-valued Markov processes which we hope will be useful in practical applications.

% \begin{figure}[h]
%     \centering
%     \includesvg[scale=0.4]{figures/plot_exp_2_50_zoom_ori_RW.svg}

%     \caption{The plot on the right side shows the values of $m_t$, $M_t$ and $E_t$ for each iteration, ranging from 1 to 10.000. The one on the left shows the same values but now for iterations from 1.000 to 10.000.}
%     \label{fig:tv_err_exp_almost_21_RW}
% \end{figure}

% Now for Figure~\ref{fig:tv_err_exp_almost_22_RW}  in the same manner as we did for Figure~\ref{fig:tv_err_exp_almost_21_RW}, we first obtain $\hat{\eta} = (0.995, 0.005)$ and use this result for our experiment with $n = 100$. We have $||\mu - \eta || = 0.0016$,  consequently, we achieve reliable estimation of the desired value as shown on the right side plot in Figure~\ref{fig:tv_err_exp_almost_22_RW} as the value of $E_t$ closely approximates $||\mu - \hat{\eta}||$.

\section*{Acknowledgements}
We would like to thank Mike Steel, Giulio Iacobelli and Rapha\"el Tinarrage for insightful discussions.
The authors acknowledge financial aid from the School of Applied Mathematics,  Getulio Vargas Foundation.

\bibliography{tree_lumpability}

\appendix
\section{Proofs and technical results}
\label{sec:app_proofs}

\begin{lemma}\label{lem:preserve_d}
Let $x$ be a tree in   $\boldsymbol{T}_n$,  
then for any $y^\prime \in N(f_c(x))$, we have that $d_{rSPR}(g_{\phi_c^x}(y^\prime), x) =1$.
\end{lemma}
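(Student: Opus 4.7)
The plan is to argue by a direct case analysis on the single rSPR operation that transforms $x' := f_c(x)$ into $y'$. Any such operation can be described as: prune a subtree $z$ hanging from some edge $e$ of $x'$, suppress the resulting degree-2 vertex, and regraft $z$ onto some other edge $e'$ of $x'$ (or at the root), following one of the three patterns $i)$, $ii)$, $iii)$ enumerated in the preliminaries. I will translate this operation to a single rSPR operation on $x$ and show that the outcome is precisely $g_c^x(y')$.

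First I would set up the natural correspondence between the combinatorial structure of $x'$ and of $x$. Every edge and internal vertex of $x'$ outside the pendant edge incident to the virtual leaf $l$ has a canonical counterpart in $x$ outside $\phi_c^x$. The pendant edge of $l$ in $x'$ corresponds to the edge of $x$ joining $\phi_c^x$ (at its root) to the rest of $x$. Under this identification, $f_c$ and $g_c^x$ are inverse bijections between $\boldsymbol{T}_k \setminus \{\text{operations touching $\phi_c^x$ internally}\}$ and the portion of $x$ lying outside $\phi_c^x$.

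Next I would split into two cases according to whether the pruned subtree $z \subset x'$ contains the virtual leaf $l$.

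\textbf{Case 1:} $l \notin z$. Then $z$ corresponds, under the identification above, to an identical subtree $\hat z$ of $x$ hanging off the edge of $x$ that corresponds to $e$. I would perform on $x$ the rSPR operation that prunes $\hat z$ along that edge and regrafts it onto the edge of $x$ corresponding to $e'$ (using the edge above $\phi_c^x$ when $e'$ is the pendant edge of $l$, and the root analogue when $e'$ involves the root). Since $\phi_c^x$ is untouched, the result contains $\phi_c^x$ as a clade in exactly the position $l$ occupies in $y'$; collapsing that clade recovers $y'$, so the result equals $g_c^x(y')$.

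\textbf{Case 2:} $l \in z$. Then the subtree of $x$ obtained from $z$ by substituting $\phi_c^x$ for $l$ hangs from the edge of $x$ corresponding to $e$. I would prune this substituted subtree and regraft it onto the edge of $x$ corresponding to $e'$ (using the edge above $\phi_c^x$ or the root variant where appropriate). Because $e'$ lies outside $z$ by definition, it lies outside $\phi_c^x$ as well, so the regraft target exists in $x$ and the operation is a legal rSPR. Again, $\phi_c^x$ survives as a clade and collapsing it yields $y'$, so the output is $g_c^x(y')$.

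To conclude, this shows $d_{rSPR}(g_c^x(y'), x) \le 1$, and since $y' \in N(x')$ implies $y' \ne x'$ and $g_c^x$ is injective, $g_c^x(y') \ne g_c^x(x') = x$, so $d_{rSPR}(g_c^x(y'), x) = 1$. The main obstacle I anticipate is the careful bookkeeping when the rSPR operation interacts with the root (patterns $ii)$ and $iii)$ in the definition), particularly verifying that the translated operation still belongs to one of the three allowed patterns and does not accidentally alter $\phi_c^x$; this is why the case analysis must explicitly cover the subcases in which $e$ or $e'$ is the pendant edge of $l$, and in which the root lies on the path between $e$ and $e'$.
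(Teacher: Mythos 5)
Your proposal is correct and follows essentially the same route as the paper's proof: a case split on whether the pruned subtree of $x'$ contains the virtual leaf $l$, translating the rSPR move back to $x$ in each case. You are in fact slightly more careful than the paper in two places -- explicitly ruling out $g_{\phi_c^x}(y') = x$ via injectivity to upgrade $d_{rSPR} \le 1$ to $d_{rSPR} = 1$, and flagging the root-interaction subcases -- but the underlying argument is the same.
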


\begin{proof}
Let us denote $x^\prime = f_c(x)$. If $y' \in N(x')$, it follows that there exists a subtree within $x'$ that, when regrafted, results in $y'$. This subtree may or may not include the leaf $l$.

Let $g_{\phi_c^x}(y') = y$. The structure of the tree $y$ mirrors that of $y'$, with the significant difference being the replacement of the leaf $l$ by the subtree $\phi_c^x$. By definition, the subtree $\phi_c^x$ is inherently present in $x$.

Initially, we examine the scenario where the subtree regrafted from $x'$ to produce $y'$ does not contain the leaf $l$. Consequently, this identical subtree is present in $x$. Applying the same SPR operation to $x$ yields $y$. Therefore, we deduce that $d_{rSPR}(y, x) = 1$.

In the second case, we denote the subtree regrafted from $x'$ to yield $y'$ as $\phi_l$.

The procedure is as follows: we start with $x'$ and regraft $\phi_l$ such that we obtain $y'$. Consequently, applying $g_{\phi_c^x}$ to $y'$ results in $y$, where in the subtree $\phi_l$, the leaf $l$ is substituted by $\phi_c^x$. We then regraft this newly formed subtree into the same position as $\phi_l$, thereby reconstructing $x$. Thus, we conclude that $d_{rSPR}(y, x) = 1$, completing the proof.
% Now $u$ can be $u=l$ or other node of $x^\prime$. 

% First suppose $u \neq l$ and it is clear that $u$ is an internal node of $x$.
% Then we can make the same operation we did in $x^\prime$ to obtain $y^\prime$ in $x$.

% We could not regraft the subtree in the edges of the clade and since $g_{\phi_c^x}$ preserves the internal structure of $y^\prime$, just taking the leaf $l$ and replacing with $\phi_c^x$, we obtain that $g_{\phi_c^x} = y$ and $d_{r-SPR}(y, x) = 1$.

% Now with $u = l$, we made the r-SPR operation for $x^\prime$ and we have $y^\prime$, then $g_{\phi_c^x} (y^\prime) =y \in N(x, u_c)$ and $d_{r-SPR}(y, x) = 1$.

\end{proof}

\begin{lemma}\label{lem:preserve_d2}
For all $\phi_1, \phi_2 \in \TT_{|c|}$, with $d_{r-SPR}(\phi_1, \phi_2) = k$ we obtain that $d_{rSPR}(h_{x^\prime}(\phi_1), h_{x^\prime}(\phi_2)) = k$, where $k \in \mathbb{Z}_{\ge 0}$. 
\end{lemma}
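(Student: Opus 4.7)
The plan is to establish the two inequalities $d_{rSPR}(h_{x'}(\phi_1), h_{x'}(\phi_2)) \le k$ and $d_{rSPR}(h_{x'}(\phi_1), h_{x'}(\phi_2)) \ge k$ separately.

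For the upper bound, I would argue by induction on $k$, lifting a minimum-length rSPR path from $\phi_1$ to $\phi_2$ in $\TT_{|c|}$ to an rSPR path of the same length in $\TT_n$. The key observation is that the pendant subtree of $h_{x'}(\phi)$ corresponding to $\phi$ can be manipulated independently of the rest of the tree: any rSPR operation on $\phi \in \TT_{|c|}$ producing $\psi$ can be replicated inside this pendant subtree, yielding $h_{x'}(\psi)$. A short case analysis over the three rSPR types in Figure~\ref{fig:spr_des} handles the details; the only subtlety is that the virtual root of $\phi$ becomes an internal degree-3 vertex in $h_{x'}(\phi)$, so operations of types (ii) and (iii) in $\TT_{|c|}$ translate into operations (possibly of type (i)) in $\TT_n$ that make use of the edge connecting the clade subtree to the rest of $h_{x'}(\phi)$.

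For the lower bound, my plan is a projection argument. Define $\pi : \TT_n \to \TT_{|c|}$ as the restriction map sending a tree $T$ to its induced topology on the leaves of $c$ (the minimal subtree of $T$ spanning $c$ with degree-2 vertices suppressed), so that $\pi(h_{x'}(\phi)) = \phi$. The central claim is that $\pi$ is $1$-Lipschitz under rSPR distance: for any rSPR-neighbours $T, T' \in \TT_n$, one has $d_{rSPR}(\pi(T), \pi(T')) \le 1$. Granting this claim, the triangle inequality applied to any minimum-length rSPR path from $h_{x'}(\phi_1)$ to $h_{x'}(\phi_2)$ yields
\[
k = d_{rSPR}(\phi_1, \phi_2) = d_{rSPR}\bigl(\pi(h_{x'}(\phi_1)), \pi(h_{x'}(\phi_2))\bigr) \le d_{rSPR}(h_{x'}(\phi_1), h_{x'}(\phi_2)),
\]
which is the desired inequality.

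To prove the Lipschitz claim, let $S$ be the subtree pruned in the rSPR move $T \to T'$ and set $c_S := (\text{leaves of } S) \cap c$. Since $S$ is a clade of $T$, every $c$-descendant of the MRCA of $c_S$ in $T$ lies in $S$, so $c_S$ is necessarily a clade of $\pi(T)$. A short case analysis then gives the three cases: if $c_S = \emptyset$, the movement of $S$ leaves every pairwise MRCA of $c$-leaves unchanged, so $\pi(T) = \pi(T')$; if $c_S = c$, the internal topology on $c$ is carried inside $S$ and is preserved, again giving $\pi(T) = \pi(T')$; and in the remaining case $\emptyset \subsetneq c_S \subsetneq c$, the tree $\pi(T')$ is produced from $\pi(T)$ by the single rSPR that prunes the clade $c_S$ and regrafts it on the edge of $\pi(T) - c_S$ obtained from the regraft edge of $S$ in $T - S$ by restriction to $c$-leaves followed by suppression of degree-2 vertices. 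I expect the main obstacle to lie in this last case, specifically in verifying that the restriction-then-suppression map from edges of $T - S$ to edges of $\pi(T) - c_S$ is well-defined at the regraft edge and that the resulting single rSPR in $\TT_{|c|}$ really produces $\pi(T')$ rather than requiring a second move.
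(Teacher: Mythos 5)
Your plan is correct, and it is in fact more complete than the argument the paper gives for Lemma~\ref{lem:preserve_d2}. The paper's proof has the same two-sided structure but fills in only one side: it simply \emph{asserts} the lower bound $d_{rSPR}(h_{x'}(\phi_1), h_{x'}(\phi_2)) \ge k$ with no justification, and then rules out strict inequality by noting that $h_{x'}(\phi_1)$ and $h_{x'}(\phi_2)$ differ only inside the pendant copies of $\phi_1$ and $\phi_2$ --- which is your upper-bound lifting argument phrased as a contradiction (your remark about operations of types (ii) and (iii) in $\TT_{|c|}$ becoming type (i) moves across the attaching edge in $\TT_n$ is a real subtlety the paper does not mention). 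Your lower bound is the genuinely new content relative to the paper: the $1$-Lipschitz property of the restriction map $\pi$ under rSPR is exactly the missing justification for the paper's ``$\ge k$'' step, and your case analysis on $c_S = (\text{leaves of } S)\cap c$ is the right way to establish it. The obstacle you flag in the case $\emptyset \subsetneq c_S \subsetneq c$ does resolve: since $S$ is a pendant subtree, $c_S$ is a clade of both $\pi(T)$ and $\pi(T')$, the two restrictions agree on $c\setminus c_S$ and on the internal topology of $c_S$, and the new attachment point of $c_S$ in $\pi(T')$ is simply where the path from the regraft location of $S$ first meets the spanning subtree of $c\setminus c_S$; hence the two restrictions differ by at most one prune-and-regraft of $c_S$ (possibly zero), which is all the Lipschitz claim requires. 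So both halves of your plan close, and the net effect is the same decomposition as the paper's proof with the asserted half actually proved.
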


\begin{proof}
If $d_{rSPR}(\phi_1, \phi_2) = k$ then we have $d_{rSPR}(h_{x^\prime}(\phi_1), h_{x^\prime}(\phi_2)) \ge k$. 

Suppose that
$d_{rSPR}(h_{x^\prime}(\phi_1), h_{x^\prime}(\phi_2)) > k$. 

However, $d_{r-SPR}(h_{x^\prime}(\phi_1), h_{x^\prime}(\phi_2)) > k$ is a contradiction because the new nodes and edges added in $\phi_1$ and $\phi_2$ have the same configuration. Thus the only difference would be in the subtrees $\phi_1$ and $\phi_2$. 

Hence $d_{rSPR}(h_{x^\prime}(\phi_1), h_{x^\prime}(\phi_2)) = k$. 

\end{proof}

Next, we present the proof of Lemma~\ref{lem:card_A_1^xc}.

\begin{proof}[Proof of Lemma~\ref{lem:card_A_1^xc}]
Let us denote $x^\prime = f_c(x)$. First, we will prove that for all $y', z' \in N(x')$ such that $d_{rSPR}(y', z') > 0$, we have $d_{rSPR}(g_{\phi_c^x}(y'), g_{\phi_c^x}(z')) > 0$.

Suppose $y', z' \in N(x')$ such that $d_{rSPR}(y', z') > 0$ and $d_{rSPR}(y, z) = 0$, where $y = g_{\phi_c^x}(y')$ and $z = g_{\phi_c^x}(z')$. Then, we have $d_{rSPR}(f_c(y), f_c(z)) = 0$, and by the definition of $f_c$, we obtain $f_c(y) = y'$ and $f_c(z) = z'$, a contradiction because it would imply $d_{rSPR}(y', z') = 0$.

Clearly, for all $y^\prime, z^\prime \in N(x^\prime)$ such that $d_{r-SPR}(y^\prime, z^\prime) = 0$ then $d_{r-SPR}(g_{\phi_c^x}(y^\prime), g_{\phi_c^x}(z^\prime)) = 0$. 

Hence, we have $|g_{\phi_c^x}(N(x'))| = |N(x')|$, and by Lemma~\ref{lem:preserve_d}, $g_{\phi_c^x}(N(x')) \subset A_1^{x,c}$.

% By Lemma~\ref{lem:preserve_d2}, for all $\phi \in N(\phi_c^x)$, we find that $d_{r-SPR}(h_{x'}(\phi), h_{x'}(\phi_c^x)) = 1$, where $h_{x'}(\phi_c^x) = x$. For all $\phi_1, \phi_2 \in N(\phi_c^x)$ such that $d_{r-SPR}(\phi_1, \phi_2) \ge 0$, we have $d_{r-SPR}(h_{x'}(\phi_1), h_{x'}(\phi_2)) \ge 0$. Thus, we conclude that $h_{x'}(N(\phi_c^x)) \subset A_1^{x,c}$ and $|h_{x'}(N(\phi_c^x))| = |N(\phi_c^x)|$.

By Lemma~\ref{lem:preserve_d2}, for all $\phi_1, \phi_2 \in N(\phi_c^x)$ such that $d_{r-SPR}(\phi_1, \phi_2) = k$, it follows that $d_{r-SPR}(h_{x'}(\phi_1), h_{x'}(\phi_2)) = k$, where $k \in \mathbb{Z}_{\ge 0}$. Additionally, for every $\phi \in N(\phi_c^x)$, we observe that $d_{rSPR}(h_{x'}(\phi), h_{x'}(\phi_c^x)) = 1$, with $h_{x'}(\phi_c^x) = x$. Consequently, we deduce that $|h_{x'}(N(\phi_c^x))| = |N(\phi_c^x)|$ and $h_{x'}(N(\phi_c^x)) \subseteq A_1^{x,c}$.

Since $g_{\phi_c^x}(N(x')) \cap h_{x'}(N(\phi_c^x)) = \emptyset$, we obtain $|A_1^{x,c}| \geq |N(x')| + |N(\phi_c^x)|$.

Now, we show that $A_1^{x,c} \subset g_{\phi_c^x}(N(x') \cup h_{x'}(N(\phi_c^x)))$. Suppose there exists a $y \in A_1^{x,c}$ that is not included in $g_{\phi_c^x}(N(x')) \cup h_{x'}(N(\phi_c^x))$.

If $y \notin g_{\phi_c^x}(N(x'))$, then $y$ does not result from regrafting a node in edges outside of $\phi_c$. Additionally, if $y \notin h_{x'}(N(\phi_c^x))$, the regraft cannot involve an internal node of $\phi_c^x$ on an edge of $\phi_c^x$.

Since $y \in A_1^{x,c}$, there are two possible regrafting operations in $x$ to become $y$: either regrafting a subtree on edges inside of $\phi_c^x$ or a subtree of $\phi_c^x$ on edges outside of $\phi_c^x$. In both cases, $c$ would not be included in $y$, leading to a contradiction. Therefore, we conclude that $A_1^{x,c} \subset g_{\phi_c^x} (N(x^\prime)) \cup h_{x^\prime}(N(\phi_c^x))$ and we finish the proof. 
\end{proof}

Now we prove Lemma~\ref{lem:B_1^xc}.

\begin{proof}[Proof of Lemma~\ref{lem:B_1^xc}]

First, we will prove the case that $|B_1^{x,c}| = 0$. Suppose that we have the clades $c_1, c_2,\dots, c_k$, for a $k \in \{3, 4, \dots, n-3\}$ such that $c_i \cap c_j = \emptyset$, for all $i,j \in \{1,2,\dots, k\}$ and $\cup_{j=1}^k c_j = c$. 
 
Given that $|c| \leq n-3$, for every possible cardinality of $c$, we can construct a tree in which each element $c_j$, for $j \in {1,2,\dots,k}$, possesses a sibling that may either be a leaf (occurring when $|c| = n-k$) or a subtree. Consider such a tree $x$, which, by construction, belongs to $S_0(c)$. For $x$, achieving a tree in $S_1(c)$ via a single SPR operation is not feasible. For instance, regrafting $c_1$ onto the parent edge of $c_2$ results in a scenario where the least common ancestor of $c_1, c_2, \dots, c_k$ remains unchanged and is the same as that of the siblings of $c_3, c_4, \dots, c_k$. Consequently, the resulting tree from this SPR operation lacks the clade $c$.

%Hence for all $y \in S_1(c)$ we have $d_{r-SPR}(x,y) > 1$ and $|B_1^{x,c}| = 0$.

From the proof presented above, there exists a unique manner in which a tree $x \in S_0(c)$ can be connected to any tree $y \in S_1(c)$, specifically when $d_{rSPR}(x, y) = 1$. This situation occurs if, and only if, in $x$, there are two distinct clades $c_1$ and $c_2$, satisfying $c_1 \cap c_2 = \emptyset$ and $c_1 \cup c_2 = c$.

If $c_1$ and $c_2$ have other subtrees that share the same least common ancestor, then to form the clade $c$, we must regraft $c_1$ either onto an edge of $c_2$ or onto the edge connecting $c_2$ with its parent. This operation can be performed reciprocally. Consequently, there are $2|c_2| - 1$ possible edges for regrafting $c_1$, yielding $2|c_1| - 1$ distinct neighbors. By applying the same reasoning with $c_1$ and $c_2$ interchanged, we ascertain that $|B_1^{x,c}| = 2(|c_1| + |c_2|) - 2 = 2(|c| - 1)$.

Now, consider a tree $x \in S_0(c)$, where between $c_1$ and $c_2$, there exists a subtree $\phi_I$. We can perform the same regrafting operation described above or regraft the subtree $\phi_I$ onto edges that are not beneath the least common ancestor of $c_1$, $c_2$, and $\phi_I$. It is important to note that regrafting $\phi_I$ onto the edge above the parent of the clade ${c_1 \cup c_2 \cup \phi_I}$, or regrafting $c_1$ above the parent of $c_2$ (or vice versa), results in the same tree. This observation is crucial to avoid double-counting the same tree as a neighbour. Considering that any tree in $\TT_n$ possesses $2n-1$ edges and the clade ${c_1 \cup c_2 \cup \phi_I}$ has $2(|c_1| + |c_2| + |I| - 1)$ edges, we deduce that $|B_1^{x,c}| = 2n - 1 - 2(|c| + |I| - 1) + 2(|c| - 1) - 2 = 2(n - |I|) - 3$. Thus, we conclude our proof.
\end{proof}

Next, we obtain an interesting combinatorial result, which shows some regularity in the neighbourhoods in the SPR graph. 

\begin{lemma}
Consider $(X_k)_{k \ge 0}$ a Metropolis-Hastings random walk and $(Y_k)_{k \ge 0}$ the projected chain on $\bar{F}:=\{F_1, F_2, \dots, F_v\}$ the tree shape partition with $\Tilde{P}$ its induced transition probability matrix.Then for any $F_i, F_j \in F$, $t^i \in F_i$ and $t^j \in F_j$, such that $N(t^i) \cap F_j \neq \emptyset$ we have
\begin{equation*}
    \frac{|F_i|}{|F_j|} = \frac{|N(t^j) \cap F_i|}{|N(t^i) \cap F_j|} \,.
\end{equation*}
\end{lemma}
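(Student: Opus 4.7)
The proof plan is a straightforward double-counting argument on the bipartite subgraph of $G_n$ induced between $F_i$ and $F_j$, using the constancy result from Lemma~\ref{lem:F_j^xn=F_j^yn}.

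First, I would consider the number of edges in the rSPR-graph $G_n$ with one endpoint in $F_i$ and the other in $F_j$, and count this quantity in two ways. Since the rSPR distance is symmetric, the adjacency relation ``$y \in N(x)$'' in $G_n$ is symmetric, so every such edge is counted exactly once from each side.

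Counting from the $F_i$ side, each $x \in F_i$ contributes $|N(x) \cap F_j| = |F_j^{x}|$ edges. By Lemma~\ref{lem:F_j^xn=F_j^yn}, this quantity is the same for all $x \in F_i$, and in particular equals $|N(t^i) \cap F_j|$. Thus the total number of such edges is $|F_i| \cdot |N(t^i) \cap F_j|$. Counting from the $F_j$ side in exactly the same way and invoking Lemma~\ref{lem:F_j^xn=F_j^yn} again, the total is $|F_j| \cdot |N(t^j) \cap F_i|$.

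Equating the two counts gives $|F_i| \cdot |N(t^i) \cap F_j| = |F_j| \cdot |N(t^j) \cap F_i|$. The hypothesis $N(t^i) \cap F_j \neq \emptyset$, together with Lemma~\ref{lem:F_j^xn=F_j^yn} applied symmetrically (the same SPR moves witnessed from any $t^j \in F_j$ yield a neighbour in $F_i$), ensures $N(t^j) \cap F_i$ is also nonempty, so we can safely divide and obtain the claimed identity. There is no substantive obstacle here beyond invoking Lemma~\ref{lem:F_j^xn=F_j^yn} correctly on both sides; the real content of the lemma is already contained in the shape-preserving transferability of SPR moves established there.
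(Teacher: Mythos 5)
Your proof is correct, but it takes a genuinely different route from the paper's. The paper proves this identity probabilistically: it first notes that the Metropolis--Hastings kernel is symmetric, $p(t,\hat t)=p(\hat t,t)$, because its stationary distribution is uniform; it then uses Theorem~\ref{teo:shape_lump_MH} and Lemma~\ref{lem:F_j^xn=F_j^yn} to write $\tilde p(F_i,F_j)=|N(t^i)\cap F_j|\,p(t^i,t^j)$, and finally invokes the detailed balance equation for the lumped chain, $\pi_Y(F_i)\tilde p(F_i,F_j)=\pi_Y(F_j)\tilde p(F_j,F_i)$ with $\pi_Y(F_i)=|F_i|/|\TT_n|$, to cancel the common factor $p(t^i,t^j)$ and obtain the ratio identity. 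Your argument instead double-counts the edges of the bipartite subgraph of $G_n$ between $F_i$ and $F_j$, using only the symmetry of the rSPR adjacency relation and Lemma~\ref{lem:F_j^xn=F_j^yn} on each side; equating $|F_i|\cdot|N(t^i)\cap F_j|=|F_j|\cdot|N(t^j)\cap F_i|$ and dividing (the nonemptiness of $N(t^j)\cap F_i$ already follows from this product identity, without needing your separate symmetry remark) gives the claim. Your route is more elementary and exposes the fact that the statement is purely combinatorial --- it is a property of the SPR graph and the shape partition alone, independent of any Markov chain --- whereas the paper's route shows how the identity sits inside the reversibility structure of the original and lumped chains, which is the lens the surrounding section cares about. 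Both are valid; yours arguably better isolates where the content actually lives, namely in Lemma~\ref{lem:F_j^xn=F_j^yn}.
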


\begin{proof}
For all $t, \hat{t} \in \TT_n$ by the detailed balanced equation we have
\begin{equation}\label{eq: bal_eq_1}
\pi(t) p(t, \hat{t}) = \frac{1}{|\TT_n|} p(t, \hat{t}) = \frac{1}{|\TT_n|} p(\hat{t}, t) = \pi(\hat{t}) p(\hat{t}, t) \,.
\end{equation}

Now for any $F_i , F_j \in F$, $t^i \in F_i$ and $t^j \in F_j$, we have the following
\begin{equation}\label{eq:prob_lump}
\Tilde{p}(F_i, F_j) = \sum_{z \in N(t^i) \cap F_j} p(t^i, z) = |N(t^i) \cap F_j|p(t^i, t^j) \,.
\end{equation}
In the first equality in~\eqref{eq:prob_lump} we used Theorem~\ref{teo:shape_lump_MH} and the second equality by Lemma~\ref{lem:F_j^xn=F_j^yn}. 

Thus 
\begin{equation}\label{eq: pi_y}
\begin{split}
& \pi_Y (F_i) \Tilde{p}(F_i, F_j) = \frac{|F_i|}{|\TT_n|} \times |N(t^i) \cap F_j|p(t^i, t^j) \quad \text{and}
\\
& \pi_Y(F_k) \Tilde{p}(F_j, F_i) = \frac{|F_j|}{|\TT_n|} \times |N(t^j) \cap F_i|p(t^j, t^i) \,.
\end{split}    
\end{equation}

Hence, we have the desired result by the detailed balanced equation,~\eqref{eq: bal_eq_1} and~\eqref{eq: pi_y}. 

% \begin{equation}\label{eq: pi_MH_lump}
% \begin{split}
%  \pi_Y (F_i) \Tilde{p}(F_i, F_j)
%  & = \frac{|F_i|}{|\TT_n|} \Tilde{p}(F_i, F_j) =  \frac{|F_i|}{|\TT_n|} |F_j|p(t^i, t^j) 
%  \\
%  & = |F_i||F_j|\frac{p(t^j, t^i)}{|\TT_n|} = \frac{|F_k|}{|\TT_n|}\Tilde{p}(F_j, F_i) = \pi(F_k) \Tilde{p}(F_j, F_i) \,. 
%  \end{split}
% \end{equation}

\end{proof}

\begin{lemma}\label{lem:aux1}
Let $n$ and $|c|$ be positive integers such that $n \ge 9$ and $3 \le |c| \le \lfloor n^{1/2} \rfloor$. Then we have
\begin{equation*}
   \frac{-8|c|^2 + 18|c|n - 24|c| -18n + 28}{6(3n^2 -2|c|^2 +2|c|n-15n+16)} \ge \frac{36n -116}{6(3n^2 - 9n -2)} \,. 
\end{equation*}
\end{lemma}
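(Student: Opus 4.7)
My plan starts from a key observation that streamlines the whole argument: the right-hand side is exactly the left-hand side evaluated at $|c|=3$. Plugging $|c|=3$ into the numerator $-8|c|^2 + 18|c|n - 24|c| - 18n + 28$ gives $-72 + 54n - 72 - 18n + 28 = 36n - 116$, and into the denominator $3n^2 - 2|c|^2 + 2|c|n - 15n + 16$ gives $3n^2 - 9n - 2$. Writing $k := |c|$, $N(k) := -8k^2 + 18kn - 24k - 18n + 28$, and $D(k) := 3n^2 - 2k^2 + 2kn - 15n + 16$, the claim is simply $N(k)/D(k) \ge N(3)/D(3)$.

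I would first check positivity so that cross-multiplication is valid for $n \ge 9$ and $3 \le k \le \sqrt{n}$: the denominators are clearly positive, $N(3) = 36n - 116 > 0$ for $n \ge 4$, and for $N(k)$ one uses $k^2 \le n$ together with $k \ge 3$ to get $-8k^2 + 18kn \ge 46n$, whence $N(k) \ge 28n - 24k + 28 > 0$. Under positivity, the target becomes $N(k)D(3) - N(3)D(k) \ge 0$.

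The central step is a factorisation trick: since this expression vanishes at $k=3$, it is divisible by $(k-3)$. Direct computation yields the exact factorisations
\begin{equation*}
N(k) - N(3) = (k-3)(18n - 8k - 48), \qquad D(k) - D(3) = 2(k-3)(n - k - 3),
\end{equation*}
so
\begin{equation*}
N(k)\,D(3) - N(3)\,D(k) = (k-3)\bigl[(18n - 8k - 48)\,D(3) - 2(n-k-3)\,N(3)\bigr].
\end{equation*}
Expanding the bracket (elementary, but requires some bookkeeping) collapses remarkably to
\begin{equation*}
54 n^3 - 378 n^2 + 844 n - 600 - 24\,k\,(n-3)^2,
\end{equation*}
with the $k$-dependent part combining into the clean factor $(n-3)^2$.

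It remains to show this quantity is non-negative. Since $n \ge 9$ implies $\sqrt{n} \le n/3$, the hypothesis $k \le \lfloor n^{1/2}\rfloor$ gives $24\,k\,(n-3)^2 \le 8n(n-3)^2 = 8n^3 - 48n^2 + 72n$; it therefore suffices to verify the univariate inequality $46 n^3 - 330 n^2 + 772 n - 600 \ge 0$ for $n \ge 9$. The derivative $138n^2 - 660n + 772$ has both roots below $3$, so the cubic is strictly increasing on $[9,\infty)$, and at $n=9$ it evaluates to $13152 > 0$, closing the argument. The only real obstacle is the bookkeeping in the polynomial expansion of the bracket; once the $(k-3)$ factor is pulled out, the inequality reduces to a one-variable cubic check.
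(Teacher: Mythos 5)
Your proof is correct, and it takes a genuinely different route from the paper's. The paper fixes $n$ and shows that $f(|c|,n)$ (the left-hand side) is non-decreasing in $|c|$ by comparing $f(|c|+1,n)$ with $f(|c|,n)$: both numerator and denominator increase by positive amounts with the numerator's increment dominating, so a mediant-type inequality ($a/b \le (a+c)/(b+d)$ when $b>a>0$ and $c>d>0$) yields monotonicity, and the claim follows since the right-hand side is $f(3,n)$. You instead clear denominators and exploit the vanishing at $|c|=3$ to factor $(k-3)$ out of $N(k)D(3)-N(3)D(k)$, which collapses the bracket to $54n^3-378n^2+844n-600-24k(n-3)^2$; the bound $k\le\sqrt{n}\le n/3$ then reduces everything to the positivity of a single cubic in $n$ on $[9,\infty)$. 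I verified your factorisations of $N(k)-N(3)$ and $D(k)-D(3)$, the expanded bracket, and the value $13152$ at $n=9$; all are exact. The trade-off: the paper's induction-on-$|c|$ argument is shorter but hinges on verifying the side conditions of the mediant inequality (in particular that the denominator exceeds the numerator), which it treats somewhat tersely, and it contains a minor arithmetic slip in the numerator increment that happens not to affect the conclusion; your computation is heavier on bookkeeping but fully explicit, self-contained, and makes transparent exactly where the hypotheses $n\ge 9$ and $|c|\le\lfloor n^{1/2}\rfloor$ enter (via $\sqrt{n}\le n/3$). Both are valid proofs of the lemma.
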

\begin{proof}
Let us first denote 
\[
f(|c|,n) := \frac{-8|c|^2 + 18|c|n - 24|c| -18n + 28}{6(3n^2 -2|c|^2 +2|c|n-15n+16)}\,.
\]
Thus, we aim to prove that $f(|c|, n) \ge f(3,n)$ for every $n \ge 9$ and $3 \le |c| \le \lfloor n^{1/2} \rfloor$, where $n, |c| \in \mathbb{N}$. To complete our proof, it suffices to show that $f(|c|,n)$ is non-decreasing with respect to $|c|$ and for every $n \ge 9$. 

Note that we can rewrite $f(|c|, n)$ in the following way
\begin{equation*}
  f(|c|, n) = \frac{2|c|(9n - 12) -8|c|^2 + 28}{6(2|c|n - 2|c|^2 + (3n^2 - 15n +16))} \,.   
\end{equation*}

Then we have
\begin{equation}\label{eq:ap_lem1}
\begin{split}
& f(|c|+1, n) = \frac{2(|c|+1) - 8(|c|+1)^2 + 28}{6(2(|c|+1)n - 2(|c|+1)^2 + (3n^2 - 15n +16))}
\\
& = \frac{2|c|(9n - 12) -8|c|^2 + 28 + 18n - 16|c| - 16}{6(2|c|n - 2|c|^2 + (3n^2 - 15n +16) + 2n - 4|c| -2)} \ge f(|c|, n)  \,.
\end{split}
\end{equation}
It is important to note that $18n - 16|c| - 16 \ge 2n - 4|c| - 2 > 0$ for every $n \ge 9$ and $3 \le |c| \le \lfloor n^{1/2} \rfloor$. Given that $a/b \le (a+c)/(b+d)$ when $b > a > 0$ and $c > d > 0$, we obtain the final inequality in~\eqref{eq:ap_lem1}, thereby completing the proof.   

\end{proof}

%\input{bound_for_rho}

% \begin{figure}[!ht]
% \centering
% \includegraphics[width=\textwidth, height = 15cm]{figures/}
% \caption{\textbf{}.
% }
% \label{fig:}
% \end{figure}
%%
% \begin{figure}
% \hfill
% \subfigure[Title A]{\includegraphics[width=5cm]{img1}}
% \hfill
% \subfigure[Title B]{\includegraphics[width=5cm]{img2}}
% \hfill
% \caption{\textbf{}.
% }
% \label{fig:}
% \end{figure}
\end{document}